\definecolor{red}{rgb}{1,0,0}
\definecolor{blue}{rgb}{0,0,.7}
\definecolor{green}{rgb}{0,.6,0}
\definecolor{purp}{rgb}{.5,0,.5}
\numberwithin{figure}{section}   
\newtheorem{thm}{Theorem}[section]
\newtheorem{cor}[thm]{Corollary}
\newtheorem{lem}[thm]{Lemma}
\newtheorem{prop}[thm]{Proposition}
\newtheorem{obs}[thm]{Observation}
\theoremstyle{definition}
\newtheorem{rem}[thm]{Remark}
\theoremstyle{definition}
\newtheorem{defn}[thm]{Definition}
\theoremstyle{definition}
\newtheorem{ex}[thm]{Example}
\newcommand{\diam}{\operatorname{diam}}
\newcommand{\dist}{\operatorname{dist}}
\newcommand{\ol}{\overline}
\newcommand{\bit}{\begin{itemize}}
\newcommand{\eit}{\end{itemize}}
\newcommand{\ben}{\begin{enumerate}}
\newcommand{\een}{\end{enumerate}}
\newcommand{\beq}{\begin{equation}}
\newcommand{\eeq}{\end{equation}}
\newcommand{\bea}{\begin{eqnarray*}} 
\newcommand{\eea}{\end{eqnarray*}}
\newcommand{\bpf}{\begin{proof}}
\newcommand{\epf}{\end{proof}\ms}
\newcommand{\bmt}{\begin{bmatrix}}
\newcommand{\emt}{\end{bmatrix}}
\newcommand{\ms}{\medskip}
\newcommand{\noi}{\noindent}
\newcommand{\Z}{\operatorname{Z}}
\newcommand{\Zp}{\operatorname{Z_+}}
\newcommand{\clf}{\mathscr{F}}
\newcommand{\pd}{\gamma_P} 
\newcommand{\F}{{\mathcal F}}
\newcommand{\zbar}{\ol\Z}
\newcommand{\xbar}{\ol X}
\newcommand{\xtar}{\mathscr{X}^{{\tiny\rm TAR}}}
\newcommand{\ztar}{\mathscr{Z}^{{\tiny\rm TAR}}}
\newcommand{\dtar}{\mathscr{D}^{{\tiny\rm TAR}}}
\newcommand{\ulzo}{\underline{z_0}}
\newcommand{\ulxo}{\underline{x_0}}
\newcommand{\xxo}{x_0}
\newcommand{\zzo}{z_0}
\newcommand{\psdztar}{\mathscr{Z}_+^{{\tiny\rm TAR}}}
\title{Isomorphisms  and  properties of TAR reconfiguration graphs for zero forcing and other $X$-set parameters}
\author{Novi H. Bong\thanks{Department of Mathematical Sciences, University of Delaware, Newark, DE 19716, USA (nhbong@udel.edu)}\and Joshua Carlson\thanks{Department of Mathematics and Computer Science, Drake University, Des Moines, IA 50311, USA (joshua.carlson@drake.edu)}\and Bryan Curtis\thanks{Department of Mathematics, Iowa State University,
Ames, IA 50011, USA (bcurtis1@iastate.edu)}\and Ruth Haas\thanks{Department of Mathematics, University of Hawaii at
M\=anoa, Honolulu, HI 96822, USA (rhaas@hawaii.edu)}\and Leslie Hogben\thanks{Department of Mathematics, Iowa State University,
Ames, IA 50011, USA and American Institute of Mathematics, 600 E. Brokaw Road, San Jos\'e, CA 95112, USA
(hogben@aimath.org).}}
\begin{document}
\maketitle \vspace{-20pt}

\begin{abstract} 
An $X$-TAR (token addition/removal) reconfiguration graph has as its vertices sets that satisfy some property $X$, with an edge between two sets  if one is obtained from the other by adding or removing one element. This paper considers the $X$-TAR graph for $X-$ sets  of vertices 
of a base graph $G$ where the $X$-sets of $G$ must satisfy certain conditions. Dominating sets, power dominating sets, zero forcing sets, and positive  semidefinite zero forcing sets are all examples of $X$-sets. 
  For graphs $G$ and $G'$ with no isolated vertices, it is shown that $G$ and $G'$ have isomorphic $X$-TAR reconfiguration graphs if and only if there is a  relabeling of the vertices of $G'$ such that $G$ and $G'$ have exactly the same $X$-sets.  The concept of an $X$-irrelevant vertex is introduced to facilitate analysis of $X$-TAR graph isomorphisms.
  Furthermore, results related to the connectedness of the zero forcing TAR graph are given. We present  families of graphs that   exceed known lower bounds for connectedness parameters.  
  \end{abstract}

\noi {\bf Keywords} reconfiguration;  token addition and removal; TAR; vertex; $X$-set;  isomorphism; zero forcing

\noi{\bf AMS subject classification} 68R10, 05C50, 05C57, 05C60, 05C69 


%
%

\section{Introduction}

The study of reconfiguration examines relationships among solutions to a problem. These solutions are modeled as  vertices in a graph called  a \textit{reconfiguration graph}.  A \textit{reconfiguration rule} describes the adjacency relationship in  a reconfiguration graph. When the solutions are sets, the  \emph{token addition or removal rule}  says that two sets $S$ and $S'$ are adjacent in the \emph{TAR reconfiguration graph} if $S'$ can be obtained from $S$ by adding or removing one element. For our purposes, solutions are sets of vertices of a graph $G$ defined by a property, such as being a dominating set, power dominating set, or zero forcing set of $G$, although reconfiguration is studied more broadly. The graph $G$ is called the \emph{base graph}. 
 In \cite{PD-recon}, Bjorkman et al.~established  various properties of a TAR reconfiguration graph in a universal framework for parameters that we call $X$-set parameters (see Definition \ref{X-set-param}), and we call the associated TAR reconfiguration graphs $X$-TAR graphs.    Examples of $X$-set parameters include the domination number, the power domination number,   the zero forcing number, and the positive semidefinite zero forcing number (definitions of these parameters are presented later in this introduction). 

  It is established in Theorems \ref{isomorph-isometry-u} and \ref{isomorph-same-zfs} that if  $G$ and $G'$ are graphs with no isolated vertices that have isomorphic $X$-TAR reconfiguration graphs, then there is a relabeling of the vertices of $G'$ such that $G$ and $G'$ have exactly the same $X$-sets. To prove Theorem \ref{isomorph-same-zfs} we introduce the concept of a set of irrelevant vertices (or an $X$-irrelevant set). 
 In Section \ref{ss:irrelevant}, $X$-irrelevant sets are used to  characterize the automorphism group of an $X$-TAR graph.  There we also discuss the existence (or nonexistence) of nonempty $X$-irrelevant sets when $X$ is the domination number, power domination number, zero forcing  number,  or  positive semidefinite zero forcing  number.  The study of which $X$-TAR graphs are unique (up to isomorphism of the base graph and with isolated vertices prohibited) is  parameter-specific, and we examine this question for zero forcing TAR graphs in Section \ref{ss:unique}.

 Connectedness, that is, whether any one feasible solution can be transformed into any other by the reconfiguration rule, is a fundamental question in the study of reconfiguration. Connectedness properties seem to be very parameter-specific.  In Section \ref{s:ZTAR} we present results related to connectedness for  zero forcing TAR reconfiguration graphs.  This  includes exhibiting families of graphs that exceed known bounds for parameters describing connectedness of the zero forcing TAR reconfiguration graphs.

 All graphs are simple, undirected, finite, and have nonempty vertex sets.  Let $G=(V(G),E(G))$ be a graph. The \emph{neighborhood} of a vertex $v$ of $G$ is the set of all vertices adjacent to $v$ and is denoted by $N_G(v)$. 
The \emph{closed neighborhood} of $v$ is $N_G[v]=N_G(v)\cup\{v\}$.  If $S$ is a set of vertices of $G$, then $N_G[S]=\cup_{x\in S} N_G[x]$.
 Let $G$ be a graph and color all the vertices of $G$ blue or white. Zero forcing is a process that changes the color of white vertices to blue by applying the \emph{standard color change rule}, i.e.,  a blue vertex $v$  can force a white vertex $w$ to change color to  blue if $w$ is the only white vertex in the neighborhood of $v$. A \textit{zero forcing set} is a set $S\subseteq V(G)$ that  can  result in all the vertices of $G$  turning blue by repeated application of the standard color change rule starting with exactly the vertices in $S$ blue. 
 In the context of TAR reconfiguration, ``solutions'' to zero forcing on a graph $G$ are zero forcing sets.
 The \emph{zero forcing number} of $G$, denoted by $\Z(G)$, is the minimum cardinality of a zero forcing set. The zero forcing TAR reconfiguration graph allows us to consider  all the zero forcing sets (of various sizes) and the relationships among them for a particular  graph.  The study of zero forcing reconfiguration was initiated in \cite{GHH}, where  the token exchange  zero forcing reconfiguration graph was studied. Note that for the zero forcing token exchange reconfiguration graph, only minimum zero forcing sets are used as vertices and two such sets are adjacent if one can be obtained from the other by exchanging one vertex.

Positive semidefinite zero forcing is a variant of zero forcing defined by the \emph{PSD color change rule}: Let $S$ be the set of blue vertices and let $W_1,\dots, W_k$ be the sets of vertices of the $k\ge 1$ components of $G- S$ (the graph obtained from  $G$ by deleting the vertices in  $S$).  If $u\in  S$, $w\in W_i$, and $w$ is the only white neighbor of  $u$ in $G[W_i\cup  S]$, then change the color of $w$ to blue (where $G[U]$ denotes the subgraph of $G$ induced by the vertices of $U$).  
A \emph{PSD zero forcing set} is a set $S\subseteq V(G)$ that  can  result in all the vertices of $G$  turning blue by repeated application of the PSD color change rule starting with exactly the vertices in $S$ blue. 
The \emph{PSD zero forcing number} of $G$, denoted by $\Zp(G)$, is the minimum cardinality of a  {PSD zero forcing set}. More information on zero forcing and positive semidefinite zero forcing can be found in \cite{HLA2}.

 A set $S$ of vertices of a graph $G$ is a \emph{dominating set} of $G$ if $N_G[S]=V(G)$ and the \emph{domination number} $\gamma(G)$ of $G$ is the minimum  cardinality of a dominating set of $G$.  A set $S$ of vertices of a graph $G$ is a \emph{power dominating set} of $G$ if $N_G[S]$ is a zero forcing set of $G$, and the \emph{power domination number} $\pd(G)$ of $G$ is the minimum cardinality of a power dominating set of $G$.  The study of $\gamma$-TAR reconfiguration was introduced in \cite{HS14} and $\pd$-TAR reconfiguration was introduced in \cite{PD-recon}.

In the next two sections we present the definitions of an $X$-set  parameter, its TAR graph, and related parameters, followed by basic results concerning the zero forcing TAR graph, including determinations of the zero forcing TAR graphs for several graph families.  

\subsection{$X$-set parameter definitions}\label{ss:univ}

The next  definition is adapted from \cite{PD-recon} (the term {\em $X$-set parameter} is new but the list of conditions in Definition \ref{X-set-param} is taken from \cite[Definition 2.1]{PD-recon}).

\begin{defn}\label{X-set-param}
 An {\em $X$-set parameter} is a graph parameter $X(G)$ defined to be the minimum cardinality of an $X$-set of $G$, where the $X$-sets of $G$  are defined by a given property and satisfy the following conditions:  
\ben[(1)]
\item If $S$ is an $X$-set of  $G$ and  $S\subseteq S'$, then $S'$ is an $X$-set of $G$.
\item  The empty set is not an $X$-set of any graph.
\item An $X$-set of a disconnected graph is the union of an $X$-set of each component. 
\item If $G$ has no isolated vertices, then every set of $|V(G)|-1$ vertices is an $X$-set. 
\een  
\end{defn}

Note that $\Z(G), \Zp(G), \gamma(G),$ and $\pd(G)$ are all $X$-set parameters.
 Let  $X$ be an $X$-set parameter. The following definitions are adapted from \cite{PD-recon}.  The {\em $X$ token addition and removal  reconfiguration graph ($X$-TAR graph)}  of a  base graph  $G$, denoted by  $\xtar(G)$, has  the set of all  $X$-sets of $G$ as the set of vertices, and there is an edge between  two vertices  $S_1$ and $S_2$ of $\xtar(G)$ if and only if  $|S_1 \ominus S_2|=1$ where $A
\ominus B=(A\cup B)\setminus (A\cap B)$ denotes the  \textit{symmetric difference} of sets $A$ and $B$.  The {\em $k$-token addition and removal  reconfiguration graph for $X$} of $G$, denoted by  $\xtar_k(G)$, is the subgraph of $\xtar(G)$ induced by the set of  all  $X$-sets of cardinality at most $k$.
The maximum cardinality of a minimal $X$-set of  $G$ is the \emph{upper $X$-number} of $G$ and is denoted by $\xbar(G)$.

When the vertices of a TAR graph represent  a collection of solutions to a problem and an edge represents the application of a transformation rule from one solution to another, then that TAR graph is connected if and only  if   within the collection any solution can be transformed to any other  by repeated application of the transformation rule.   Observe that $\xtar(G)$ is connected for any graph $G$.  Thus determining  for which $k$ the  $k$-TAR is connected is a main theme of  the study of TAR graph reconfiguration  and connectedness is also a theme of other types of reconfiguration.
The least $k$ such that $\xtar_k(G)$ is connected is denoted by $\ulxo(G)$.  
The least $k$ such that $\xtar_i(G)$ is connected for $k\le i \le |V(G)|$ is denoted by $\xxo(G)$.

The disjoint union of graphs $G$ and $H$ is denoted by $G\sqcup H$ and $rK_1$ denotes $r$ isolated vertices.   As noted in \cite{PD-recon}, if   $G$ is a graph with has no isolated vertices, $G'=G\sqcup rK_1$, and $X$ is an $X$-set parameter, then   $X(G')=X(G)+r$, $\xtar_{k+r}(G')\cong\xtar_k(G)$, and $\xtar(G')\cong\xtar(G)$. 
Thus it suffices to study $X$-TAR reconfiguration   graphs with no isolated vertices. 

The \emph{order} of a graph $G$  is the cardinality of $V(G)$.  The \textit{maximum degree} and \textit{minimum degree} of  $G$ are denoted $\Delta(G)$ and $\delta(G)$ respectively.  It is well known (and easy to see) that $\delta(G)\le \Z(G)$. 
Let $G$ be a graph of order $n\ge 2$ with no isolated vertices.  As noted in \cite{PD-recon}, $\Delta(\xtar(G))=n$.  Observe that an $X$-set $S\ne V(G)$ of $G$ has $\deg_{\xtar_{|S|+1}(G)}(S)=n$ if and only if  $S\setminus\{x\}$ is an $X$-set for every $x\in S$. 

\subsection{Zero forcing TAR graphs}\label{ss:Z-TAR}
Next we implement  definitions in Section \ref{ss:univ} for zero forcing.

\begin{defn}\label{d:ztar}
 The \emph{zero forcing token addition and removal ($\Z$-TAR) graph}     of a   base graph $G$, denoted by  $\ztar(G)$, has  the set of all   zero forcing sets of $G$ as the set of vertices, and there is an edge between  two vertices  $S_1$ and $S_2$ of $\ztar(G)$ if and only if  $|S_1 \ominus S_2|=1$.  The {\em  zero forcing $k$-token addition and removal  reconfiguration graph} of $G$, denoted by  $\ztar_k(G)$, is the subgraph of $\ztar(G)$ induced by the set of  all  zero forcing sets of cardinality at most $k$.
\end{defn}

\begin{defn}\label{d:zzo}
The maximum cardinality of a minimal zero forcing set of a graph $G$ is the \emph{upper zero forcing number} of $G$ and is denoted by $\zbar(G)$. The least $k$ such that $\ztar_k(G)$ is connected is denoted by $\ulzo(G)$.  The least $k$ such that $\ztar_i(G)$ is connected for each $i=k,\dots, |V(G)|$ 
is denoted by $\zzo(G)$.
\end{defn}

It was shown in \cite{smallparam} that 
 a graph with no isolated vertices has more than one minimum zero forcing set and no vertex is in every minimum zero forcing set. 
 Let  $G$ be a graph that has no isolated vertices.  Since a  minimal zero forcing set of cardinality $k$ has no neighbors in $\ztar_{k}(G)$,
$\ztar_{k}(G)$ is not connected whenever there is a minimal zero forcing set of cardinality $k$.  In particular, $\ztar_{\Z(G)}(G)$ and $\ztar_{\zbar(G)}(G)$ are not connected.  Brimkov and Carlson constructed an example of an infinite family of graphs $G_n$ with $\Z(G_n)=5$ and $\zbar(G_n)=n-2$ in \cite{BC22}.

Next we determine the zero forcing TAR reconfiguration graphs and related parameters for several families of graphs.  The complete graph is denoted by $K_n$ and $K_{1,r}$ denotes  the star with $r$ leaves (where a \emph{leaf} is a vertex of degree one); the \emph{center vertex} of $K_{1,r}$ is the vertex of degree $r$.

\begin{prop}\label{Kn}
For  $n\ge 2$, $\ztar(K_n) = K_{1,n}$.  Furthermore, $\zbar(K_n)=\Z(K_n)=n-1$ and $\zzo(K_n)=\ulzo(K_n)=n$.
\end{prop}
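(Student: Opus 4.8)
The plan is to first pin down exactly which subsets of $V(K_n)$ are zero forcing sets, since every assertion in the statement follows mechanically from this. First I would observe that in $K_n$ every vertex is adjacent to every other, so a blue vertex $v$ can force under the standard color change rule precisely when $v$ has exactly one white neighbor, i.e.\ precisely when exactly one vertex of $K_n$ is white. Hence if $|S| = n-1$, the unique white vertex is the only white neighbor of every blue vertex and is immediately forced, so $S$ is a zero forcing set; and of course $S = V(K_n)$ is a zero forcing set. Conversely, if $|S| \le n-2$ then at least two vertices are white, and every blue vertex has at least two white neighbors, so no force is possible and the coloring is stalled. Therefore the zero forcing sets of $K_n$ are exactly $V(K_n)$ together with the $n$ sets of cardinality $n-1$, and $\Z(K_n) = n-1$.

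Next I would read off the remaining quantities from this list of zero forcing sets. Each size-$(n-1)$ set is minimal (deleting any vertex yields a set of size $n-2$, which is not a zero forcing set), while $V(K_n)$ is not minimal; so every minimal zero forcing set has cardinality $n-1$, giving $\zbar(K_n) = n-1 = \Z(K_n)$. For the structure of $\ztar(K_n)$, note its $n+1$ vertices are $V(K_n)$ and the sets $V(K_n)\setminus\{v\}$. Since $V(K_n)\ominus(V(K_n)\setminus\{v\}) = \{v\}$, the vertex $V(K_n)$ is adjacent to all $n$ of the size-$(n-1)$ sets; and for $u\neq w$ one has $(V(K_n)\setminus\{u\})\ominus(V(K_n)\setminus\{w\}) = \{u,w\}$ of size $2$, so the size-$(n-1)$ sets are pairwise nonadjacent. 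Thus $\ztar(K_n)$ is the star with center $V(K_n)$ and $n$ leaves, i.e.\ $\ztar(K_n) = K_{1,n}$.

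Finally I would compute the two connectedness parameters. The subgraph $\ztar_{n-1}(K_n)$ is induced by the zero forcing sets of cardinality at most $n-1$, which are exactly the $n$ size-$(n-1)$ sets; as just shown these form an independent set, so for $n\ge 2$ the graph $\ztar_{n-1}(K_n)$ is disconnected. On the other hand $\ztar_n(K_n) = \ztar(K_n) = K_{1,n}$ is connected. Hence the least $k$ for which $\ztar_k(K_n)$ is connected is $k=n$, giving $\ulzo(K_n) = n$, and since $\ztar_n(K_n)$ is connected while $\ztar_{n-1}(K_n)$ is not, the least $k$ making $\ztar_i(K_n)$ connected for all $i = k,\dots,n$ is also $n$, giving $\zzo(K_n) = n$.

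I would flag that there is no genuinely hard step here: the whole proposition is driven by the elementary characterization of the zero forcing sets of $K_n$ in the first paragraph, and the only points requiring minor care are checking the symmetric-difference computations so that the star structure (rather than some larger graph) is obtained, and confirming that the hypothesis $n\ge 2$ is exactly what forces $\ztar_{n-1}(K_n)$ to be disconnected.
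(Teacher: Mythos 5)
Your proof is correct and follows essentially the same route as the paper: characterize the zero forcing sets of $K_n$ as exactly the subsets with at least $n-1$ vertices, then read off $\Z$, $\zbar$, the star structure of the TAR graph, and the connectedness parameters. The only difference is that you verify from first principles what the paper dismisses as well known (the characterization of the zero forcing sets, and the disconnectedness of $\ztar_{\Z(G)}(G)$), which is fine.
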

\begin{proof} It is well known (and immediate from $\delta(K_n)=n-1$) that $\Z(K_n)=n-1$ and the zero forcing sets of $K_n$ consist of all subsets of vertices of $K_n$ that contain at least $n-1$ vertices. Thus  $\ztar(K_n)=K_{1,n}$.   Since  $\ztar_{\Z(G)}(G)$ is disconnected for any  connected graph that has order at least two, $\zzo(K_n)=\ulzo(K_n)=n$.
\end{proof}

If $G_1$ and $G_2$ are disjoint, the \emph{Cartesian product} of $G_1$ and $G_2$, denoted by $G_1\square G_2$, is the graph with $V(G\square G')=V(G_1)\times V(G_2)$ such that $(v_1,v_2)$ and $(u_1,u_2)$ are adjacent if and only if 
 (1) $v_1=u_1$ and $v_2u_2\in E(G_2)$, or
(2)  $v_2=u_2$ and $v_1u_1\in E(G_1)$.
Figure \ref{ztarK13} illustrates the next result.

\begin{figure}[h!]
    \centering
    		\begin{tikzpicture}[scale=.7,every node/.style={draw,shape=circle,outer sep=2pt,inner sep=1pt,minimum
			size=.2cm}]
		
		\node[fill=black]  (00) at (1,1) {};
		\node[fill=black]  (01) at (-3,-0.5) {};
		\node[fill=black]  (02) at (-1,-0.5) {};
		\node[fill=black]  (03) at (3,-0.5) {};
		\node[fill=black]  (04) at (-3,1) {};
		\node[fill=black]  (05) at (-1,1) {};
		\node[fill=black]  (06) at (3,1) {};
		\node[fill=black]  (07) at (1,2.5) {};
		
		\node[draw=none] at (1, 1.5){$\small{\{1,2,3\}}$};
		\node[draw=none] at (-3,-1){$\small{\{1,2\}}$};
		\node[draw=none] at (-1,-1){$\small{\{2,3\}}$};
		\node[draw=none] at (3,-1){$\small{\{1,3\}}$};
		
		\node[draw=none] at (-3, 1.5){$\small{\{0,1,2\}}$};
		\node[draw=none] at (-1, 1.5){$\small{\{0,2,3\}}$};
		\node[draw=none] at (3, 1.5){$\small{\{0,1,3\}}$};
		\node[draw=none] at (1, 3){$\small{\{0,1,2,3\}}$};

		\draw[thick](00)--(03)--(06)--(07)--(04)--(01)--(00)--(02)--(05)--(07)--(00);
		\end{tikzpicture}
    \caption{$\ztar(K_{1,3})$}
    \label{ztarK13}
\end{figure}

\begin{prop}
Let $r\geq 2$. Then $\ztar(K_{1,r})=K_{1,r}\square K_2$.  Furthermore, $\zbar(K_{1,r})=\Z(K_{1,r})=r-1$ and  $\zzo(K_{1,r})=\ulzo(K_{1,r})=r$.
\end{prop}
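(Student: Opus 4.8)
The plan is to directly determine all the zero forcing sets of $K_{1,r}$, build the $\Z$-TAR graph from them, and then identify it with $K_{1,r}\square K_2$ via an explicit isomorphism. First I would establish the zero forcing sets. Label the center vertex $c$ and the leaves $\ell_1,\dots,\ell_r$. Since $\delta(K_{1,r})=1$ gives only a weak bound, I would argue directly: a leaf $\ell_i$ can force $c$ only if $\ell_i$ is blue, and $c$ can force a leaf only if it is the unique white neighbor, which requires all other leaves to be blue. From this one sees that a set $S$ is zero forcing if and only if it contains all but at most one leaf. Concretely, the zero forcing sets are exactly the sets containing $\{\ell_1,\dots,\ell_r\}$ possibly minus one leaf, together with or without $c$. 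This yields $\Z(K_{1,r})=r-1$ (take all leaves but one), and the maximal chains show every minimal zero forcing set has cardinality $r-1$, so $\zbar(K_{1,r})=r-1$ as well.

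Next I would enumerate the vertices of $\ztar(K_{1,r})$. A zero forcing set is determined by two independent binary choices: whether $c\in S$, and which (if any) leaf is missing. The ``missing leaf'' data ranges over $\{0,1,\dots,r\}$ where $0$ denotes ``no leaf missing'' (all leaves present) and $i$ denotes ``$\ell_i$ missing''; there are $r+1$ such states. Crossed with the two choices for $c$, this gives $2(r+1)$ vertices, matching $|V(K_{1,r}\square K_2)|=(r+1)\cdot 2$. I would then describe adjacency. Adding or removing $c$ flips the $c$-coordinate while fixing the leaf data, giving the $K_2$-direction edges. Within a fixed $c$-coordinate, adding or removing a single leaf changes the missing-leaf data between the ``none missing'' state and a ``one missing'' state: removing $\ell_i$ from the all-leaves configuration sends state $0$ to state $i$, and adding $\ell_i$ back reverses this. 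Crucially, one cannot pass directly between two distinct ``one leaf missing'' states by a single token move (that would change two elements), so the leaf states form a star $K_{1,r}$ with center at the ``none missing'' state and leaves at the ``$\ell_i$ missing'' states. This is exactly the $K_{1,r}$-factor.

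The isomorphism is then the map sending a zero forcing set to the pair (leaf-state, $c$-indicator) $\in V(K_{1,r})\times V(K_2)$, and I would verify it is a bijection preserving adjacency in both directions using the Cartesian product adjacency rule: two sets are $\Z$-TAR adjacent iff their symmetric difference is a single vertex, which happens iff they agree in one coordinate and differ by an edge in the other, precisely the definition of $\square$. This establishes $\ztar(K_{1,r})=K_{1,r}\square K_2$. Finally, the connectedness parameters follow from the general principle quoted in the excerpt: since $\ztar_{\Z(G)}(G)$ is disconnected for any connected graph of order at least two, and here $\Z(K_{1,r})=\zbar(K_{1,r})=r-1$, the graph $\ztar_{r-1}(K_{1,r})$ is disconnected, forcing $\zzo(K_{1,r})=\ulzo(K_{1,r})\ge r$; and one checks $\ztar_r(K_{1,r})$ (equivalently $\ztar(K_{1,r})$, since $|V|=r+1$) is connected, giving equality. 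I expect the main obstacle to be the careful verification of the zero forcing sets, that is, proving that a set missing two or more leaves can never force, which requires observing that if two leaves are white and $c$ is blue then $c$ has two white neighbors and is stuck, while if $c$ is also white nothing is forced at all; the rest is bookkeeping that the product structure makes transparent.
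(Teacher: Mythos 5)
Your proposal is correct and follows essentially the same route as the paper: determine that the zero forcing sets of $K_{1,r}$ are exactly the sets containing at least $r-1$ leaves, then read off the product structure (the paper phrases this as a partition into two induced copies of $K_{1,r}$ joined by a perfect matching on the center vertex $0$, which is your (leaf-state, $c$-indicator) coordinatization). The only quibble is the parenthetical claiming $\mathscr{Z}^{{\tiny\rm TAR}}_r(K_{1,r})$ is ``equivalently'' $\mathscr{Z}^{{\tiny\rm TAR}}(K_{1,r})$ --- it is the full TAR graph minus the vertex $V(K_{1,r})$ --- but your connectedness conclusion is unaffected since that subgraph is still connected through the all-leaves set.
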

\begin{proof}
Let  $V(K_{1,r}) = \{0,1,\ldots,r\}$ where $0$ is the unique vertex of degree $r$. The collection of zero forcing sets of $K_{1,r}$ can be partitioned as
\[
T_1 = \{V(K_{1,r})\} \cup \{S\subseteq V(K_{1,r}) : |S| = r \text{ and } 0\in S\}
\]
and 
\[
T_2 = \{V(K_{1,r})\setminus\{0\}\} \cup \{S\subseteq V(K_{1,r}) : |S| = r-1 \text{ and } 0\notin S\}.
\]
Observe that the induced subgraph of $\ztar(K_{1,r})$ on either $T_1$ or $T_2$ is $K_{1,r}$. Further, zero forcing sets $S_1\in T_1$ and $S_2\in T_2$ are adjacent in $\ztar(G)$ if and only if $S_2 = S_1\setminus\{0\}$. Thus  $\ztar(K_{1,r}) = K_{1,r} \square K_2 $.

It is immediate from the list of zero forcing sets that $\zbar(K_{1,r})=\Z(K_{1,r})=r-1$ and $\zzo(K_{1,r})=\ulzo(K_{1,r})=r$.
\end{proof}

\begin{rem} Let $n\ge 3$.  It is well known that a set   $S\subseteq V(C_n)$ is a zero forcing set of $C_n$ if and only if $S$ contains two consecutive vertices.  Thus $\Z(C_n)=\zbar(C_n)=2$ and $\zzo(C_n)=\ulzo(C_n)=3$.
\end{rem}

Next we apply  results established for TAR graphs of $X$-set parameters in \cite{PD-recon} and known properties of zero forcing to the zero forcing TAR graph and related parameters.  
 The $t$ dimensional hypercube $Q_t$ can be defined recursively in terms of the  Cartesian product as $
Q_0 = K_1 \mbox{ and }  Q_t = Q_{t-1}\square K_2.$

\begin{thm}{\rm \cite{PD-recon}}\label{t:univ-z1}
Let  $G$ be a graph of order $n\ge 2$ with no isolated vertices. Then 
\ben
\item
$\Delta(\ztar (G))=n$.
\item 
If $n\ge 2$, then $\delta(\ztar(G))=n-\zbar(G)$.
\item 
$\zbar(G)+1\le \zzo(G)\le\min\{\zbar(G)+\Z(G),n\}$.
\item  
If $\Z(G)=1$, then $\zzo(G)=\zbar(G)+1$
\item  
For every  $k\ge \Z(G)$, $\ztar_k (G)$ is a subgraph of $Q_n$.  Thus $\ztar_k (G)$  is bipartite.
\item No zero forcing TAR graph is isomorphic to a hypercube.
\een
\end{thm}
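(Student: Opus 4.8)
The plan is to exploit the fact that every hypercube is regular, so that it suffices to prove a zero forcing TAR graph is never regular under the standing hypotheses of the theorem. First I would record the degree data for $\ztar(G)$ already supplied by the earlier parts of this very theorem: part (1) gives $\Delta(\ztar(G))=n$, and part (2) gives $\delta(\ztar(G))=n-\zbar(G)$, where $n=|V(G)|\ge 2$ and $G$ has no isolated vertices. These are exactly the two ingredients needed, so no new structural analysis of $\ztar(G)$ is required.

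Next I would observe that $\zbar(G)\ge 1$. Indeed, by condition (2) in the definition of an $X$-set parameter the empty set is never a zero forcing set, so every minimal zero forcing set, and in particular every minimum one, has cardinality at least one; hence $\zbar(G)\ge\Z(G)\ge 1$. Combining this with the degree data yields
\[
\delta(\ztar(G))=n-\zbar(G)\le n-1<n=\Delta(\ztar(G)),
\]
so $\ztar(G)$ has two vertices of different degrees and is therefore not a regular graph. Since a graph isomorphic to a regular graph is itself regular, and every hypercube $Q_t$ is $t$-regular, the non-regularity of $\ztar(G)$ shows it cannot be isomorphic to any $Q_t$. The degenerate case $Q_0=K_1$ is covered by the same reasoning (a non-regular graph is not isomorphic to the $0$-regular $K_1$), or alternatively by noting that once $n\ge 2$ the graph $\ztar(G)$ has at least $n+1$ vertices and so cannot be $K_1$.

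I do not expect a genuine obstacle in this argument; the degree computation is immediate once parts (1) and (2) are granted. The only point that warrants care is the dependence on the hypothesis that $G$ has no isolated vertices and order at least two, since these are precisely what guarantee both that the formula $\delta(\ztar(G))=n-\zbar(G)$ applies and that $\zbar(G)$ is strictly positive. For a base graph that is a union of isolated vertices this conclusion would fail, as then the only zero forcing set is $V(G)$ and $\ztar(G)\cong K_1=Q_0$; so the standing assumption on $G$ is essential and should be flagged explicitly.
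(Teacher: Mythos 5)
The paper does not actually prove this theorem---it is quoted from \cite{PD-recon} as background---so there is no in-paper argument to compare yours against line by line. Judged on its own terms, your proposal has a genuine gap of scope: the statement consists of six claims, and you prove only item (6), explicitly taking items (1) and (2) as given inputs and never addressing items (3), (4), or (5) at all. A proof of the theorem must establish all six parts. Items (1) and (2) are not hard---$V(G)$ has degree $n$ because condition (4) of Definition \ref{X-set-param} makes every $(n-1)$-subset a zero forcing set and no vertex of $\ztar(G)$ can have more than $n$ neighbors, while a minimal zero forcing set of maximum cardinality $\zbar(G)$ has no subset-neighbors and so realizes the minimum degree $n-\zbar(G)$---but they need to be said, since your argument for (6) rests entirely on them. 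Items (3) and (4) require a connectivity argument (for the upper bound one walks from an arbitrary zero forcing set down to a minimal one and then up through the union with a fixed minimum zero forcing set; for (4) note that $\zbar(G)\le n-1$ forces the two bounds in (3) to coincide when $\Z(G)=1$), and item (5) needs the observation that the map sending a set to its characteristic vector embeds $\ztar_k(G)$ into $Q_n$.

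For item (6) itself your reasoning is correct and is the standard one: $\zbar(G)\ge\Z(G)\ge 1$ because the empty set is never a zero forcing set, so $\delta(\ztar(G))=n-\zbar(G)\le n-1<n=\Delta(\ztar(G))$, the graph is not regular, and every hypercube is regular. Your handling of the degenerate case $Q_0=K_1$ and your remark that the no-isolated-vertices hypothesis is essential (since $\ztar(rK_1)\cong K_1=Q_0$) are both apt. So the one part you prove is proved well; the problem is the five parts you do not.
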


 The next result is immediate since $K_2=Q_1$.
\begin{cor}\label{no-ZTAR=K2}
There does not exist a graph $G$ such that  $\ztar(G) = K_2$.   \end{cor}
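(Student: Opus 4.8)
The plan is to reduce the claim to the already-established fact in Theorem \ref{t:univ-z1}(6) that no zero forcing TAR graph is isomorphic to a hypercube. The single observation that drives the argument is that $K_2$ is precisely the one-dimensional hypercube $Q_1$: from the recursive definition, $Q_1 = Q_0 \square K_2 = K_1 \square K_2 = K_2$. Consequently, if some graph $G$ satisfied $\ztar(G) = K_2$, then $\ztar(G)$ would be isomorphic to the hypercube $Q_1$, directly contradicting Theorem \ref{t:univ-z1}(6). No further work is required, and there is no genuine obstacle; the entire content resides in the prior theorem.

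Should one want a self-contained argument avoiding the citation, I would instead count zero forcing sets directly and show that $\ztar(G)$ always has either exactly one vertex or at least three. First I would invoke the reduction noted in Section \ref{ss:univ}: since $\ztar(G \sqcup rK_1) \cong \ztar(G)$ whenever $G$ has no isolated vertices, it suffices to treat two cases. If $G = rK_1$ consists only of isolated vertices, then the unique zero forcing set is $V(G)$, so $\ztar(G) = K_1 \neq K_2$. Otherwise $G$ has a component that is not an isolated vertex, so write $G = H \sqcup rK_1$ with $H$ of order $m \geq 2$ having no isolated vertices; then $\ztar(G) \cong \ztar(H)$, and by condition (4) of Definition \ref{X-set-param} each of the $m$ sets of $m-1$ vertices is a zero forcing set of $H$, as is $V(H)$, yielding at least $m + 1 \geq 3$ distinct vertices of $\ztar(H)$.

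The only mild subtlety in this alternative route is bookkeeping the isolated-vertex cases correctly, since condition (4) of the $X$-set axioms applies only to graphs without isolated vertices; this is exactly what forces the split into the two cases above. I expect to present the short proof via Theorem \ref{t:univ-z1}(6), since the corollary is explicitly flagged as immediate from the identity $K_2 = Q_1$.
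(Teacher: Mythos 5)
Your primary argument is exactly the paper's: the corollary is stated as immediate from $K_2=Q_1$ together with Theorem \ref{t:univ-z1}(6), which is precisely your reduction. The alternative counting argument you sketch is also correct, but since you elect to present the hypercube reduction, your proof matches the paper's approach.
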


 Theorem \ref{t:univ-z1} allows us to show that the values of certain parameters of the base graphs of isomorphic zero forcing TAR graphs must be equal.
\begin{cor}\label{c:z-iso-props}{\rm \cite{PD-recon}}
Let $G$ and $G'$ be graphs with no isolated  vertices. If $\ztar(G) \cong \ztar(G')$, then
\ben
\item $|V(G)|=|V(G')|$,
\item $\Z(G)=\Z(G')$, and
\item $\zbar(G)=\zbar(G')$. 
\een 
\end{cor}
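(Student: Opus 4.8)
My plan is to recover $|V(G)|$, $\zbar(G)$, and $\Z(G)$ as isomorphism invariants of the abstract graph $\ztar(G)$, using Theorem~\ref{t:univ-z1}. Write $n=|V(G)|$, $n'=|V(G')|$, and let $\phi\colon\ztar(G)\to\ztar(G')$ be an isomorphism. Conclusions (1) and (3) are immediate. Since $\phi$ preserves maximum degree, Theorem~\ref{t:univ-z1}(1) gives $n=\Delta(\ztar(G))=\Delta(\ztar(G'))=n'$, which is (1); since $\phi$ also preserves minimum degree, Theorem~\ref{t:univ-z1}(2) gives $n-\zbar(G)=\delta(\ztar(G))=\delta(\ztar(G'))=n'-\zbar(G')$, and combining this with $n=n'$ yields $\zbar(G)=\zbar(G')$, which is (3).

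Conclusion (2) is the real content, since $\Z$ is not determined by $n$ and $\zbar$ alone --- the Brimkov--Carlson family with $\Z(G_n)=5$ and $\zbar(G_n)=n-2$ already shows this --- so a genuinely structural invariant is required. The plan is to establish the lemma that the radius of $\ztar(G)$ equals $n-\Z(G)$, with $V(G)$ a central vertex; granting this, both the radius and $n$ are preserved by $\phi$, so $\Z(G)=n-\text{radius}(\ztar(G))=n'-\text{radius}(\ztar(G'))=\Z(G')$. The upper bound $\text{radius}(\ztar(G))\le n-\Z(G)$ is the easy half: because every superset of a zero forcing set is a zero forcing set (Definition~\ref{X-set-param}(1)), from $V(G)$ one reaches any zero forcing set $S$ by deleting the vertices of $V(G)\setminus S$ one at a time through zero forcing sets, and since $\ztar(G)$ is a subgraph of $Q_n$ (Theorem~\ref{t:univ-z1}(5)) no walk is shorter than the Hamming distance $n-|S|$; hence $\dist_{\ztar(G)}(V(G),S)=n-|S|$, the eccentricity of $V(G)$ is $\max_S(n-|S|)=n-\Z(G)$, and the radius is at most this.

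The hard part, and the main obstacle, is the reverse inequality: every zero forcing set $T$ has eccentricity at least $n-\Z(G)$. The naive bound is insufficient here, since $\dist_{\ztar(G)}(T,V(G))=n-|T|$ together with $\dist_{\ztar(G)}(T,S_0)\ge |T|-\Z(G)$ for a minimum zero forcing set $S_0$ only forces the eccentricity to be at least $(n-\Z(G))/2$ when $|T|$ lies in the middle range. The essential phenomenon is that graph distances in $\ztar(G)$ can strictly exceed Hamming distances, because a geodesic between two zero forcing sets may be compelled to first move \emph{upward} (adding vertices) in order to keep every intermediate set a zero forcing set. To finish I would fix $T$ and exhibit a single witness at distance at least $n-\Z(G)$ --- most naturally a minimum zero forcing set chosen to meet $T$ in as few vertices as possible --- and bound from below the unavoidable upward detour, so that removals, additions, and forced upward steps together total at least $n-\Z(G)$. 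Controlling this detour uniformly across all base graphs is the crux; once it is established, conclusion (2) follows as above.
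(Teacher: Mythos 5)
Your treatment of (1) and (3) via $\Delta(\ztar(G))=n$ and $\delta(\ztar(G))=n-\zbar(G)$ is correct (the paper itself offers no proof of this corollary, citing \cite{PD-recon}). The problem is (2): you never prove the lower bound $\operatorname{rad}(\ztar(G))\ge n-\Z(G)$, and your diagnosis of why it is hard rests on a false premise. You assert that distances in $\ztar(G)$ can strictly exceed Hamming distances because a geodesic ``may be compelled to first move upward.'' In fact upward moves cost nothing extra: for any two zero forcing sets $S$ and $S'$, the walk that first adds the elements of $S'\setminus S$ one at a time (every intermediate set is a superset of $S$, hence a zero forcing set by Definition \ref{X-set-param}(1)) and then deletes the elements of $S\setminus S'$ one at a time (every intermediate set is a superset of $S'$) has length exactly $|S\ominus S'|$, matching the Hamming lower bound coming from $\ztar(G)$ being a subgraph of $Q_n$. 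So $\dist_{\ztar(G)}(S,S')=|S\ominus S'|$ for all pairs; there is no ``unavoidable upward detour'' to control, and the plan of bounding one from below cannot be carried out.

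With the correct distance formula your lemma is true and the finish is short --- but the witness must be a large set, not the nearly disjoint minimum zero forcing set you propose (which, as you yourself observe, gives only about half the needed bound when $|T|$ is in the middle range). Fix a minimum zero forcing set $S_0$ and take $S=(V(G)\setminus T)\cup S_0$, which is a zero forcing set because it contains $S_0$. Then $S\setminus T=V(G)\setminus T$ and $T\setminus S=T\setminus S_0$, so $\dist(T,S)=|T\ominus S|=n-|T\cap S_0|\ge n-\Z(G)$. Hence every vertex of $\ztar(G)$ has eccentricity at least $n-\Z(G)$, while $V(G)$ has eccentricity exactly $n-\Z(G)$, so $\operatorname{rad}(\ztar(G))=n-\Z(G)$ and (2) follows as you intended. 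For comparison, the route most consonant with the paper's own machinery is Lemma \ref{interval}: induced hypercubes in $\ztar(G)$ are exactly intervals $[S,T]$ with $S$ a zero forcing set, so the largest induced hypercube has dimension $n-\Z(G)$ (realized by $[S_0,V(G)]$), an isomorphism invariant that yields $\Z(G)=\Z(G')$ directly.
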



\section{Irrelevant sets and TAR graph isomorphisms}\label{s:xtar-iso}
The TAR reconfiguration graph of an $X$-set parameter carries a great deal of information about the $X$-sets of $G$, but the information about $X$-sets of $G$ does not generally determine $G$; in particular,  the information about   zero forcing sets of $G$ sets does not generally determine $G$.   

We show that for graphs $G$ and $G'$  with no isolated vertices, $\xtar(G)\cong\xtar(G')$ implies there is a relabeling of the vertices of $G'$ such that $G$ and $G'$ have exactly the same $X$-sets (see  Theorem \ref{isomorph-same-zfs}).  When applied to zero forcing, $\ztar(G)\cong\ztar(G')$ implies $G$ and $G'$ necessarily have the same the zero forcing polynomials (as defined in \cite{zfpolys}, see Section \ref{ss:zfpoly}). Showing that an isomorphism of zero forcing TAR graphs implies that their base graphs have the same the zero forcing polynomial triggered this study of isomorphisms.

Irrelevant sets are introduced in Section \ref{ss:iso-level} to prove that if $\xtar(G)\cong\xtar(G')$, then there is an isomorphism that preserves the cardinality of $X$-sets  (see  Theorem \ref{isomorph-isometry-u}).  The focus of Section \ref{ss:iso-level} is proving Theorem \ref{isomorph-same-zfs}, but irrelevant sets are interesting in their own right (and exhibit significant differences among various $X$-set parameters).  These ideas are discussed in Section \ref{ss:irrelevant}.  
 
In Section \ref{ss:unique} we establish  that certain zero forcing TAR graphs are unique up to isomorphism of the base graph (assuming the base graph has no isolated vertices), and also show that a cycle and a cycle with one chord have the same zero forcing TAR graphs (see Proposition \ref{p:Cn+e}).

\subsection{Isomorphisms of the TAR graph}\label{ss:iso-level}

 There is a natural poset structure on the
 vertices of $X$-TAR reconfiguration graphs whose partial ordering is determined by set inclusion. This poset structure can be used to better understand the isomorphisms between $X$-TAR reconfiguration graphs. We begin this subsection with a result showing that the vertices of induced hypercubes of $X$-TAR reconfiguration graphs are intervals in the aforementioned poset. To formally state this result, we recall some basic poset terminology found in \cite{stanley}.

 Let $Y$ be a set and $P=(Y,\leq)$ be a poset. An element $u\in Y$ is \textit{maximal} (respectively, \textit{minimal}) if $u\not< y$ (respectively, $u\not< y$) for each $y\in Y$. Elements $u,v\in Y$ are \textit{comparable} provided $u\leq v$ or $v\leq u$. If $u\leq v$ in $P$, then the set $[u,v] = \{y : u\leq y \leq v\}$ is called an \textit{interval}. A \textit{chain} is a poset such that each pair of elements is comparable. The \emph{length} of a chain $C$ is $|C|-1$ and the length of the interval $[u,v]$ is
 the maximum length of a chain in $[u,v]$ and is denoted by $\ell(u,v)$.

For a set $T$, let $\mathcal{P}(T)$ denote the power set of $T$. We remark that $\xtar(G)$ is a join-semilattice contained in $(\mathcal{P}(V(G)),\subseteq)$  and  $\ell(S,S')=\dist(S,S')$ for any interval $[S,S']$ in $\xtar(G)$ since $S$ and $S'$ are comparable (where $\dist(S,S')$ is the distance in the graph $\xtar(G)$).

\begin{lem}\label{interval}
Let $X$ be an $X$-set parameter, let $t\geq 0$ be an integer, let $G$ be a graph on $n$ vertices, and let $H$ be an induced subgraph of $\xtar(G)$.  If $H\cong Q_t$, then $V(H)$ is an interval  of length $t$ in the poset $(\mathcal{P}(V(G)),\subseteq)$.
\end{lem}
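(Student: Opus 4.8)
The plan is to reduce the statement to a purely hypercube-theoretic fact. Since two $X$-sets are adjacent in $\xtar(G)$ exactly when their symmetric difference has size $1$, the graph $\xtar(G)$ is an \emph{induced} subgraph of the Boolean hypercube on $\mathcal{P}(V(G))$ (whose edges join subsets differing by one element). Hence $H$, being induced in $\xtar(G)$, is an induced subgraph of this $n$-cube, and it suffices to show that an induced copy of $Q_t$ inside the $n$-cube must be a subcube, i.e.\ an interval $[A',B']$ of $(\mathcal{P}(V(G)),\subseteq)$ with $|B'\setminus A'| = t$. The key elementary tool is that in any hypercube two vertices at distance $2$ have exactly two common neighbors; combined with injectivity of the isomorphism, this lets me reconstruct $H$ coordinate by coordinate.

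Concretely, I fix an isomorphism $\phi\colon Q_t\to H$, identify $V(Q_t)$ with $\{0,1\}^t$, and set $A=\phi(\mathbf{0})$. The $t$ neighbors of $\mathbf{0}$ map to the $t$ neighbors of $A$ in $H$, each differing from $A$ by a single vertex, so there are distinct $v_1,\dots,v_t\in V(G)$ with $\phi(e_i)=A\ominus\{v_i\}$ (distinctness from injectivity of $\phi$). I then prove by induction on the Hamming weight of $\mathbf{x}$ that $\phi(\mathbf{x})=A\ominus\{v_i : i\in\operatorname{supp}(\mathbf{x})\}$. For the inductive step at a vector $\mathbf{y}$ of weight $k+1\ge 2$, I choose two indices $i_1,i_2\in\operatorname{supp}(\mathbf{y})$; the images of the lower neighbors $\mathbf{y}-e_{i_1}$ and $\mathbf{y}-e_{i_2}$ are fixed by the inductive hypothesis and lie at distance $2$ in the $n$-cube, so $\phi(\mathbf{y})$, being their common neighbor inside the \emph{induced} subgraph $H$, must be one of their exactly two common neighbors. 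One of these equals $\phi(\mathbf{y}-e_{i_1}-e_{i_2})$ and is excluded by injectivity, forcing $\phi(\mathbf{y})=A\ominus\{v_i : i\in\operatorname{supp}(\mathbf{y})\}$. This coordinate-reconstruction induction is the technical heart of the argument and the step I expect to need the most care.

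With this in hand, $V(H)=\{A\ominus T : T\subseteq\{v_1,\dots,v_t\}\}$. Partitioning $\{v_1,\dots,v_t\}$ into $P=\{v_i : v_i\notin A\}$ and $Q=\{v_i : v_i\in A\}$, forming $A\ominus T$ adds a subset of $P$ to $A$ and deletes a subset of $Q$ from $A$; hence the least and greatest members of $V(H)$ are $A\setminus Q$ and $A\cup P$, with $A\setminus Q\subseteq A\cup P$ and $(A\cup P)\setminus(A\setminus Q)=P\cup Q$ of size $t$. A direct check shows every $S$ with $A\setminus Q\subseteq S\subseteq A\cup P$ has the form $A\ominus T$ for some $T\subseteq\{v_1,\dots,v_t\}$, so $V(H)=[A\setminus Q,\,A\cup P]$ is an interval in $(\mathcal{P}(V(G)),\subseteq)$. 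Finally, since $|(A\cup P)\setminus(A\setminus Q)|=t$ and $\ell(A\setminus Q,A\cup P)=\dist(A\setminus Q,A\cup P)$ by the remark preceding the lemma, this interval has length $t$, completing the proof. I note that only the embedding of $H$ as an induced subgraph of the hypercube is used, so the $X$-set axioms play no role in this particular lemma.
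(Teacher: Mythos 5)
Your proof is correct, but it takes a genuinely different route from the paper's. The paper works inside the poset $(V(H),\subseteq)$: it shows by contradiction (via a carefully chosen path between two putative maximal elements and the fact that two vertices of a hypercube have exactly $0$ or $2$ common neighbors) that $H$ has a unique maximal element $T$ and a unique minimal element $S$, and then finishes by counting: $V(H)\subseteq[S,T]$, $\dist(S,T)\le t$ forces $|[S,T]|\le 2^t=|V(H)|$, so $V(H)=[S,T]$. You instead observe that $\xtar(G)$ sits inside the $n$-dimensional Boolean hypercube on $\mathcal{P}(V(G))$ and reconstruct the copy of $Q_t$ coordinate by coordinate: fixing $\phi\colon Q_t\to H$ with $A=\phi(\mathbf{0})$ and $\phi(e_i)=A\ominus\{v_i\}$, you induct on Hamming weight, using the same two-common-neighbors fact (applied in the ambient $Q_n$ rather than in $H$) together with injectivity of $\phi$ to pin down $\phi(\mathbf{y})=A\ominus\{v_i:i\in\operatorname{supp}(\mathbf{y})\}$. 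The details check out: the two lower neighbors are at distance exactly $2$ since the $v_i$ are distinct, and the unwanted common neighbor is $\phi(\mathbf{y}-e_{i_1}-e_{i_2})$, excluded by injectivity; the final identification of $V(H)$ with the interval $[A\setminus Q,\,A\cup P]$ and the length computation are routine. What your approach buys is an explicit description of every vertex of $H$ and of the interval's endpoints, and it makes transparent that the lemma is a pure hypercube fact (indeed your argument never uses that $H$ is \emph{induced} in $Q_n$, only that it is a subgraph, so it proves something slightly stronger). What the paper's approach buys is brevity at the end (a counting argument replaces your weight induction and endpoint bookkeeping) and a poset-theoretic framing that matches how the lemma is used later, e.g.\ in Lemma \ref{lem:ord_pres}.
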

\bpf
The claim is obvious for $t=0,1$. Suppose that $t\geq 2$. We begin by showing that $(V(H),\subseteq)$ has exactly 1 maximal element. 

Assume, to the contrary, that $(V(H),\subseteq)$ has at least 2 maximal elements $u$ and $v$. Since $H$ is connected, there exists a path from $u$ to $v$ in $H$. Every path $P$ from $u$ to $v$ in $H$ can be written in the form $uy_1\cdots y_i v$. For each such $P$, let $d_P$ to be the smallest index $k$ such that $y_{k-1}\subseteq y_k$, where $y_0 = u$ and $y_{i+1} = v$. Note that since $u$ and $v$ are maximal in $(V(H),\subseteq)$, $d_P\geq 2$ is always defined.

Let $d$ be the minimum $d_P$ amongst all paths $P$ from $u$ to $v$ in $H$. Pick a path $P^*$ in $H$ of the form $ux_1\cdots x_i v$ such that $d_{P^*} = d$, and let $x_0 = u$ and $x_{i+1} = v$. Then $x_{d-1}\subseteq x_{d-2}$  because $x_{d-2}\not\subseteq x_{d-1}$ and $x_{d-2}$ is adjacent to $x_{d-1}$. Since $x_{d-2}$ and $x_d$ have  $x_{d-1}$ as a common neighbor and each pair of vertices in a hypercube share exactly 0 or 2 common neighbors, there exists some vertex $w \not=x_{d-1}$ in $H$ that is adjacent to $x_{d-2}$ and $x_d$. 
By our choice of $d$, $w\subseteq x_{d-2}$ and $w\subseteq x_d$   or else $P'=ux_1\dots x_{d-2}wx_{d}\dots x_iv$ would be a path with $d_{P'}<d_{P^*}$ since  $w=x_{d-2}\cup x_d $. 
Hence $x_{d-2}\cap x_d = w$. This is absurd since $x_{d-2}\cap x_d = x_{d-1}$ and $w\not=x_{d-1}$.

 Thus, $H$ has exactly 1 maximal element $T$. A similar argument shows $H$ has exactly 1 minimal element $S$ in the subset partial ordering of $\xtar(G)$. Thus, $S\subseteq R\subseteq T$ for every $R\in V(H)$.  Since $H\cong Q_t$, $\dist(S,T)\le \diam(Q_t)=t$.  So there are at most $2^t$ elements in the interval $[S,T]$.  But $|V(H)|=|V(Q_t)|=2^t$. Therefore, $\dist(S,T)=t$ and $V(H)=[S,T]$.
\epf

\begin{rem} 
We remark that the proof of Lemma \ref{interval} is really an argument about cover graphs. For a poset $(Y,\le)$ and  $x,y\in Y$, we say that $y$ covers $x$ provided  $x< y$ and there is no $z\in Y$ such that  $x< z < y$.  The \textit{cover graph} or \emph{Hasse diagram graph} of $P$ is the simple graph with vertices in $Y$ and an edge $uv$ if and only if $u$ covers $v$, or $v$ covers $u$. Lemma \ref{interval}  says that if $G\cong Q_t$, then $V(G)$ is an interval in $(\mathcal{P}(Y),\subseteq)$.\end{rem}

\begin{lem}\label{lem:ord_pres}
Let $X$ be an $X$-set parameter, let  $G$ and $G'$ be graphs. Suppose $\varphi:\xtar(G)\to \xtar(G')$ is an isomorphism. Let $S^\prime = \varphi(V(G))$. If $Y^\prime$ is a minimal $X$-set of $G^\prime$, then $Y^\prime\subseteq S^\prime$.
\end{lem}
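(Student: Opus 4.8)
The plan is to characterize the top vertex $V(G)$ of $\xtar(G)$ in a purely graph-theoretic way—as a vertex lying in \emph{every} maximal induced hypercube—and then transport this characterization across $\varphi$. First I would record that $V(G)$ is the maximum element of $\xtar(G)$ in the poset $(\mathcal{P}(V(G)),\subseteq)$, since every $X$-set is contained in $V(G)$, and that for any minimal $X$-set $Y$ of $G$ the interval $[Y,V(G)]$ consists entirely of $X$-sets by condition (1) of Definition \ref{X-set-param}, and hence induces a hypercube $Q_{|V(G)|-|Y|}$ in $\xtar(G)$ (within such an interval, differing by one element is exactly hypercube adjacency).

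Second, I would identify the maximal induced hypercubes. By Lemma \ref{interval}, the vertex set of any induced hypercube of $\xtar(G)$ is an interval $[S,T]$, and since its vertices are $X$-sets, every element of $[S,T]$ is an $X$-set. Because $X$-sets are upward closed, $[S,V(G)]$ still consists of $X$-sets, so the top can be raised to $V(G)$; and choosing a minimal $X$-set $Y\subseteq S$ lowers the bottom to $Y$, with $[Y,V(G)]$ still entirely $X$-sets. Conversely, each $[Y,V(G)]$ with $Y$ minimal is maximal: enlarging the interval upward is impossible ($V(G)$ is the top), and enlarging it downward would force a proper $X$-subset of $Y$, contradicting minimality. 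Thus the maximal induced hypercubes of $\xtar(G)$ are exactly the intervals $[Y,V(G)]$ as $Y$ ranges over the minimal $X$-sets of $G$, and every one of them contains $V(G)$.

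Third, I would conclude. Since $\varphi$ is a graph isomorphism, it carries maximal induced hypercubes of $\xtar(G)$ bijectively onto those of $\xtar(G')$; as $V(G)$ lies in every maximal induced hypercube of $\xtar(G)$, its image $S'=\varphi(V(G))$ lies in every maximal induced hypercube of $\xtar(G')$. Applying the same description to $G'$, the maximal induced hypercubes of $\xtar(G')$ are precisely the intervals $[Y',V(G')]$ for $Y'$ a minimal $X$-set of $G'$. Hence $S'\in[Y',V(G')]$, which by the definition of the interval means $Y'\subseteq S'\subseteq V(G')$, giving $Y'\subseteq S'$ as claimed.

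The main obstacle is the identification of the maximal induced hypercubes with the intervals $[Y,V(G)]$; this rests entirely on Lemma \ref{interval} (to know induced hypercubes are intervals) together with upward-closure of $X$-sets (to control how intervals extend top and bottom). Some care is needed with degenerate cases—most notably when $V(G)$ is itself a minimal $X$-set, so the associated hypercube is the single vertex $Q_0$—but these do not affect the reasoning. I would also emphasize that this argument never invokes the absence of isolated vertices, consistent with the lemma being stated for arbitrary $G$ and $G'$.
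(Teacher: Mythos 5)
Your proof is correct, and it rests on the same key ingredient as the paper's proof --- Lemma \ref{interval} --- but it is organized differently. The paper argues in one pass: it pulls $Y'$ back to $Y=\varphi^{-1}(Y')$, notes that $[Y,V(G)]$ induces a hypercube in $\xtar(G)$ containing both $Y$ and $V(G)$, pushes that hypercube forward, and applies Lemma \ref{interval} to see that its image is an interval $[Z',W']$; since $Z'$ is an $X$-set contained in the minimal $X$-set $Y'$, it equals $Y'$, and since $S'=\varphi(V(G))$ lies in $[Y',W']$, the containment $Y'\subseteq S'$ follows. You instead prove a stronger structural statement --- that the maximal induced hypercubes of $\xtar(G)$ are exactly the intervals $[Y,V(G)]$ as $Y$ ranges over the minimal $X$-sets, so that $V(G)$ is precisely a vertex lying in every maximal induced hypercube --- and then transport this purely graph-theoretic characterization across $\varphi$. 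Your verification of that classification is sound: upward closure of $X$-sets lets you raise the top of any hypercube interval to $V(G)$ and lower its bottom to a minimal $X$-set, and Lemma \ref{interval} (which forces any induced hypercube containing $[Y,V(G)]$ to be an interval $[S,T]$ with $S\subseteq Y\subseteq V(G)\subseteq T$) gives maximality. The paper's route is shorter and needs only the one interval attached to the given $Y'$; yours costs the extra classification but yields a reusable invariant description of the vertex $V(G)$ inside $\xtar(G)$, makes the conclusion an immediate transport argument, and, as you note, makes transparent why no hypothesis on isolated vertices is required.
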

\bpf
Let $Y^\prime$ be a minimal $X$-set of $G^\prime$. Define $Y = \varphi^{-1}(Y^\prime)$. The interval $[Y,V(G)]$ in $(\mathcal{P}(V(G),\subseteq)$ forms an induced $Q_t$ in $\xtar(G)$ for some integer $t>0$. By Lemma \ref{interval}, $\varphi([Y,V(G)])$ is an interval $[Z^\prime,W^\prime]$ in $(\mathcal{P}(V(G^\prime)),\subseteq)$. Since $Y^\prime$ is a minimal $X$-set of $G^\prime$, $Z^\prime = Y^\prime$. Thus, $Y^\prime \subseteq S^\prime$.
\epf

Some zero forcing TAR graphs have automorphisms that do not preserve the cardinality of the zero forcing sets that are the vertices of the TAR graph (see Example \ref{star-irrelevant-center}).  The next definition allows us to manage that issue.

\begin{defn}\label{d:irrelevant}
Let $G$ be a graph. 
We say that a vertex  $v\in  V(G)$ is \textit{$X$-irrelevant}  if $x\not\in S $ for every minimal $X$-set $S$ of $G$.  A set $R\subseteq V(G)$ is an \textit{$X$-irrelevant set}  if every vertex of $R$ is $X$-irrelevant.
\end{defn}

Irrelevant vertices exist for zero forcing as seen in the next example, but not for domination as discussed in Section \ref{ss:irrelevant}.
\begin{ex}\label{star-irrelevant-center}
Observe that the center vertex of $ K_{1,r}$ is a $\Z$-irrelevant vertex  and is in fact the only $\Z$-irrelevant vertex.
\end{ex}

\begin{defn}\label{d:nuR}Let $G$ be a graph and $R\subseteq V(G)$. We define the map 
\[
\nu_R: V(\xtar(G)) \to \mathcal{P}(V(G)) \quad \text{via} \quad \nu_R(S) = S\ominus R.
\]
\end{defn}

For a nonempty set $R$, the map $\nu_R$ maps $X$-sets to $X$-sets, and is thus  an automorphism of $\xtar(G)$. This is illustrated in the next example and shown in Theorem \ref{irrel_isom}. Observe that $\nu_R$ maps $S$ to $S'$ with $|S'|\ne |S|$.  

\begin{ex}\label{star-irrelevant-center2}
Consider $ K_{1,r}$ with center vertex $0$.   Recall that $\ztar( K_{1,r})= K_{1,r}\square K_2$ (see Figure \ref{ztarK13} for $\ztar(K_{1,3})$). For  $R=\{0\}$,    $\nu_R$ is the automorphism of $ K_{1,r}\square K_2$ obtained by reversing the two vertices of $K_2$. 
\end{ex}

\begin{thm}\label{irrel_isom}
Let $X$ be an $X$-set parameter, let $G$ be a graph, and let  $R\subseteq V(G)$. Then $\nu_R$ is a graph automorphism of $\xtar(G)$ if and only if $R$ is $X$-irrelevant.
\end{thm}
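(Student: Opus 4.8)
The plan is to reduce the biconditional to a single statement about which sets are $X$-sets. First I would record two elementary facts that hold for every $R\subseteq V(G)$: the map $\nu_R$ is an involution on $\mathcal{P}(V(G))$, since $(S\ominus R)\ominus R=S$, and it preserves symmetric differences exactly, because $\nu_R(S_1)\ominus\nu_R(S_2)=(S_1\ominus R)\ominus(S_2\ominus R)=S_1\ominus S_2$. The second identity gives $|\nu_R(S_1)\ominus\nu_R(S_2)|=|S_1\ominus S_2|$, so $\nu_R$ automatically preserves and reflects adjacency in $\xtar(G)$. Hence whether $\nu_R$ is an automorphism depends only on whether it carries the vertex set of $\xtar(G)$ (the family of $X$-sets) onto itself. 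Because $\nu_R$ is an involutive bijection of $\mathcal{P}(V(G))$, mapping $X$-sets into $X$-sets is equivalent to mapping them bijectively onto the $X$-sets: surjectivity is automatic, since if $T$ and $\nu_R(T)$ are both $X$-sets then $T=\nu_R(\nu_R(T))$ is an image. Moreover a bijection of $\mathcal{P}(V(G))$ that fixes the family of $X$-sets setwise also fixes the family of non-$X$-sets setwise. I would package this as the equivalence: $\nu_R$ is an automorphism of $\xtar(G)$ if and only if, for every $S\subseteq V(G)$, the set $S$ is an $X$-set precisely when $S\ominus R$ is an $X$-set.

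With this reformulation the forward direction is short. Assuming $R$ is $X$-irrelevant, let $S$ be any $X$-set. By finiteness of $V(G)$, choose an $X$-set $S_0\subseteq S$ of minimum cardinality; then $S_0$ is a minimal $X$-set, so $X$-irrelevance yields $S_0\cap R=\emptyset$, and therefore $S_0\subseteq S\setminus R\subseteq S\ominus R$. Condition (1) of Definition \ref{X-set-param} then forces $S\ominus R$ to be an $X$-set. This shows $\nu_R$ maps $X$-sets to $X$-sets, which by the first paragraph already makes $\nu_R$ an automorphism.

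For the converse I would argue by contraposition. Suppose $R$ is not $X$-irrelevant, so some $v\in R$ lies in a minimal $X$-set $S_0$; by minimality $S_0\setminus\{v\}$ is not an $X$-set. The key computation is that, since $v\in S_0\cap R$, we have $v\notin S_0\ominus R$ but $v\in(S_0\setminus\{v\})\ominus R$, while the two sets agree on every other element; hence $(S_0\setminus\{v\})\ominus R=(S_0\ominus R)\cup\{v\}$. Now $\nu_R(S_0)=S_0\ominus R$, so if $\nu_R$ were an automorphism this set would be an $X$-set, and then its superset $(S_0\ominus R)\cup\{v\}=(S_0\setminus\{v\})\ominus R$ would be an $X$-set as well by condition (1). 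But that set is $\nu_R(S_0\setminus\{v\})$, the image of the non-$X$-set $S_0\setminus\{v\}$, contradicting the fact from the reformulation that an automorphism sends non-$X$-sets to non-$X$-sets. Thus no vertex of $R$ lies in a minimal $X$-set, i.e.\ $R$ is $X$-irrelevant.

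I expect the main obstacle to be the reduction carried out in the first paragraph rather than either implication. The crux is recognizing that edge-preservation is free and that the genuine content is the setwise stability of the $X$-set family, and then exploiting the involution property to upgrade ``$\nu_R$ maps $X$-sets into $X$-sets'' to the full biconditional that also controls the non-$X$-sets. Once that equivalence is available, both directions are immediate consequences of upward closure together with the observation that symmetric-differencing by $R$ interacts with a single element $v\in S_0\cap R$ exactly by toggling $v$.
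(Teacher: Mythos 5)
Your proof is correct, and the ``irrelevant $\Rightarrow$ automorphism'' direction coincides with the paper's (pick a minimal $X$-set $S_0\subseteq S$, note $S_0\subseteq S\setminus R\subseteq S\ominus R$, apply upward closure). The other direction, however, is genuinely different. The paper deduces ``automorphism $\Rightarrow$ irrelevant'' from Lemma \ref{lem:ord_pres}, which applies to an arbitrary isomorphism of $X$-TAR graphs and rests on the hypercube--interval machinery of Lemma \ref{interval}; in the special case $\varphi=\nu_R$ it gives that every minimal $X$-set lies in $\nu_R(V(G))=V(G)\setminus R$. You instead exploit the specific algebraic form of $\nu_R$: it is an involution of $\mathcal{P}(V(G))$ satisfying $\nu_R(S_1)\ominus\nu_R(S_2)=S_1\ominus S_2$, so adjacency preservation is free and the whole question reduces to setwise stability of the $X$-set family; then the contrapositive follows from the single-element toggle $(S_0\setminus\{v\})\ominus R=(S_0\ominus R)\cup\{v\}$ together with upward closure. (Your reduction is sound: the involution property does upgrade ``maps $X$-sets into $X$-sets'' to a bijection of the $X$-set family, and hence to stability of the non-$X$-sets as well, and a minimum-cardinality $X$-subset of $S$ is indeed a minimal $X$-set.) The paper's route is economical in context, since Lemma \ref{lem:ord_pres} is needed anyway for Theorem \ref{isomorph-isometry-u}; your route is self-contained and more elementary, requiring none of the poset or hypercube structure, and it makes explicit the point the paper compresses into ``by construction, adjacency is preserved.''
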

\bpf
Suppose that $\nu_R$ is an automorphism. By Lemma \ref{lem:ord_pres} every minimal $X$-set of $G$ is a subset of $\nu_R(V(G)) = V(G)\setminus R$. Thus, $R$ is $X$-irrelevant.

Suppose that $R$ is $X$-irrelevant. Let $S$ be an $X$-set of $G$. Then there exists some minimal $X$-set $T\subseteq S$. Since $R$ is $X$-irrelevant, $T\subseteq S\setminus R \subseteq S$. Thus, $\nu_R(S)\supseteq S\setminus R$ is an $X$-set of $G$. By construction, adjacency is preserved by $\nu_R$. Therefore, $\nu_R$ is an automorphism.
\epf

We restate Theorem \ref{irrel_isom} for the specific case of zero forcing.

\begin{cor}\label{irrel_isom_cor}
Let $G$ be a graph and let  $R\subseteq V(G)$. Then $\nu_R$ is a graph automorphism of $\ztar(G)$ if and only if $R$ is $\Z$-irrelevant.
\end{cor}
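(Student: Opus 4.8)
The plan is to recognize that this corollary is simply the specialization of Theorem \ref{irrel_isom} to the case where the $X$-set parameter is the zero forcing number $\Z$, so essentially no new work is required. The crux is observing that $\Z$ qualifies as an $X$-set parameter in the sense of Definition \ref{X-set-param}; this is already asserted in the text immediately following that definition (the zero forcing sets of $G$ satisfy monotonicity under supersets, nonemptiness, the component-union property for disconnected graphs, and the condition that every set of $|V(G)|-1$ vertices forces when $G$ has no isolated vertices). Granting this, I would first note that with $X = \Z$ the general construction $\xtar(G)$ of Definition \ref{X-set-param} coincides with $\ztar(G)$ of Definition \ref{d:ztar}, since both have the zero forcing sets of $G$ as vertices and the identical symmetric-difference adjacency rule.

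Next I would observe that the two notions of irrelevance agree: by Definition \ref{d:irrelevant}, a set $R \subseteq V(G)$ is $\Z$-irrelevant exactly when no vertex of $R$ lies in any minimal zero forcing set, which is precisely the statement that $R$ is $X$-irrelevant for the parameter $X = \Z$. The map $\nu_R$ of Definition \ref{d:nuR} is likewise defined without reference to the particular parameter, so it is the same map in both settings. With these identifications in hand, the corollary follows immediately by invoking Theorem \ref{irrel_isom} with $X$ taken to be $\Z$: $\nu_R$ is an automorphism of $\xtar(G) = \ztar(G)$ if and only if $R$ is $X$-irrelevant, i.e.\ $\Z$-irrelevant.

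I do not anticipate any genuine obstacle here, since the argument is purely a matter of instantiating a theorem already proved in full generality. The only point requiring even momentary attention is confirming that $\Z$ satisfies all four conditions of Definition \ref{X-set-param}, but this is standard and is explicitly claimed earlier in the paper, so it may be cited rather than reproved. Consequently the proof will amount to a single sentence: the result is the case $X = \Z$ of Theorem \ref{irrel_isom}.
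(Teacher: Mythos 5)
Your proposal is correct and matches the paper exactly: the paper offers no separate proof for this corollary, presenting it simply as the restatement of Theorem \ref{irrel_isom} for the case $X = \Z$, which is precisely the instantiation you describe. Your additional care in checking that $\Z$ is an $X$-set parameter and that the definitions of $\ztar(G)$, $\Z$-irrelevance, and $\nu_R$ coincide with their general counterparts is sound but not something the paper spells out.
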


We are now ready to state one of our main results, namely that if there is an isomorphism between $X$-TAR graphs that does not preserve the sizes of the $X$-sets, then the difference in size is due to an  $X$-irrelevant set. Moreover, there is an isomorphism between these $X$-TAR graphs that preserves the size of the $X$-sets.

\begin{thm}\label{isomorph-isometry-u} Let $X$ be an $X$-set parameter, let  $G$ and $G'$ be graphs with no isolated vertices, and let  $\tilde\varphi:\xtar(G)\to\xtar(G^\prime)$ be an isomorphism.   Then $R^\prime = V(G^\prime)\setminus \tilde\varphi(V(G))$ is $X$-irrelevant and  $\varphi = \nu_{R^\prime}\circ \tilde\varphi$ is an isomorphism  such that $|\varphi(S)|=|S|$ for every $S\in V(\xtar(G))$.
\end{thm}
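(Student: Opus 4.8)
The plan is to track the top element of the inclusion poset through the given isomorphism. Note that $V(G)$ is the unique maximum element of $(V(\xtar(G)),\subseteq)$: it is an $X$-set by conditions (1) and (4) of Definition \ref{X-set-param}, and it contains every $X$-set of $G$. Set $S' = \tilde\varphi(V(G))$ and $R' = V(G')\setminus S'$. First I would verify that $R'$ is $X$-irrelevant, which is essentially immediate from Lemma \ref{lem:ord_pres}: applied to $\tilde\varphi$, that lemma gives $Y'\subseteq S'$ for every minimal $X$-set $Y'$ of $G'$, so no minimal $X$-set of $G'$ meets $R' = V(G')\setminus S'$. Then Theorem \ref{irrel_isom} shows $\nu_{R'}$ is an automorphism of $\xtar(G')$, and hence $\varphi = \nu_{R'}\circ\tilde\varphi$ is an isomorphism $\xtar(G)\to\xtar(G')$. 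This disposes of the first assertion, and of the claim that $\varphi$ is an isomorphism, with little more than an invocation of the two preceding results.

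Next I would compute the image of the top. Because $R' = V(G')\setminus S'$, we have $S'\cap R' = \emptyset$ and $S'\cup R' = V(G')$, so
\[
\varphi(V(G)) = \nu_{R'}(S') = S'\ominus R' = V(G').
\]
Thus $\varphi$ carries the maximum element of $\xtar(G)$ to the maximum element of $\xtar(G')$, which is the feature that will let distances encode cardinalities.

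The substantive step is proving $|\varphi(S)| = |S|$ for every $X$-set $S$ of $G$, and my approach is through distances to the top. For any $X$-set $S$ of $G$, adding the missing vertices of $V(G)$ to $S$ one at a time produces a path in $\xtar(G)$ of length $|V(G)| - |S|$, since every intermediate set contains $S$ and is therefore an $X$-set by condition (1); as each edge of $\xtar(G)$ changes cardinality by exactly one, no walk from $S$ to $V(G)$ can be shorter, so $\dist(S,V(G)) = |V(G)| - |S|$ (this is also the noted identity $\ell(S,V(G)) = \dist(S,V(G))$). The identical formula holds in $\xtar(G')$. Since $\varphi$ preserves distances and $\varphi(V(G)) = V(G')$, I obtain
\[
|V(G)| - |S| = \dist(\varphi(S), V(G')) = |V(G')| - |\varphi(S)|.
\]
It then remains to eliminate the additive constant $|V(G')| - |V(G)|$, and for this I would use that $\Delta(\xtar(G)) = |V(G)|$ (recorded in the preliminaries) is an isomorphism invariant, giving $|V(G)| = \Delta(\xtar(G)) = \Delta(\xtar(G')) = |V(G')|$. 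Combining the last two displays yields $|\varphi(S)| = |S|$.

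I expect the cardinality computation, not the irrelevance claim, to be the crux. The clean identity $\dist(S,V(G)) = |V(G)| - |S|$ is precisely what converts a bare graph isomorphism---which a priori knows nothing about the sizes of the underlying sets---into quantitative information about cardinalities, and one must separately certify $|V(G)| = |V(G')|$ before that identity forces exact equality of sizes rather than equality up to a shift. The supporting facts (supersets of $X$-sets are $X$-sets, and each TAR edge alters cardinality by one) are routine consequences of Definition \ref{X-set-param}.
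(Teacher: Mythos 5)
Your proof is correct and follows essentially the same route as the paper: the irrelevance of $R'$ comes from Lemma \ref{lem:ord_pres}, the fact that $\nu_{R'}$ is an automorphism from Theorem \ref{irrel_isom}, and the cardinality claim is extracted from $\varphi$ carrying the top element $V(G)$ to $V(G')$ while preserving distances to it. The only difference is cosmetic: where the paper routes the cardinality step through Lemma \ref{interval} and an induction on $|S|$, you invoke the identity $\dist(S,V(G))=|V(G)|-|S|$ directly together with $|V(G)|=|V(G')|$ obtained from $\Delta(\xtar(G))=|V(G)|$, which is a slightly more streamlined path to the same conclusion.
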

\bpf Let $n=|V(G)|$.
If $\tilde\varphi(V(G))=V(G')$, then $R'=\emptyset$ is $X$-irrelevant. So suppose that $\tilde\varphi(V(G)) = S^\prime$, where $S^\prime\not=V(G^\prime)$. Since $\tilde\varphi$ is an isomorphism, Lemma \ref{lem:ord_pres} implies every minimal $X$-set of $G^\prime$ is a subset of $S^\prime$ and hence $R^\prime$ is $X$-irrelevant. By Theorem \ref{irrel_isom}, $\nu_{R^\prime}$ is an automorphism of $\xtar(G^\prime)$. Thus, $\varphi = \nu_{R^\prime}\circ \tilde\varphi$ is an isomorphism such that $\varphi(V(G)) = V(G^\prime)$.

Note that $|V(G')|=n$ and $X(G')=X(G)$. Let $S\in V(\xtar(G))$.  The interval $H=[S,V(G)] \in \xtar(G)$ is an induced hypercube  in $\xtar(G)$, so $\varphi(H)$ is an induced hypercube in $\xtar(G')$.  By Lemma \ref{interval} and since $\varphi(V(G))=V(G')$, $\varphi(H)= [S',V(G')]$ in $\xtar(G')$ and $\dist(S',V(G'))=\dist(S,V(G))$. We show by induction on $|S|$ that $|\varphi(S)|=|S|$.
We say $S$ is a $k$-$X$-set if $S$ is an $X$-set and $|S|=k$.

For the base case, assume $|S|=X(G)$, so $\dist(S',T')=\dist(S,T)=n-X(G)=|V(G')|-X(G')$.  This implies $|S'|=X(G')=X(G)=|S|$. The same reasoning applies using $\varphi^{-1}$, since $\varphi^{-1}(V(G'))=V(G)$. 
Thus $\varphi$ defines a bijection between minimum  $X$-sets of $G$  and  minimum  $X$-sets  of $G'$.

Now assume $\varphi$ defines a bijection between  $i$-$X$-sets of $G$ and  $i$-$X$-sets of $G'$ for $X(G)\le i\le k$ and let $S$ be a $(k+1)$-$X$-set of $G$.  This implies  $|\varphi(W)|\ge k+1$ for $W\in [S,V(G)]$. By Lemma \ref{interval}, $\varphi([S,V(G)])= [S',V(G')]$ in $\xtar(G')$ and $\dist(S',V(G'))=n-k-1$.  Thus $S'$ is a $(k+1)$-$X$-set of $G$.
\epf

We restate Theorem \ref{isomorph-isometry-u} for the specific case of zero forcing.
\begin{cor}\label{cor:isomorph-isometry-u} Let   $G$ and $G'$ be graphs with no isolated vertices and let  $\tilde\varphi:\ztar(G)\to\ztar(G^\prime)$ be an isomorphism.   Then $R^\prime = V(G^\prime)\setminus \tilde\varphi(V(G))$ is $\Z$-irrelevant and  $\varphi = \nu_{R^\prime}\circ \tilde\varphi$ is an isomorphism  such that $|\varphi(S)|=|S|$ for every $S\in V(\ztar(G))$.
\end{cor}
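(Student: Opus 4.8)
The plan is to recognize that this corollary is simply the specialization of Theorem \ref{isomorph-isometry-u} to the choice $X=\Z$, so that essentially no new argument is needed. First I would recall that $\Z(G)$ is an $X$-set parameter, as recorded immediately after Definition \ref{X-set-param}: the zero forcing sets of a graph satisfy conditions (1)--(4) of that definition. Consequently, taking $X=\Z$ gives $\xtar(G)=\ztar(G)$ and $\xtar(G')=\ztar(G')$, so the hypothesized isomorphism $\tilde\varphi:\ztar(G)\to\ztar(G')$ is exactly an isomorphism $\xtar(G)\to\xtar(G')$ of the type handled by Theorem \ref{isomorph-isometry-u}.

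Next I would observe that the two parameter-specific notions appearing in the statement coincide with their general counterparts. The definition of a $\Z$-irrelevant set (Definition \ref{d:irrelevant}) is precisely the definition of an $X$-irrelevant set when $X=\Z$, and the map $\nu_{R'}$ (Definition \ref{d:nuR}) is identical in the two settings since it depends only on the ambient vertex set $V(G')$ and not on the particular property defining the $X$-sets. Hence the two conclusions of Theorem \ref{isomorph-isometry-u}---that $R'=V(G')\setminus\tilde\varphi(V(G))$ is $X$-irrelevant, and that $\varphi=\nu_{R'}\circ\tilde\varphi$ is an isomorphism with $|\varphi(S)|=|S|$ for every $S$---translate directly into the asserted statements about $\Z$-irrelevance and $\ztar$.

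There is no genuine obstacle in this proof: all of the substantive work (the interval structure of induced hypercubes from Lemma \ref{interval}, the order-preservation in Lemma \ref{lem:ord_pres}, and the inductive cardinality argument) is already carried out in the proof of Theorem \ref{isomorph-isometry-u}. The only fact one must verify is that zero forcing fits the $X$-set framework, and this has been noted. I would therefore present the proof as a single invocation of Theorem \ref{isomorph-isometry-u} with $X=\Z$, echoing the remark preceding the statement that this is merely the restatement of that theorem for the zero forcing case.
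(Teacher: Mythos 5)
Your proposal is correct and matches the paper exactly: the paper states this corollary with no separate proof, introducing it as a restatement of Theorem \ref{isomorph-isometry-u} for the case $X=\Z$, which is valid since zero forcing is noted to be an $X$-set parameter immediately after Definition \ref{X-set-param}. Your additional observations that $\Z$-irrelevance and $\nu_{R'}$ specialize correctly are accurate and complete the justification.
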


In the next theorem, we show that if two graphs $G$ and $G'$  (with no isolated vertices)  have isomorphic $X$-TAR graphs, then there is a bijection between  the vertices of  $G$ and the vertices of $G'$   that results in the correspondence of the $X$-sets.   Note this bijection need not be a graph isomorphism between  $G$ and $G'$. A cycle and a cycle plus one edge provide an example of nonisomorphic graphs with the same zero forcing sets (see Proposition \ref{p:Cn+e}).  We first give some  useful notation.  For any map $\psi:A \to A'$ and subset $B\subseteq A$ we write $\psi(B)$ to mean the image of $B$, i.e.,\ $\psi(B) = \{\psi(b) : b\in B\}$. This is particularly useful when working with a map $\psi:V(G)\to V(G')$ that maps $X$-sets of $G$ to $X$-sets of $G'$, since this convention naturally induces a map $\psi:V(\xtar(G))\to V(\xtar(G'))$. For any graph $G$ and $v\in V(G)$, define $S_v=V(G)\setminus\{v\}$. 

\begin{thm}\label{isomorph-same-zfs} 
Let $X$ be an $X$-set parameter, let  $G$ and $G'$ be graphs with no isolated vertices, and suppose $\varphi:\xtar(G)\to \xtar(G')$ is a graph isomorphism. Then  $|\varphi(S)|=|S|$ for every $X$-set $S$ if and only if there exists  a bijection $\psi:V(G)\to V(G')$ such that $\psi(S)=\varphi(S)$ for every $X$-set $S$ of $G$.
\end{thm}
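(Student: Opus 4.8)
The plan is to prove the easy direction first and then build $\psi$ from the top of the $X$-set poset downward. For the \emph{if} direction, suppose such a bijection $\psi$ exists. Since $\psi$ is injective on $V(G)$, its restriction to any $X$-set $S$ is injective, so $|\varphi(S)|=|\psi(S)|=|S|$; this is immediate.

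For the \emph{only if} direction, assume $|\varphi(S)|=|S|$ for every $X$-set $S$. First I would pin down the top of the poset. By conditions (1) and (4) of Definition \ref{X-set-param}, $V(G)$ and every set $S_v=V(G)\setminus\{v\}$ is an $X$-set, and $V(G)$ is the unique $X$-set of cardinality $n:=|V(G)|$. Because $\varphi$ preserves cardinality and is onto, comparing the maximum cardinalities of $X$-sets on the two sides gives $|V(G')|=n$ and $\varphi(V(G))=V(G')$. The size-$(n-1)$ $X$-sets of $G$ are exactly the $n$ sets $S_v$, and likewise for $G'$ the sets $S'_{v'}=V(G')\setminus\{v'\}$; since $\varphi$ preserves cardinality it restricts to a bijection between these two families. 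I would then \emph{define} $\psi:V(G)\to V(G')$ by declaring $\psi(v)$ to be the unique vertex with $\varphi(S_v)=V(G')\setminus\{\psi(v)\}$. This $\psi$ is a bijection, and a one-line check using $\psi(V(G))=V(G')$ confirms $\psi(S_v)=V(G')\setminus\{\psi(v)\}=\varphi(S_v)$, so the desired identity already holds on $V(G)$ and on all $S_v$.

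The core of the argument is to propagate $\psi(S)=\varphi(S)$ down to all $X$-sets by decreasing induction on $|S|$, with $|S|=n,n-1$ as the base cases just established. The key structural fact is that $\varphi$ is inclusion-preserving on covers: if $S$ is an $X$-set and $x\notin S$, then $S$ and $S\cup\{x\}$ are adjacent in $\xtar(G)$, so $\varphi(S)$ and $\varphi(S\cup\{x\})$ are adjacent, and since $|\varphi(S\cup\{x\})|=|\varphi(S)|+1$ this forces $\varphi(S)\subseteq\varphi(S\cup\{x\})$. For an $X$-set $S$ with $|V(G)\setminus S|\ge 2$, I would use the representation $S=\bigcap_{x\notin S}(S\cup\{x\})$. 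Applying the inductive hypothesis to each $S\cup\{x\}$ (an $X$-set of larger cardinality by condition (1)), I get $\varphi(S\cup\{x\})=\psi(S\cup\{x\})=\psi(S)\cup\{\psi(x)\}$; a short set computation, using that the singletons $\{\psi(x)\}$ for distinct $x\notin S$ are distinct, shows $\bigcap_{x\notin S}(\psi(S)\cup\{\psi(x)\})=\psi(S)$. Combining with inclusion-preservation gives $\varphi(S)\subseteq\bigcap_{x\notin S}\varphi(S\cup\{x\})=\psi(S)$, and then cardinality preservation ($|\varphi(S)|=|S|=|\psi(S)|$, the latter because $\psi$ is a bijection) upgrades the inclusion to the equality $\varphi(S)=\psi(S)$.

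The step I expect to be most delicate is exactly this transition from an \emph{inclusion} $\varphi(S)\subseteq\psi(S)$ to the \emph{equality} $\varphi(S)=\psi(S)$: nothing in the hypotheses tells us a priori that $\psi(S)$ is itself an $X$-set of $G'$, so it cannot be identified with $\varphi(S)$ by the interval argument alone. The resolution is precisely the hypothesis $|\varphi(S)|=|S|$, which together with $|\psi(S)|=|S|$ squeezes the two sets together; this is also why the cardinality hypothesis is genuinely needed, as the $\Z$-irrelevant center of a star (Example \ref{star-irrelevant-center}) and the size-changing automorphisms $\nu_{R}$ already illustrate. One could alternatively phrase the inductive step as the statement that $\varphi$, being a cardinality-preserving graph isomorphism, is an isomorphism of the $X$-set posets and hence preserves the meet $S=\bigwedge_{x\notin S}S_x$; the two formulations are interchangeable, but the elementary inductive version avoids invoking meet-semilattice machinery.
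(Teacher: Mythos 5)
Your proposal is correct and follows essentially the same route as the paper: define $\psi$ on vertices via the sets $S_v=V(G)\setminus\{v\}$, then propagate $\psi(S)=\varphi(S)$ downward by induction on $|S|$ using intersections of the covering sets $S\cup\{x\}$. The only cosmetic difference is that the paper intersects just two supersets $S\cup\{a\}$ and $S\cup\{b\}$ and reads off equality directly, whereas you intersect over all $x\notin S$ to get an inclusion and then invoke cardinality preservation to upgrade it to equality; both steps are valid.
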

\bpf 
Let $n=|V(G)|$. For $v\in V(G)$ and  $v'\in V(G')$,  note that $S_v$ is an $X$-set of $G$ and $S_{v'}$ is an $X$-set of $G'$. 

Begin by assuming $|\varphi(S)|=|S|$ for every $X$-set $S$. Since $|\varphi(S_v)|=n-1$ for every $v\in V(G)$, we may define $\psi(v)=v'$ where $v'$ is the unique vertex such that $\varphi(S_v)=S_{v'}$. Note that $\psi:V(G)\to V(G')$ is a bijection.

 The proof that $\psi(S)=\varphi(S)$ for every $X$-set $S$ of $G$ proceeds iteratively from $|S|=n$ to $|S|=1$.  By the choice of $\varphi$ and the definition of $\psi$, we have $\varphi(S)=\psi(S)$ for $|S|=n,n-1$.  Assume $\varphi(S)=\psi(S)$ for each $X$-set $S$ of order $k$ for some $k$ with $n-1\ge k>X(G)$.  Let $S$ be an $X$-set of order $k-1$.   
 Since $k-1\le n-2$, there exist distinct vertices $a,b\in V(G)\setminus S$ such that $S\cup \{a\}$ and $S\cup \{b\}$ are $X$-sets of order $k$. Since $S$ is adjacent to $S\cup\{a\}$ and $S\cup\{b\}$ in $\xtar(G)$, and $|\varphi(S\cup\{a\})|=|\varphi(S\cup\{b\})| > |\varphi(S)|$, there exist distinct $a',b'\in V(G')\setminus \varphi(S)$ such that $\varphi(S\cup\{a\}) = \varphi(S)\cup\{a'\}$ and $\varphi(S\cup\{b\}) = \varphi(S)\cup\{b'\}$. Thus,
 \[
 \varphi(S)=\varphi(S\cup \{a\})\cap \varphi(S\cup \{b\})=\psi(S\cup \{a\})\cap \psi(S\cup \{b\})=\psi(S),
 \]
 where the last equality follows since $\psi$ is a bijection.
 
 Now assume there exists  a bijection $\psi:V(G)\to V(G')$ such that $\psi(S)=\varphi(S)$ for every $X$-set $S$ of $G$. Then $|S| = |\psi(S)| = |\varphi(S)|$ as desired.
 \epf
 
 We restate Theorem \ref{isomorph-same-zfs} for the specific case of zero forcing.
 \begin{cor}\label{cor:isomorph-same-zfs} 
 Let  $G$ and $G'$ be graphs with no isolated vertices, and suppose $\varphi:\ztar(G)\to \ztar(G')$ is a graph isomorphism. Then  $|\varphi(S)|=|S|$ for every zero forcing set $S$ if and only if there exists  a bijection $\psi:V(G)\to V(G')$ such that $\psi(S)=\varphi(S)$ for every  zero forcing set $S$ of $G$.
 \end{cor}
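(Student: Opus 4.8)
The plan is to prove both directions, with the reverse direction being immediate and the forward direction carrying all the content. For the reverse direction, if a bijection $\psi:V(G)\to V(G')$ satisfies $\psi(S)=\varphi(S)$ for every $X$-set $S$, then because a bijection on vertices preserves the cardinality of any vertex subset, $|\varphi(S)|=|\psi(S)|=|S|$ for every $X$-set $S$, as required.

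For the forward direction I would first construct $\psi$ from the sets $S_v=V(G)\setminus\{v\}$. By condition (4) of Definition \ref{X-set-param} each $S_v$ is an $X$-set, and since $\varphi$ preserves cardinality, $\varphi(S_v)$ is an $X$-set of $G'$ of size $n-1$, hence equal to $S_{v'}$ for a unique $v'\in V(G')$; setting $\psi(v)=v'$ gives a well-defined map, which is a bijection because $\varphi$ restricts to a bijection on the $(n-1)$-element $X$-sets. The heart of the argument is then a downward induction on $|S|$ establishing $\varphi(S)=\psi(S)$ (where $\psi(S)$ denotes the image of $S$ under $\psi$). The cases $|S|=n$ and $|S|=n-1$ hold by construction. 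For the inductive step, given an $X$-set $S$ with $|S|=k-1\le n-2$, I would choose two distinct vertices $a,b\in V(G)\setminus S$; by condition (1) of Definition \ref{X-set-param}, both $S\cup\{a\}$ and $S\cup\{b\}$ are $X$-sets of size $k$. Since $\varphi$ is a cardinality-preserving isomorphism, it sends the $\xtar(G)$-edges joining $S$ to $S\cup\{a\}$ and to $S\cup\{b\}$ to $\xtar(G')$-edges, so $\varphi(S\cup\{a\})=\varphi(S)\cup\{a'\}$ and $\varphi(S\cup\{b\})=\varphi(S)\cup\{b'\}$ for some $a',b'$, with $a'\neq b'$ because injectivity of $\varphi$ forces $\varphi(S\cup\{a\})\neq\varphi(S\cup\{b\})$. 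Consequently $\varphi(S)=\varphi(S\cup\{a\})\cap\varphi(S\cup\{b\})$, and applying the inductive hypothesis together with the fact that a bijection commutes with intersections yields $\varphi(S)=\psi(S\cup\{a\})\cap\psi(S\cup\{b\})=\psi\bigl((S\cup\{a\})\cap(S\cup\{b\})\bigr)=\psi(S)$.

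The step I expect to be the main obstacle is precisely this inductive identity $\varphi(S)=\varphi(S\cup\{a\})\cap\varphi(S\cup\{b\})$. It depends on cardinality preservation, which guarantees that each image is exactly one vertex larger than $\varphi(S)$, and on the distinctness $a'\neq b'$, which prevents the two extensions from adding the same element; with both secured, the intersection collapses cleanly to $\varphi(S)$ and the bijection $\psi$ transports the elementary relation $(S\cup\{a\})\cap(S\cup\{b\})=S$ to the image side. A minor supporting point to check is that two distinct extending vertices always exist, which is guaranteed by $|S|\le n-2$.
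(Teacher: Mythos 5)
Your proof is correct and follows essentially the same route as the paper: the reverse direction is immediate from $\psi$ being a bijection, and the forward direction defines $\psi$ via the sets $S_v=V(G)\setminus\{v\}$ and then runs the same downward induction using two distinct extending vertices $a,b$ and the identity $\varphi(S)=\varphi(S\cup\{a\})\cap\varphi(S\cup\{b\})$. No gaps.
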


Theorem \ref {isomorph-same-zfs} also shows that if the domination (respectively,  power domination, PSD zero forcing) TAR reconfiguration graphs of $G$ and $G'$ are isomorphic, then $G'$ can be relabeled so that $G$ and $G'$ have exactly the same dominating (respectively,  power dominating, positive semidefinite  zero forcing) sets.

The next two results focus on mappings and minimal $X$-sets.

\begin{prop}\label{minimal-u} Let $X$ be an $X$-set parameter and let $G$ and $G'$ be graphs with no isolated vertices.   
 Suppose $\varphi:\xtar(G)\to \xtar(G')$ is a graph isomorphism  such that $|\varphi(S)|=|S|$. Then $\varphi$ maps minimal $X$-sets to minimal $X$-sets (of the same size).
\end{prop}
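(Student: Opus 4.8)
The plan is to reduce minimality to a purely combinatorial property of the TAR graph that the isomorphism manifestly preserves. First I would record the following characterization: an $X$-set $S$ of $G$ is \emph{minimal} if and only if $S$ has no neighbor of smaller cardinality in $\xtar(G)$. Indeed, every neighbor of $S$ in $\xtar(G)$ differs from $S$ in exactly one element and so has cardinality either $|S|+1$ or $|S|-1$; the neighbors of cardinality $|S|+1$ are the supersets $S\cup\{x\}$ with $x\notin S$ (always $X$-sets by condition (1) of Definition \ref{X-set-param}), while the neighbors of cardinality $|S|-1$ are precisely the sets $S\setminus\{x\}$ with $x\in S$ that happen to be $X$-sets. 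Thus $S$ fails to be minimal exactly when some $S\setminus\{x\}$ is an $X$-set, i.e.\ exactly when $S$ has a neighbor of smaller cardinality in $\xtar(G)$.

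Next I would observe that both $\varphi$ and $\varphi^{-1}$ preserve cardinality: the hypothesis gives $|\varphi(S)|=|S|$ for every $X$-set $S$, and applying this to $S=\varphi^{-1}(T)$ yields $|\varphi^{-1}(T)|=|T|$ for every $X$-set $T$ of $G'$. Now let $S$ be a minimal $X$-set of $G$ and set $T=\varphi(S)$, so $|T|=|S|$. If $T$ were not minimal, then by the characterization it would have a neighbor $T'$ in $\xtar(G')$ with $|T'|=|T|-1$. Since $\varphi$ is a graph isomorphism, $\varphi^{-1}(T')$ is a neighbor of $S$ in $\xtar(G)$, and it has cardinality $|T'|=|S|-1$. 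This contradicts the minimality of $S$ via the characterization. Hence $T=\varphi(S)$ is a minimal $X$-set of $G'$, and $|T|=|S|$ records that it has the same size.

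There is essentially no hard computational step here; the only thing to get right is the TAR-graph characterization of minimality, and in particular the asymmetry that supersets of an $X$-set are automatically $X$-sets while subsets need not be. Once that characterization is in hand, the conclusion is immediate: an isomorphism that preserves cardinality transports the property ``has no neighbor of smaller cardinality'' in both directions, so it carries minimal $X$-sets to minimal $X$-sets of the same size.
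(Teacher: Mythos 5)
Your proof is correct and takes essentially the same approach as the paper: the paper detects minimality via the degree count $\deg_{\xtar(G)}(S)=n-|S|$ for minimal $S$ versus $\ge n-|S|+1$ otherwise, while you phrase the same fact directly as ``no neighbor of smaller cardinality,'' and both arguments then transport this property through the cardinality-preserving isomorphism. Your version is slightly more self-contained in that it does not implicitly rely on $|V(G)|=|V(G')|$, but the underlying idea is identical.
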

\bpf If $S$ is minimal $X$-set of $G$, then   $\deg_{\xtar(G)}(S)=n-|S|$.  If $T$ is an $X$-set of $G$ that is not minimal  in $\xtar(G)$, then   $\deg_{\xtar(G)}(T)\ge n-|T|+1$ (since $T$ has a subset neighbor).  Analogous statements are true for $G'$.
\epf

\begin{thm}\label{isomorph-minimal-u} Let $X$ be an $X$-set parameter and let $G$ and $G'$ be graphs with no isolated vertices.   Suppose $\psi:V(G)\to V(G')$ is a bijection.
\ben[$(1)$]
\item\label{c3}  Suppose $\psi$ maps  $X$-sets of $G$ to  $X$-sets of $G'$. Then the induced mapping $\psi:V(\xtar(G))\to V(\xtar(G'))$ is an isomorphism of $\xtar(G)$ and $\psi(\xtar(G))$.   If every  $X$-set of $G'$ is the image of an $X$-set of $G$, then $\psi:V(\xtar(G))\to V(\xtar(G'))$ is an isomorphism of $\xtar(G)$ and $\xtar(G')$.
\item\label{c2} Suppose $\psi$  maps minimal $X$-sets of $G$ to minimal $X$-sets of $G'$.  Then  $\psi$ maps $X$-sets of $G$ to $X$-sets of $G'$. 
If every minimal $X$-set of $G'$ is the image of a minimal $X$-set of $G$, then  $\psi$ is a bijection from $X$-sets of $G$ to $X$-sets of $G'$. 
\item\label{c4} Suppose $\psi$  maps minimal $X$-sets of $G$ to minimal $X$-sets of $G'$.   Then the induced mapping $\psi:V(\xtar(G))\to V(\xtar(G'))$ is an isomorphism of $\xtar(G)$ and $\psi(\xtar(G))$.   If every minimal  $X$-set of $G'$ is the image of a minimal $X$-set of $G$, then $\psi:V(\xtar(G))\to V(\xtar(G'))$ is an isomorphism of $\xtar(G)$ and $\xtar(G')$.\een
\end{thm}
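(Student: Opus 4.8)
The plan is to prove the three parts in the order (1), (2), (3), since part (3) is essentially part (2) followed by part (1). Throughout I would lean on two elementary facts about the bijection $\psi$ acting on subsets via the convention $\psi(B)=\{\psi(b):b\in B\}$. First, because $\psi:V(G)\to V(G')$ is injective on vertices, it is injective on subsets and preserves cardinality, so $|\psi(S)|=|S|$. Second, because $\psi$ is a bijection, it commutes with symmetric difference, that is $\psi(S_1)\ominus\psi(S_2)=\psi(S_1\ominus S_2)$, and hence $|S_1\ominus S_2|=|\psi(S_1)\ominus\psi(S_2)|$. This second fact is the engine behind every adjacency claim below.

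For part (1), I would assume $\psi$ sends $X$-sets to $X$-sets, so that the induced map $\psi:V(\xtar(G))\to V(\xtar(G'))$ is well defined. It is injective by the first fact, and by the second fact two $X$-sets $S_1,S_2$ satisfy $|S_1\ominus S_2|=1$ if and only if $|\psi(S_1)\ominus\psi(S_2)|=1$; thus $\psi$ both preserves and reflects adjacency, so it is a graph isomorphism of $\xtar(G)$ onto the subgraph $\psi(\xtar(G))$ of $\xtar(G')$. If in addition every $X$-set of $G'$ is an image, then $\psi(\xtar(G))=\xtar(G')$ and the same map becomes an isomorphism of the full TAR graphs.

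For part (2), I would use condition (1) of Definition \ref{X-set-param}, that supersets of $X$-sets are $X$-sets, together with the fact that every $X$-set contains a minimal one. Given an $X$-set $S$ of $G$, I would pick a minimal $X$-set $T\subseteq S$; by hypothesis $\psi(T)$ is a minimal (hence an) $X$-set of $G'$, and since $\psi$ is injective $\psi(T)\subseteq\psi(S)$, so $\psi(S)$ is an $X$-set by superset-closure. For the second claim I would pass to $\psi^{-1}$: given an $X$-set $S'$ of $G'$, choose a minimal $X$-set $T'\subseteq S'$, write $T'=\psi(T)$ for some minimal $X$-set $T$ of $G$ using the surjectivity hypothesis, and set $S=\psi^{-1}(S')$. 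Then $T\subseteq S$ forces $S$ to be an $X$-set of $G$ with $\psi(S)=S'$, giving surjectivity; with injectivity this yields the asserted bijection.

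For part (3), I would simply combine the two previous parts: part (2) shows $\psi$ maps $X$-sets to $X$-sets, so part (1) supplies the isomorphism onto $\psi(\xtar(G))$; and if every minimal $X$-set of $G'$ is an image, the second statement of part (2) shows every $X$-set of $G'$ is an image, so the second statement of part (1) upgrades this to a full isomorphism $\xtar(G)\cong\xtar(G')$. The one step requiring genuine care, and the closest thing to an obstacle, is the surjectivity argument in part (2): since the hypothesis constrains only minimal $X$-sets, the reduction to minimal sets through the superset-closure axiom must be carried out once for $\psi$ and once for $\psi^{-1}$, and this double reduction is precisely what bridges from minimal $X$-sets to arbitrary $X$-sets.
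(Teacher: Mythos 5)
Your proof is correct and follows essentially the same route as the paper's: adjacency in the TAR graph is preserved and reflected because a vertex bijection preserves cardinalities of symmetric differences, part (2) reduces to a minimal $X$-set inside a given $X$-set together with the superset-closure axiom, and part (3) is the composition of the other two. If anything, you are slightly more complete than the paper, which leaves the surjectivity claim in part (2) (the passage through $\psi^{-1}$) implicit.
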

\bpf 
 
\eqref{c3}: Since $\psi$ maps $X$-sets of $G$ to $X$-sets of $G'$,   $\psi$ induces a bijection between the vertices of $\xtar(G)$ and a subset of the vertices of  $\xtar(G')$ ($X$-sets of $G'$ of the form $\psi(S)$ where $S$ is an $X$-set of $G$). Assume that $S_1,S_2\in V(\xtar(G))$ are adjacent in $\xtar(G)$. Without loss of generality, $|S_1\setminus S_2|=1$. Since $\psi$ is a bijection, $|\psi(S_1)\setminus \psi(S_2)| = 1$. Thus, $\psi(S_1)$ and $\psi(S_2)$ are adjacent in $\xtar(G')$. Hence $\psi$ is an isomorphism from $\xtar(G)$ to $\psi(\xtar(G))$.

\eqref{c2}:
Let $S\in V(\xtar(G))$ be an $X$-set. There is a minimal $X$-set  $T\subseteq S$ of $G$ and $\psi(T)\subseteq \psi(S)$. Since $\psi(T)$ is a minimal $X$-set  of $G'$, $\psi(S)$ is an $X$-set   of $G'$.

Statement \eqref{c4} is immediate from statements \eqref{c2} and \eqref{c3}
\epf

It is possible to have a vertex bijection $\psi$ that maps minimal  zero forcing sets of $G$ to minimal zero forcing sets of $G'$ but not vice versa, as the next example shows (using the identity function as $\psi$).

\begin{ex} Consider the graph $K_{1,3}$ with vertices $\{0,1,2,3\}$ where 0 is the center vertex, and the paw graph $P$ constructed from $K_{1,3}$ by adding the edge $23$.  The minimal zero forcing sets of $K_{1,3}$ are $\{1,2\}$, $\{1,3\}$, and $\{2,3\}$. The minimal zero forcing sets of $P$ are $\{0,2\}$, $\{0,3\}$, $\{1,2\}$, $\{1,3\}$, and  $\{2,3\}$.
\end{ex}

	\begin{figure}[h!]
    \centering
    \scalebox{1}{
    		\begin{tikzpicture}[scale=.7,every node/.style={draw,shape=circle,outer sep=2pt,inner sep=1pt,minimum
			size=.2cm}]
		
		\node[fill=black]  (00) at (2,1) {};
		\node[fill=black]  (01) at (-2,-0.5) {};
		\node[fill=black]  (02) at (0, -0.5) {};
		\node[fill=black]  (03) at (4,-0.5) {};
		\node[fill=black]  (04) at (-2,1) {};
		\node[fill=black]  (05) at (0,1) {};
		\node[fill=black]  (06) at (4,1) {};
		\node[fill=black]  (07) at (2,2.5) {};
		
		\node[draw=none] at (2, 1.5){$\small{\{1,2,3\}}$};
		\node[draw=none] at (-2,-1){$\small{\{1,2\}}$};
		\node[draw=none] at (0,-1){$\small{\{2,3\}}$};
		\node[draw=none] at (4,-1){$\small{\{1,3\}}$};

		\node[draw=none] at (-2, 1.5){$\small{\{0,1,2\}}$};
		\node[draw=none] at (0, 1.5){$\small{\{0,2,3\}}$};
		\node[draw=none] at (4, 1.5){$\small{\{0,1,3\}}$};
		\node[draw=none] at (2, 2.8){$\small{\{0,1,2,3\}}$};

		\draw[thick](00)--(03)--(06)--(07)--(04)--(01)--(00)--(02)--(05)--(07)--(00);
		\end{tikzpicture}}
		\scalebox{1}{	\begin{tikzpicture}[scale=.7,every node/.style={draw,shape=circle,outer sep=2pt,inner sep=1pt,minimum
			size=.2cm}]
		
		\node[fill=black]  (00) at (2,1) {};
		\node[fill=black]  (01) at (-2.8,-0.5) {};
		\node[fill=black]  (02) at (0, -0.5) {};
		\node[fill=black]  (03) at (4,-0.5) {};
		\node[fill=black]  (04) at (-2.8,1) {};
		\node[fill=black]  (05) at (0,1) {};
		\node[fill=black]  (06) at (4,1) {};
		\node[fill=black]  (07) at (2,2.5) {};
		
		\node[fill=black]  (08) at (-1.4,-0.5) {};
		\node[fill=black]  (09) at (2,-0.5) {};
		
		\node[draw=none] at (2, 1.5){$\small{\{1,2,3\}}$};
		\node[draw=none] at (-2.8,-1){$\small{\{1,2\}}$};
		\node[draw=none] at (0,-1){$\small{\{2,3\}}$};
		\node[draw=none] at (4,-1){$\small{\{1,3\}}$};
		\node[draw=none] at (-1.4,-1){$\small{\{0,2\}}$};
		\node[draw=none] at (2,-1){$\small{\{0,3\}}$};
		
		\node[draw=none] at (-2.8, 1.5){$\small{\{0,1,2\}}$};
		\node[draw=none] at (0, 1.5){$\small{\{0,2,3\}}$};
		\node[draw=none] at (4, 1.5){$\small{\{0,1,3\}}$};
		\node[draw=none] at (2, 2.8){$\small{\{0,1,2,3\}}$};

		\draw[thick](00)--(03)--(06)--(07)--(04)--(01)--(00)--(02)--(05)--(07)--(00);
		\draw[dashed] (06)--(09)--(05)--(08)--(04);
		\end{tikzpicture}}
    \caption{$\ztar(K_{1,3})$ and $\ztar(P)$ }
    \label{ztarK13ztarpaw}
\end{figure}


\subsection{$X$-irrelevant vertices and automorphisms of $X$-TAR graphs.}\label{ss:irrelevant}

In this section we explore the existence (or non-existence) of  $X$-irrelevant vertices for various $X$-set parameters and consequences for automorphisms of $X$-TAR graphs.
The following proposition is an application of  Theorems \ref{irrel_isom} and \ref{isomorph-isometry-u}.

\begin{prop}
Let $G$ be a graph with no isolated vertices. If there exists an automorphism $\varphi\in\text{aut}(G)$ that does not fix $V(G)$, then $\xtar(G)$ has a perfect matching.
\end{prop}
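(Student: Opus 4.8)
The plan is to first pin down the hypothesis. I read it as: there is an automorphism $\varphi$ of the reconfiguration graph $\xtar(G)$ that does not fix its top vertex $V(G)$, i.e.\ $\varphi(V(G))\neq V(G)$. (The literal reading ``$\varphi\in\text{aut}(G)$ with $\varphi\neq\mathrm{id}$'' cannot be what is meant: $K_3$ has nontrivial automorphisms, yet $\ztar(K_3)=K_{1,3}$ is a star on four vertices, which has no perfect matching. Moreover any $\varphi\in\text{aut}(G)$ induces an automorphism of $\xtar(G)$ fixing the vertex $V(G)$, so only automorphisms of $\xtar(G)$ itself can move it.) Under this reading the result is exactly an application of the two cited theorems, as advertised. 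First I would apply Theorem \ref{isomorph-isometry-u} with $G'=G$ and $\tilde\varphi=\varphi$: since $G$ has no isolated vertices and $\varphi$ is an isomorphism $\xtar(G)\to\xtar(G)$, the set $R=V(G)\setminus\varphi(V(G))$ is $X$-irrelevant.

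The key point is that this $R$ is \emph{nonempty}. Indeed, $\varphi(V(G))$ is a vertex of $\xtar(G)$ and hence an $X$-set, so it is a subset of $V(G)$; the assumption $\varphi(V(G))\neq V(G)$ therefore makes it a proper subset and forces $R\neq\emptyset$. Next I would fix any $v\in R$. Because every vertex of an $X$-irrelevant set is $X$-irrelevant, the singleton $\{v\}$ is an $X$-irrelevant set, so Theorem \ref{irrel_isom} shows that $\nu_{\{v\}}$ is an automorphism of $\xtar(G)$; in particular $\nu_{\{v\}}$ carries every $X$-set $S$ to the $X$-set $S\ominus\{v\}$.

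Finally I would verify that $\nu_{\{v\}}$ exhibits the desired matching. The map is a fixed-point-free involution on $V(\xtar(G))$: one has $\nu_{\{v\}}\circ\nu_{\{v\}}=\mathrm{id}$, and $S\ominus\{v\}\neq S$ for every $S$. Moreover $S$ and $S\ominus\{v\}$ differ in exactly the one element $v$, so they are adjacent in $\xtar(G)$. Hence $\{\,\{S,\,S\ominus\{v\}\}:S\in V(\xtar(G))\,\}$ partitions the vertex set of $\xtar(G)$ into edges, which is precisely a perfect matching. The only real obstacle is the first step---using Theorem \ref{isomorph-isometry-u} to convert ``$\varphi$ moves the top vertex'' into a \emph{nonempty} $X$-irrelevant set---after which the perfect matching is just the observation that a single $X$-irrelevant vertex induces a matching involution via $\nu_{\{v\}}$.
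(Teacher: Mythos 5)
Your proof is correct and follows essentially the same route as the paper: apply Theorem \ref{isomorph-isometry-u} to conclude that $V(G)\setminus\varphi(V(G))$ is a nonempty $X$-irrelevant set, pick a single vertex $v$ from it, and observe that $\nu_{\{v\}}$ is a fixed-point-free involutive automorphism of $\xtar(G)$ moving each $X$-set to an adjacent one, hence a perfect matching. Your reading of the hypothesis (that $\varphi$ is an automorphism of $\xtar(G)$ not fixing the vertex $V(G)$, despite the statement's ``$\varphi\in\text{aut}(G)$'') is exactly how the paper's own proof treats it, and your $K_3$ remark correctly identifies why the literal reading cannot be intended.
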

\bpf 
Suppose that there exists an automorphism $\varphi\in\text{aut}(G)$ that does not fix $V(G)$. By Theorem \ref{isomorph-isometry-u}, $V(G)\setminus \varphi(V(G))$ is $X$-irrelevant. Let $R$ be any single element subset of $V(G)\setminus \varphi(V(G))$. Then $R$ is $X$-irrelevant and by Theorem \ref{irrel_isom} $\nu_{R}$ is an automorphism of $\xtar(G)$. Observe that no vertex is fixed by $\nu_{R}$ and $\nu_{R}^2$ is the identity map. Thus, $\nu_{R}$ induces a perfect matching on $\xtar(G)$.
\epf

 Let $M_X(G)$ denote the set of bijections 
 $\psi:V(G)\to V(G)$ that send minimal $X$-sets of $G$ to minimal $X$-sets of $G$ of the same size.

\begin{thm}
Let $G$ be a graph with no isolated vertices. Then the automorphism group of $\xtar(G)$ is generated by
\[
\{\nu_R : R\text{ is $X$-irrelevant}\}\cup M_X(G).
\]
\end{thm}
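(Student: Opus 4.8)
The plan is to establish the two inclusions required for a generating-set statement: first, that every listed map genuinely is an automorphism of $\xtar(G)$, and second, that every automorphism of $\xtar(G)$ factors as a composition of such maps. The second (decomposition) direction is where the previously established machinery does most of the work, so I would begin there and treat the first direction as a supporting check.

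For the decomposition, let $\tilde\varphi\in\text{aut}(\xtar(G))$ be arbitrary and apply Theorem \ref{isomorph-isometry-u} with $G'=G$: the set $R=V(G)\setminus\tilde\varphi(V(G))$ is $X$-irrelevant and $\varphi:=\nu_R\circ\tilde\varphi$ preserves the cardinality of every $X$-set. Since $\nu_R$ is an involution (indeed $\nu_R(\nu_R(S))=(S\ominus R)\ominus R=S$), this rearranges to $\tilde\varphi=\nu_R\circ\varphi$. Now $\varphi$ is a size-preserving automorphism, so Theorem \ref{isomorph-same-zfs} produces a vertex bijection $\psi:V(G)\to V(G)$ with $\psi(S)=\varphi(S)$ for every $X$-set $S$; in other words, the map induced by $\psi$ on $\xtar(G)$ is exactly $\varphi$. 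By Proposition \ref{minimal-u}, $\varphi$ sends minimal $X$-sets to minimal $X$-sets of the same size, and since $\psi$ and $\varphi$ agree on all $X$-sets, $\psi$ does the same; hence $\psi\in M_X(G)$. Therefore $\tilde\varphi=\nu_R\circ\psi$ exhibits $\tilde\varphi$ as a product of the two types of generators.

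It remains to confirm that each generator really is an automorphism, so that the subgroup they generate sits inside $\text{aut}(\xtar(G))$. For $\nu_R$ with $R$ $X$-irrelevant this is exactly Theorem \ref{irrel_isom}. For $\psi\in M_X(G)$ I would invoke Theorem \ref{isomorph-minimal-u}\eqref{c4} with $G'=G$, whose hypothesis requires that every minimal $X$-set of $G$ be the image under $\psi$ of a minimal $X$-set. This surjectivity is the one point needing a short separate argument: $\psi$ maps the finite collection of minimal $X$-sets of each fixed cardinality $k$ injectively into itself (it preserves both minimality and size, and is injective on sets since it is a vertex bijection), and an injection of a finite set into itself is a bijection, so every minimal $X$-set of size $k$ is attained. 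With surjectivity in hand, Theorem \ref{isomorph-minimal-u}\eqref{c4} gives that the induced map $\psi:V(\xtar(G))\to V(\xtar(G))$ is an automorphism.

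I expect the main obstacle to be conceptual rather than technical: keeping straight the distinction between a vertex bijection $\psi$ of $G$ and the automorphism of $\xtar(G)$ it induces, and verifying that the generating set is well defined in the sense that every element of $M_X(G)$ genuinely acts as an automorphism (the finiteness/surjectivity step above). The decomposition itself is essentially forced once Theorems \ref{isomorph-isometry-u} and \ref{isomorph-same-zfs} and the involutivity of $\nu_R$ are combined, so I anticipate no difficulty there beyond careful bookkeeping of inverses.
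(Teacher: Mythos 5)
Your proposal is correct and follows essentially the same route as the paper: reduce an arbitrary automorphism to a size-preserving one via Theorem \ref{isomorph-isometry-u}, extract the vertex bijection via Theorem \ref{isomorph-same-zfs}, place it in $M_X(G)$ via Proposition \ref{minimal-u}, and verify the generators are automorphisms via Theorems \ref{irrel_isom} and \ref{isomorph-minimal-u}\eqref{c4}. Your explicit finiteness argument for the surjectivity hypothesis of Theorem \ref{isomorph-minimal-u}\eqref{c4} is a detail the paper only asserts parenthetically, and your use of the involutivity of $\nu_R$ in place of the paper's $\nu_R^{-1}$ is an equivalent cosmetic difference.
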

\bpf
By Theorem \ref{irrel_isom} $\nu_R\in\text{aut}(\xtar(G))$ for every $X$-irrelevant set $R$. 
By Theorem \ref{isomorph-minimal-u}\eqref{c4}, $\psi\in\text{aut}(\xtar(G))$ for every  $\psi\in M_X(G)$ (since $G'=G$ here, having $\psi$ map minimal $X$-sets to minimal $X$-sets is sufficient).

We now show that $\{\nu_R : R\text{ is $X$-irrelevant}\}\cup M_X(G)$ generates aut$(\xtar(G))$. Let $\varphi$ by an automorphism of $\xtar(G)$. Suppose that $V(G)$ is fixed by $\xtar(G)$. Then $|\varphi(S)| = |S|$ for each $S\in V(\xtar(G))$. By Theorem \ref{isomorph-same-zfs} there exists a bijection $\psi:V(G)\to V(G)$ such that $\psi(S) = \varphi(S)$ for every $X$-set $S$ of $G$. By Proposition \ref{isomorph-minimal-u}, $\psi$ maps minimal $X$-sets to minimal $X$-sets of the same size.

Suppose that $V(G)$ is not fixed by $\varphi$. By Theorem \ref{isomorph-isometry-u} and the preceding argument, there exists a bijection $\psi\in M_X(G)$ such that $\psi = \nu_{R}\circ\varphi$, where $R=V(G)\setminus \varphi(G)$. Thus, $\varphi = \nu_R^{-1}\circ\psi$.
\epf

\begin{ex}
Consider $P_4$ with vertices $1,2,3,4$ and edges 
$12,23,34$. Define $\psi:V(G)\to V(G)$ by $\psi(1)=4$, $\psi(2)=2$, $\psi(3)=3$ and $\psi(1)= 4$.  The minimal zero forcing sets are $\{1\}$, $\{4\}$, and $\{2,3\}$, so $\psi$ maps minimal forcing sets to minimal forcing sets.  Thus $\psi$ defines  an automorphism of $\ztar(P_4)$.  However, $\psi$  is \emph{not} an automorphism of $P_4$. 
\end{ex}

The next result provides many examples of graphs with nonempty $\Z$-irrelevant sets (and includes $K_{1,r}$ discussed in Example \ref{star-irrelevant-center}).   Let $H$ and $G$ be graphs, and for each $v\in V(H)$, let $G_v$ denote a copy of $G$ such that $H$ and $G_v, v\in V(H)$ are all disjoint graphs. The \emph{corona} of $H$ with $G$, denoted by $H\circ G$, has $V(H\circ G)=V(H)\cup\bigcup_{v\in V(H)}V(G_v)$ and $E(H\circ G)=E(H)\cup\bigcup_{v\in V(H)}E(G_v)\cup\bigcup_{v\in V(H)}\{vx:x\in V(G_v)\}$.  The graph $K_3\circ 2K_1$ is shown in Figure \ref{fig:K3circ2K1}.

\begin{prop}
Let $H$ be a connected graph and $G=H\circ rK_1$ with $r\ge 2$.  Then $T\subseteq V(G)$ is a $\Z$-irrelevant set of $G$ if and only if $T\subseteq V(H)$.
\end{prop}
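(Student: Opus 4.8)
The plan is to pin down the $\Z$-irrelevant vertices of $G=H\circ rK_1$ as exactly the vertices of $H$; the set statement then follows at once, since a set is $\Z$-irrelevant precisely when each of its vertices is, so $T$ is $\Z$-irrelevant if and only if $T\subseteq V(H)$. Write $V(H)=\{v_1,\dots,v_m\}$ and let $L_i$ denote the set of $r$ leaves attached to $v_i$, so that each vertex of $L_i$ has $v_i$ as its unique neighbor. The whole argument rests on one preliminary observation: every zero forcing set $S$ of $G$ satisfies $|S\cap L_i|\ge r-1$ for every $i$. Indeed, since each leaf in $L_i$ has $v_i$ as its only neighbor, the only vertex that can ever force a leaf of $L_i$ is $v_i$, and $v_i$ can do so only when exactly one leaf of $L_i$ is white. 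Hence if two or more leaves of $L_i$ start white, the number of white leaves of $v_i$ can never drop below two (decreasing it would require forcing a leaf while at least two remain white), so these leaves are never all turned blue, contradicting that $S$ is a zero forcing set.

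For the backward direction I would show each $v_i$ is $\Z$-irrelevant by proving that whenever $S$ is a zero forcing set with $v_i\in S$, the set $S\setminus\{v_i\}$ is again a zero forcing set. By the preliminary observation $S$ contains a leaf $\ell\in L_i$; starting from $S\setminus\{v_i\}$, the blue leaf $\ell$ has $v_i$ as its unique white neighbor and so forces $v_i$, after which the blue set is exactly $S$ and the remaining forces of $S$ finish the process. Consequently $v_i$ lies in no minimal zero forcing set, i.e.\ $v_i$ is $\Z$-irrelevant, so every $T\subseteq V(H)$ is $\Z$-irrelevant.

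For the forward direction I would show no leaf is $\Z$-irrelevant by exhibiting, for each leaf $\ell_{i,j}\in L_i$, a minimal zero forcing set containing it. Choose for each $k$ one leaf $\ell_k^\ast\in L_k$ to omit, taking $\ell_i^\ast\ne\ell_{i,j}$, and set $S_0=\bigcup_k\bigl(L_k\setminus\{\ell_k^\ast\}\bigr)$. Then $S_0$ is a zero forcing set: each of the $r-1\ge 1$ blue leaves at $v_k$ forces $v_k$, and once every $v_k$ is blue its unique remaining white neighbor $\ell_k^\ast$ is forced. It is minimal because deleting any $\ell_{k,t}\in S_0$ leaves $v_k$ with the two white leaves $\ell_{k,t}$ and $\ell_k^\ast$, which by the preliminary observation can never be forced. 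Since $\ell_{i,j}\in S_0$ and $S_0$ is a minimal zero forcing set, $\ell_{i,j}$ is not $\Z$-irrelevant, which completes the characterization.

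The main obstacle is the preliminary deadlock observation, which drives both directions and must be phrased to hold for all $r\ge 2$; in particular it covers the $r=2$ case of the minimality argument, where deleting the single retained leaf at $v_k$ leaves both leaves of $v_k$ white. Once that observation is secured, the removability of each $v_i$ and the construction and minimality of $S_0$ are routine. I note that connectedness of $H$ is not essential to this argument, since the forcing from $S_0$ proceeds leaf-by-leaf and never needs to propagate through the edges of $H$.
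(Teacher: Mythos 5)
Your proof is correct and follows essentially the same route as the paper: both arguments hinge on the characterization that $S$ is a zero forcing set of $H\circ rK_1$ exactly when it contains at least $r-1$ leaves at each vertex of $H$, from which the minimal zero forcing sets (and hence the $\Z$-irrelevant vertices) are read off. You simply supply in full the details — the deadlock observation, the removability of each $v_i$, and the explicit minimal set $S_0$ — that the paper's terse proof leaves implicit.
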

\bpf Let $V(H)=\{v_1,\dots,v_k\}$ and denote the leaves adjacent to $v_i$ by $x_{i,j}$ for $j=1,\dots,r$.  Then $S\subseteq V(G)$ is a  zero forcing set of $G$ if and only if $S$ contains at least $r-1$ of the vertices $x_{i,1},\dots,x_{i,r}$ for all  $i=1,\dots,k$.  Thus $T\subseteq V(G)$ is $\Z$-irrelevant set if and only if $T\subseteq V(H)$.
\epf

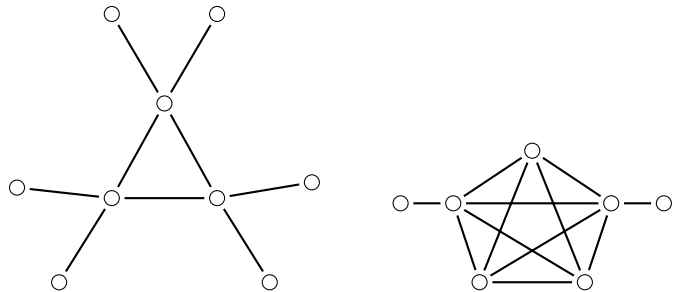
\begin{figure}[!h]
    \centering
    \scalebox{1}{
    \begin{tikzpicture}[scale=.7,every node/.style={draw,shape=circle,outer sep=2pt,inner sep=1pt,minimum
			size=.2cm}]
		
		\node[fill=none]  (00) at (-2.8,0.2) {};
		\node[fill=none]  (01) at (-1,0) {};
		\node[fill=none]  (02) at (1,0) {};
		\node[fill=none]  (03) at (2.8,0.3) {};
		\node[fill=none]  (04) at (0,1.8) {};
		\node[fill=none]  (05) at (-1,3.5) {};
		\node[fill=none]  (06) at (1,3.5) {};
		\node[fill=none]  (07) at (-2,-1.6) {};
		\node[fill=none]  (08) at (2,-1.6) {};
		
		\draw[thick] (05)--(04)--(02)--(08);
		\draw[thick] (06)--(04)--(01)--(07);
		\draw[thick](00)--(01)--(02)--(03);
		\end{tikzpicture}}
		\qquad
		\scalebox{1}{\begin{tikzpicture}[scale=.7,every node/.style={draw,shape=circle,outer sep=2pt,inner sep=1pt,minimum
			size=.2cm}]
		
		\node[fill=none]  (0) at (-1.5,2) {};
		\node[fill=none]  (1) at (0,3) {};
		\node[fill=none]  (2) at (1.5,2) {};
		\node[fill=none]  (3) at (1,0.5) {};
		\node[fill=none]  (4) at (-1,0.5) {};
		\node[fill=none]  (5) at (-2.5,2) {};
        \node[fill=none]  (6) at (2.5,2) {};
		
		\draw[thick] (0)--(1)--(2)--(3)--(4)--(0)--(2)--(4)--(1)--(3)--(0);
		\draw[thick] (0)--(5);
	    \draw[thick] (2)--(6);
		\end{tikzpicture}}
    \caption{The graphs $K_3\circ 2K_1$ and $G(5,2)$ }
    \label{fig:K3circ2K1}
\end{figure}

 Next we apply the results in Section \ref{ss:iso-level} to other $X$-set parameters.
The power domination TAR reconfiguration graph of a graph $G$ was introduced  in \cite{PD-recon}. We define a family  of graphs with nonempty $\pd$-irrelevant sets. For $r\ge 3$ and $1\le \ell \le r-2$, construct $G(r,\ell)$  from $K_r\circ K_1$ by deleting $r-\ell$ leaves.  

\begin{prop}
For $r\ge 3$ and $1\le \ell\le r-2$, let $L$ denote the set of leaves of $G(r,\ell)$.  Then $T\subseteq V(G(r,\ell))$ is a $\pd$-irrelevant set of $G(r,\ell)$ if and only if $T\subseteq L$. 
\end{prop}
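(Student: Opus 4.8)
The plan is to characterize the minimal power dominating sets of $G(r,\ell)$ and show that no leaf belongs to any of them, while every non-leaf vertex belongs to at least one. Recall that $G(r,\ell)$ is built from $K_r$ by attaching a single leaf to exactly $\ell$ of the $r$ clique vertices (the remaining $r-\ell$ clique vertices have no pendant). Label the clique vertices $v_1,\dots,v_r$ and suppose $v_1,\dots,v_\ell$ carry leaves $x_1,\dots,x_\ell$. The $\pd$-irrelevant vertices are precisely those lying in no minimal power dominating set, so the claim $T$ is $\pd$-irrelevant iff $T\subseteq L$ amounts to proving: (i) each leaf $x_j$ lies in no minimal power dominating set, and (ii) each clique vertex $v_i$ lies in some minimal power dominating set.

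First I would pin down what the power dominating sets are, using the definition that $S$ is power dominating iff $N_G[S]$ is a zero forcing set. The key structural observation is that a single clique vertex $v_i$ dominates all of $K_r$, so $N_G[\{v_i\}]$ contains every clique vertex plus the leaf at $v_i$ (if any). From this blue set, the zero forcing process can then force the remaining leaves one at a time: each clique vertex $v_j$ ($j\le\ell$) has exactly one white neighbor, namely its own leaf $x_j$, once the whole clique is blue, so the standard color change rule fills in all leaves. I would verify that $\{v_i\}$ is therefore a power dominating set of $G(r,\ell)$ for every $i$, which immediately shows every clique vertex lies in a minimal power dominating set (a singleton is minimal), giving direction (ii) and the reverse inclusion of the claim.

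For direction (i), I must show no leaf belongs to any minimal power dominating set. Suppose $S$ is a power dominating set containing a leaf $x_j$. The strategy is to argue $S\setminus\{x_j\}$ is still power dominating, contradicting minimality; this shows $x_j$ lies in no \emph{minimal} power dominating set. The leaf $x_j$ has a unique neighbor $v_j$, so its only contribution to $N_G[S]$ is $\{x_j,v_j\}$. The heart of the matter is that, because $r\ge 3$ and $1\le\ell\le r-2$, the set $S$ must already contain enough information elsewhere: in particular $S$ cannot be just $\{x_j\}$, since $N_G[\{x_j\}]=\{x_j,v_j\}$ induces a graph where the clique vertices other than $v_j$ stay white and no forcing can start (each blue vertex $v_j$ has many white neighbors in the clique). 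So any power dominating set containing $x_j$ contains some other vertex, and I would show that dropping $x_j$ only removes $x_j$ itself from $N_G[S]$ (since $v_j$ is covered as long as $S$ retains $v_j$ or retains another leaf/vertex forcing $v_j$); the forcing process then recovers $x_j$ from $v_j$ exactly as in the singleton analysis. The main obstacle will be handling the boundary cases cleanly: I need to confirm that after removing $x_j$, the vertex $v_j$ is still reached (either it was already in $N_G[S\setminus\{x_j\}]$ or it becomes blue during forcing), and that the condition $\ell\le r-2$ guarantees at least two clique vertices are ``free'' so the clique can always be blued from any nonempty starting configuration of clique or leaf vertices. Once that is established, $S\setminus\{x_j\}$ is power dominating, so $x_j$ is $\pd$-irrelevant, completing the characterization.
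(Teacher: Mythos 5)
Your overall plan is sound and matches the paper's in outline: show every clique vertex is in a minimal power dominating set (the singletons $\{v_i\}$, which you verify correctly) and that no leaf is. But your argument for direction (i) rests on a false claim. You assert that $\ell\le r-2$ ``guarantees at least two clique vertices are free so the clique can always be blued from any nonempty starting configuration of clique or leaf vertices.'' This is false for leaf-only configurations, and your own analysis of $S=\{x_j\}$ is the counterexample: if $S\subseteq L$, then $N_{G(r,\ell)}[S]\subseteq L\cup\{v_1,\dots,v_\ell\}$, and every dominated clique vertex has all $r-\ell\ge 2$ leafless clique vertices as white neighbors, so no force can ever be performed and those vertices stay white. (Indeed, if your claim were true, every nonempty subset of $L$ would be a power dominating set and the leaves would not be $\pd$-irrelevant at all.) You have also inverted the role of the hypothesis: the $r-\ell\ge 2$ ``free'' clique vertices are precisely the obstruction to forcing from a leaf-only set, not what enables it; when $\ell=r-1$ the set $L$ \emph{would} be a power dominating set, which is why that case is excluded.

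The missing step is to prove that no power dominating set is contained in $L$, which by monotonicity reduces to showing $L$ itself is not a power dominating set; that is exactly the paper's one-line argument and the only place $\ell\le r-2$ is used. Once you have it, your removal argument becomes unnecessary: any power dominating set $S$ containing a leaf must also contain some clique vertex $v_i$, and then $\{v_i\}\subsetneq S$ is already a power dominating set, so $S$ is not minimal. Hence the minimal power dominating sets are exactly the singletons $\{v_1\},\dots,\{v_r\}$, which gives both directions at once. Your step ``dropping $x_j$ only removes $x_j$ itself from $N_{G(r,\ell)}[S]$'' also needs this fact, since $v_j$ remains dominated after deleting $x_j$ only when $S\setminus\{x_j\}$ contains a clique vertex; a leaf $x_k$ with $k\ne j$ neither dominates $v_j$ nor can ever force it.
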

\bpf
The set $L$ is not a power dominating set of $G(r,\ell)$ because no vertex dominated by $L$ is adjacent to exactly one vertex of $G(r,\ell)$ that is not dominated by $L$, and $L$ does not dominate $G$.  But any one vertex of $K_r$ is a power dominating set.
\epf

The domination TAR reconfiguration graph of a graph $G$, denoted by $\dtar(G)$, was studied in \cite{dom-recon} after $\dtar_k(G)$ was introduced in \cite{HS14}. 
The next remark is known.

\begin{rem}\label{r:all-min-dom}
Let $G$ be a graph and $v\in V(G)$.  A minimal dominating set containing $v$ can be  constructed by starting with $S=\{v\}$ and repeatedly adding $w\not\in N[S]$ until $N[S]=V(G)$.
\end{rem}

Remark  \ref{r:all-min-dom} immediately implies the next result.

\begin{prop} \label{p:no-dom-irrel}
  If $G$ is a graph and $S\subseteq V(G)$ is $\gamma$-irrelevant, then $S=\emptyset$.
\end{prop}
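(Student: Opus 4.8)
The plan is to prove the contrapositive of the vertex-level statement: I will show that no single vertex of $G$ can be $\gamma$-irrelevant, from which the claim about sets follows immediately. Recall from Definition \ref{d:irrelevant} that a set $S$ is $\gamma$-irrelevant precisely when each of its vertices lies in no minimal dominating set of $G$. Hence if I can show that every vertex $v\in V(G)$ belongs to at least one minimal dominating set, then no vertex is $\gamma$-irrelevant, and the only set all of whose vertices are $\gamma$-irrelevant is the empty set.

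First I would fix an arbitrary vertex $v\in V(G)$ and invoke Remark \ref{r:all-min-dom}, which produces a minimal dominating set containing $v$ via the greedy procedure that starts from $\{v\}$ and repeatedly adjoins an undominated vertex until the set dominates $G$. This immediately certifies that $v$ is not $\gamma$-irrelevant. Since $v$ was arbitrary, $G$ has no $\gamma$-irrelevant vertices, and therefore the only $\gamma$-irrelevant set is $S=\emptyset$.

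The only point that genuinely requires care---and the step I expect to be the crux---is the assertion in Remark \ref{r:all-min-dom} that the greedy set is \emph{minimal}, not merely dominating. The key observation behind that remark is that the procedure only ever adjoins a vertex $w$ satisfying $w\notin N_G[S]$, so $w$ is adjacent to none of the previously chosen vertices; consequently the constructed set is independent, and an independent dominating set is automatically minimal because each of its vertices is its own private neighbor. Since Remark \ref{r:all-min-dom} already supplies this minimality, I can cite it directly, leaving only the formal contrapositive argument above to complete the proof.
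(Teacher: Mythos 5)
Your proof is correct and follows the same route as the paper, which derives the proposition directly from Remark \ref{r:all-min-dom}. Your added justification that the greedily constructed set is independent, hence minimal, is a valid (and standard) way to substantiate that remark.
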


The next result is immediate from Proposition \ref{p:no-dom-irrel} and Theorems \ref{isomorph-isometry-u} and \ref{isomorph-same-zfs}.

\begin{cor}  Let  $G$ and $G'$ be graphs with no isolated vertices, and suppose $\varphi:\dtar(G)\to \dtar(G')$ is a graph isomorphism. For every  dominating set $S$ of $G$, $|\varphi(S)|=|S|$.  Furthermore,  there exists  a bijection $\psi:V(G)\to V(G')$ such that $\psi(S)=\varphi(S)$ for every  dominating set $S$ of $G$.
\end{cor}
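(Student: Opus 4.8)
The plan is to apply the two general isomorphism theorems with the $X$-set parameter taken to be the domination number $\gamma$, and to use Proposition \ref{p:no-dom-irrel} to show that the irrelevant-set correction term is trivial. Since $\gamma$ is an $X$-set parameter and $\dtar(G)$ is exactly $\xtar(G)$ in this instance, Theorems \ref{isomorph-isometry-u} and \ref{isomorph-same-zfs} apply directly, and the whole argument is a matter of feeding the hypotheses through them in order.

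For the first assertion, I would invoke Theorem \ref{isomorph-isometry-u} with the given $\varphi$ playing the role of $\tilde\varphi$. This yields that $R' = V(G')\setminus\varphi(V(G))$ is $\gamma$-irrelevant. By Proposition \ref{p:no-dom-irrel}, the only $\gamma$-irrelevant set is empty, so $R' = \emptyset$. Since $\nu_{\emptyset}(S) = S\ominus\emptyset = S$ is the identity map, the cardinality-preserving isomorphism $\nu_{R'}\circ\varphi$ furnished by the theorem is simply $\varphi$ itself; hence $|\varphi(S)| = |S|$ for every dominating set $S$ of $G$.

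For the second assertion, with cardinality preservation in hand, I would apply the forward direction of Theorem \ref{isomorph-same-zfs}: because $|\varphi(S)| = |S|$ for every $\gamma$-set $S$, there exists a bijection $\psi:V(G)\to V(G')$ with $\psi(S) = \varphi(S)$ for every dominating set $S$ of $G$. There is no real obstacle here beyond carefully tracking the roles of the maps; the entire content is that $\gamma$ admits no nonempty irrelevant set, which collapses the correction automorphism $\nu_{R'}$ to the identity. This is precisely the feature distinguishing domination from zero forcing, where $\nu_{R'}$ can be a genuine size-changing automorphism and the first assertion could otherwise fail.
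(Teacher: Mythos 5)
Your proposal is correct and follows exactly the route the paper intends: the paper states this corollary is ``immediate from Proposition \ref{p:no-dom-irrel} and Theorems \ref{isomorph-isometry-u} and \ref{isomorph-same-zfs},'' and your write-up simply supplies the details of that deduction, correctly identifying that the absence of nonempty $\gamma$-irrelevant sets forces $\nu_{R'}$ to be the identity. No gaps.
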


Let $\psdztar(G)$ denote the positive semidefinite zero forcing TAR reconfiguration graph of $G$, which has  positive semidefinite zero forcing sets as vertices.  No work on this reconfiguration graph has appeared, but  the positive semidefinite zero forcing number is an $X$-set parameter.  Known results about positive semidefinite zero forcing and Theorems \ref{isomorph-isometry-u} and \ref{isomorph-same-zfs} provide information about isomorphisms of $\psdztar(G)$.

\begin{prop}{\rm \cite{EGR11}}
Let $G$ be a graph. Then for any vertex $v\in V(G)$, there exists a minimum positive semidefinite zero forcing set $S$ such that $v\in S$.
\end{prop}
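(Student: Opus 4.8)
The plan is to exploit the forcing-forest structure of PSD zero forcing together with a re-rooting property. Recall that running the PSD color change rule from a PSD zero forcing set $B=\{b_1,\dots,b_k\}$, and recording for each vertex $w\notin B$ the vertex that forces $w$ as the parent of $w$, produces a forest $\mathcal{F}=\{T_1,\dots,T_k\}$ of rooted trees that partitions $V(G)$, where $T_i$ is rooted at $b_i$ and each parent-child pair is an edge of $G$ (the child being the unique white neighbor of its parent in the relevant induced subgraph at the moment of forcing). Given the target vertex $v$, first choose a minimum PSD zero forcing set $B$, so that $|B|=\Zp(G)$, and let $T_j$ be the unique tree of its forcing forest that contains $v$.

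The key step is a \emph{re-rooting lemma}: if $v$ lies in the tree $T_j$ of a PSD forcing forest for $B$, then $B'=(B\setminus\{b_j\})\cup\{v\}$ is again a PSD zero forcing set, witnessed by the forest obtained from $\mathcal{F}$ by re-rooting $T_j$ at $v$, that is, by reversing the parent-child relation along the path from $b_j$ to $v$ and leaving all other trees unchanged. Granting this lemma, we have $|B'|=|B|=\Zp(G)$ and $v\in B'$, so $B'$ is the desired minimum PSD zero forcing set containing $v$, which proves the proposition.

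To prove the re-rooting lemma I would verify that a suitably reordered sequence of forces is legal under the PSD color change rule. Writing the path from $b_j$ to $v$ in $T_j$ as $b_j=u_0,u_1,\dots,u_m=v$, the proposal is to carry out the forces of $\mathcal{F}$ in (essentially) their original order, except that each force $u_{t-1}\to u_t$ along this path is replaced by its reverse $u_t\to u_{t-1}$ and rescheduled. The substance of the argument is to check that when $u_t$ is called upon to force $u_{t-1}$, the vertex $u_{t-1}$ is the unique white neighbor of $u_t$ in $G[W\cup C]$, where $C$ is the current blue set and $W$ is the component of $G-C$ containing $u_{t-1}$; the remaining (unreversed) forces must likewise stay valid.

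The hard part is exactly this verification, and it is delicate precisely because the PSD color change rule is sensitive to the components of $G$ minus the current blue set. Consequently the trees of $\mathcal{F}$ are not forced independently, and altering the order of forces inside $T_j$ could, a priori, invalidate a force in another tree. The crux is therefore a careful bookkeeping argument tracking the blue set $C$ at each stage and showing that, up to the single path reversal, the collection of already-blue vertices at each step agrees with the original process; this keeps the relevant induced subgraphs $G[W\cup C]$ and the uniqueness-of-white-neighbor conditions intact. An alternative route that sidesteps re-rooting one tree in isolation is to first establish the global reversal property of PSD forcing forests and then recover re-rooting as a special case. Either way, the component-sensitive legality check under the PSD rule is the main obstacle.
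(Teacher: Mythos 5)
First, a point of comparison: the paper does not prove this proposition at all --- it is imported verbatim from \cite{EGR11} --- so there is no in-paper argument to match. Your strategy (PSD forcing trees plus re-rooting one tree at the target vertex) is the right one, it is the framework used in the cited source, and the re-rooting lemma you state is in fact true; the deduction of the proposition from that lemma is immediate and correct.

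The genuine gap is that the re-rooting lemma is the entire technical content of the proof, and you do not prove it: you describe what a proof would have to verify and then explicitly flag that verification as ``the main obstacle.'' Moreover, the specific plan you sketch --- perform the forces of $\mathcal{F}$ ``in essentially their original order'' with the forces along the $b_j$--$v$ path reversed --- is not obviously repairable by bookkeeping alone. Starting from $B'=(B\setminus\{b_j\})\cup\{v\}$, the blue set at each stage differs from the original one on the path vertices (in particular $b_j$ is white when it was originally blue), and this difference changes the components of $G$ minus the blue set \emph{globally}: un-coloring $b_j$ can merge components and hand a vertex $x$ in a different forcing tree a second white neighbor inside the component of its intended target, so a force $x\to y$ that was legal in the original process need not be legal in the modified one. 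The disruption is therefore not confined to $T_j$, and the assertion that the unreversed forces ``stay valid'' is exactly what must be proved, not assumed. Closing this requires an actual argument --- for instance an induction on the distance from $b_j$ to $v$ in $T_j$, or the OS-set machinery developed in \cite{EGR11} --- rather than a rescheduling heuristic. As written, the proposal is a correct plan whose central lemma is left unproven.
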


\begin{cor} \label{p:no-Zp-irrel}
Let  $G$ and $G'$ be graphs with no isolated vertices, and suppose $\varphi:\psdztar(G)\to \psdztar(G')$ is a graph isomorphism. For every positive semidefinite zero forcing set $S$ of $G$, $|\varphi(S)|=|S|$.  Furthermore, there exists  a bijection $\psi:V(G)\to V(G')$ such that $\psi(S)=\varphi(S)$ for every positive semidefinite zero forcing set $S$ of $G$.
\end{cor}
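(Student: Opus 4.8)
The plan is to observe that this corollary follows the same template as its domination analogue (the corollary just after Proposition \ref{p:no-dom-irrel}), with the domination input replaced by the positive semidefinite zero forcing result of \cite{EGR11}. The first task is to rule out nonempty $\Zp$-irrelevant sets. The preceding proposition from \cite{EGR11} guarantees that every vertex $v\in V(G)$ lies in some minimum PSD zero forcing set $S$; since any minimum PSD zero forcing set is in particular a minimal one, $v$ belongs to a minimal $\Zp$-set. By Definition \ref{d:irrelevant} this means $v$ is not $\Zp$-irrelevant, so no vertex, and hence no nonempty subset of $V(G)$, is $\Zp$-irrelevant. (This is precisely the PSD counterpart of Proposition \ref{p:no-dom-irrel}.)

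Next I would apply Theorem \ref{isomorph-isometry-u} with $\tilde\varphi=\varphi$. It asserts that $R'=V(G')\setminus\varphi(V(G))$ is $\Zp$-irrelevant. By the previous paragraph this forces $R'=\emptyset$, so $\varphi(V(G))=V(G')$ and $\nu_{R'}$ is the identity map. Consequently $\varphi=\nu_{R'}\circ\varphi$ already has the size-preserving property promised by Theorem \ref{isomorph-isometry-u}, giving $|\varphi(S)|=|S|$ for every PSD zero forcing set $S$ of $G$. This establishes the first assertion.

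Finally, with the equality $|\varphi(S)|=|S|$ in hand for all PSD zero forcing sets, I would invoke Theorem \ref{isomorph-same-zfs} (applied with $X=\Zp$): its forward direction produces a bijection $\psi:V(G)\to V(G')$ with $\psi(S)=\varphi(S)$ for every PSD zero forcing set $S$ of $G$, which is exactly the second assertion.

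There is no real obstacle here; the content is entirely in checking that PSD zero forcing fits the $X$-set-parameter framework (already noted in the text) and that \cite{EGR11} eliminates nonempty $\Zp$-irrelevant sets. The only point that warrants a moment's care is the observation that a minimum $\Zp$-set is minimal, which is what lets the \cite{EGR11} statement about \emph{minimum} sets feed Definition \ref{d:irrelevant}, a condition phrased in terms of \emph{minimal} sets.
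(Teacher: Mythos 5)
Your proposal is correct and follows exactly the route the paper intends: use the \cite{EGR11} proposition to conclude that no vertex is $\Zp$-irrelevant (via the minimum-implies-minimal observation), so Theorem \ref{isomorph-isometry-u} forces $R'=\emptyset$ and $\varphi$ itself preserves cardinalities, after which Theorem \ref{isomorph-same-zfs} supplies the bijection $\psi$. This mirrors the paper's treatment of the domination analogue, and your care about feeding a statement about minimum sets into a definition phrased for minimal sets is exactly the right point to flag.
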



\subsection{Zero forcing TAR graphs and zero forcing polynomials}\label{ss:zfpoly}
Let $G$ be a graph of order $n$ and let $z(G;k)$ denote the number of zero forcing sets of cardinality $k$.  Boyer et al.~defined  the \emph{zero forcing polynomial} of $G$ to be $Z(G;x)=\sum_{k=\Z(G)}^n z(G;k)x^k$ in \cite{zfpolys}.  The next result is immediate from Corollary \ref{cor:isomorph-isometry-u} (or Corollary \ref{cor:isomorph-same-zfs}).

\begin{cor}\label{isomorph-zfpoly} Let $G$ and $G'$ be graphs with no isolated vertices such that $\ztar(G)\cong \ztar(G')$.  Then the zero forcing polynomials of $G$ and $G'$ are equal. \end{cor}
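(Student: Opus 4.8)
The plan is to read off the coefficients of the zero forcing polynomial directly from a cardinality-preserving isomorphism, which the earlier corollaries already guarantee exists. Recall that $Z(G;x)=\sum_{k=\Z(G)}^n z(G;k)x^k$, where $z(G;k)$ counts zero forcing sets of $G$ of cardinality $k$; thus it suffices to show that $G$ and $G'$ have the same order, the same zero forcing number, and the same number of zero forcing sets of each cardinality.

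First I would invoke Corollary \ref{c:z-iso-props} to record that $|V(G)|=|V(G')|=:n$ and $\Z(G)=\Z(G')$, so that the two polynomials have the same degree and the same starting index. Next, I would emphasize a point that is easy to overlook: the given isomorphism $\tilde\varphi:\ztar(G)\to\ztar(G')$ need \emph{not} preserve cardinalities of zero forcing sets (the automorphisms $\nu_R$ associated to $\Z$-irrelevant sets show exactly this failure). So one cannot simply count level sizes through $\tilde\varphi$. Instead, I would apply Corollary \ref{cor:isomorph-isometry-u} to obtain the modified isomorphism $\varphi=\nu_{R'}\circ\tilde\varphi$, which does satisfy $|\varphi(S)|=|S|$ for every zero forcing set $S$ of $G$.

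Since $\varphi$ is a bijection from the vertex set of $\ztar(G)$ to the vertex set of $\ztar(G')$ and preserves cardinality, it restricts, for each $k$, to a bijection between the zero forcing sets of $G$ of cardinality $k$ and the zero forcing sets of $G'$ of cardinality $k$. Consequently $z(G;k)=z(G';k)$ for every $k$. Combining this with the matching degree range gives
\[
Z(G;x)=\sum_{k=\Z(G)}^n z(G;k)\,x^k=\sum_{k=\Z(G')}^n z(G';k)\,x^k=Z(G';x),
\]
as desired. I expect no genuine obstacle here: all of the difficulty was already absorbed into establishing the cardinality-preserving isomorphism of Corollary \ref{cor:isomorph-isometry-u}, and the remaining argument is simply the observation that a cardinality-preserving bijection between the two collections of zero forcing sets equates their level-by-level counts. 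The only step requiring care is the explicit passage from $\tilde\varphi$ to $\varphi$, since skipping it would leave a genuine gap.
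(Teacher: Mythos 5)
Your argument is correct and is exactly the route the paper intends: the paper states the corollary is immediate from Corollary \ref{cor:isomorph-isometry-u}, and your proof simply spells out the level-by-level counting that makes it so. No issues.
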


The converse to Corollary \ref{isomorph-zfpoly} is false.     
Small examples of graphs with the same zero forcing polynomial and nonisomorphic zero forcing TAR graphs are easy to find with software, and one such example is presented next.

\begin{figure}[h!]
    \centering 
    \begin{tikzpicture}[scale=.65,every node/.style={draw,shape=circle,outer sep=2pt,inner sep=1pt,minimum size=.2cm}]   
	
	\node[fill=none]  (5) at (1,1) {\small{5}};
	\node[fill=none]  (2) at (1,-1) {\small{2}};
	\node[fill=none]  (6) at (-3,-1) {\small{6}};
	\node[fill=none]  (4) at (-3,1) {\small{4}};
	\node[fill=none]  (1) at (0,0) {\small{1}};
	\node[fill=none]  (3) at (-2,0) {\small{3}};
	\node[draw=none] at (-1, -2){\small{$G$}};
	\draw[thick] (4)--(3)--(6);
	\draw[thick] (3)--(1)--(5)--(2)--(1);
	
	\node[fill=none]  (2) at (4,1) {\small{2}};
	\node[fill=none]  (3) at (4,-1) {\small{3}};
	\node[fill=none]  (4) at (6,-1) {\small{4}};
	\node[fill=none]  (5) at (6,1) {\small{5}};
	\node[fill=none]  (1) at (3,0) {\small{1}};
	\node[fill=none]  (6) at (7,0) {\small{6}};
	\node[draw=none] at (5, -2){\small{$H$}};
	\draw[thick] (4)--(2)--(3)--(5)--(2)--(1)--(3)--(4)--(5)--(6)--(4);
	
	\end{tikzpicture}
            \caption{Two graphs the the same zero forcing polynomial and nonisomorphic zero forcing TAR graphs }  \label{fig:TAR-zfpoly}
\end{figure}
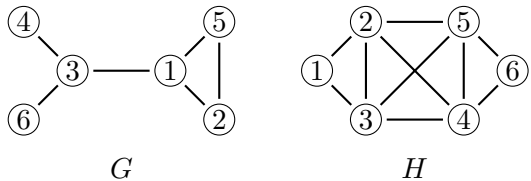

\begin{ex}
The graphs $G$ and $H$ in Figure \ref{fig:TAR-zfpoly} have \[Z(G;x)=Z(H;x)=x^6+6x^5+13x^4+8x^3\] (see \cite{sage:z-recon} for the computations).  The minimal zero forcing sets of $G$ are \{1, 2, 4\},
 \{1, 2, 6\},
 \{1, 4, 5\},
 \{1, 5, 6\},
 \{2, 4, 5\},
 \{2, 4, 6\},
 \{2, 5, 6\},
 \{4, 5, 6\} and the minimal zero forcing sets of $H$ are  \{1, 2, 4\},
 \{1, 2, 5\},
 \{1, 3, 4\},
 \{1, 3, 5\},
 \{2, 4, 6\},
 \{2, 5, 6\},
 \{3, 4, 6\},
 \{3, 5, 6\},
 \{2, 3, 4, 5\}.  Thus $\zbar(G)=3$, $\zbar(H)=4$, and $\ztar(G)\not\cong\ztar(H)$.  
 \end{ex}

 \begin{cor}
Let $G$ and $G'$ be graphs with no isolated vertices. Suppose that $\psi:V(G)\to V(G')$ is a bijection that maps minimal zero forcing sets of $G$ to minimal zero forcing sets of $G'$.  Then the coefficients of $Z(G,x)$ are bounded above by the coefficients of $Z(G',x)$.
\end{cor}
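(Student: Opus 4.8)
The plan is to reduce this to a one-line injective-counting argument, after first upgrading the hypothesis about \emph{minimal} zero forcing sets into a statement about \emph{all} zero forcing sets. First I would invoke Theorem~\ref{isomorph-minimal-u}\eqref{c2}: since $\psi$ maps minimal zero forcing sets of $G$ to minimal zero forcing sets of $G'$, it follows that $\psi$ maps every zero forcing set of $G$ to a zero forcing set of $G'$. (Concretely, any zero forcing set $S$ of $G$ contains a minimal zero forcing set $T$, and then $\psi(T)\subseteq\psi(S)$ is a minimal zero forcing set of $G'$, forcing $\psi(S)$ to be a zero forcing set by condition~(1) of Definition~\ref{X-set-param}.) This is the only nontrivial ingredient, and it is already established.

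Next I would record that, because $\psi$ is a bijection on $V(G)$, the induced map on subsets is injective and cardinality-preserving, so $|\psi(S)|=|S|$ for every $S\subseteq V(G)$. Fixing an integer $k$, the restriction of $\psi$ to the family of zero forcing sets of $G$ of cardinality $k$ is therefore an injection into the family of zero forcing sets of $G'$ of cardinality $k$. Comparing the sizes of these two families yields $z(G;k)\le z(G';k)$ for each $k$, and since $Z(G;x)=\sum_{k} z(G;k)\,x^k$, this says precisely that the coefficients of $Z(G;x)$ are bounded above term-by-term by those of $Z(G';x)$.

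I do not expect a genuine obstacle here once Theorem~\ref{isomorph-minimal-u}\eqref{c2} is applied. The single point requiring care is that $\psi$ need not carry zero forcing sets of $G$ \emph{onto} all zero forcing sets of $G'$: although $\psi$ is a vertex bijection, $G'$ may possess strictly more (and larger) minimal zero forcing sets than $G$, as illustrated by the paw example and by the graphs of Figure~\ref{fig:TAR-zfpoly}. This lack of surjectivity is exactly what produces an inequality between coefficients rather than an equality, so it should be flagged but poses no difficulty for the proof.
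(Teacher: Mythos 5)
Your proposal is correct and follows the paper's own argument exactly: apply Theorem \ref{isomorph-minimal-u}\eqref{c2} to see that $\psi$ carries every zero forcing set of $G$ to one of $G'$, then use that the vertex bijection induces an injective, cardinality-preserving map on subsets to compare the counts $z(G;k)\le z(G';k)$. Your closing remark about the lack of surjectivity explaining why one only gets an inequality is a nice observation, but the core proof is the same as the paper's.
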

\bpf  By Theorem \ref{isomorph-minimal-u}\eqref{c2}, $\psi$ maps zero forcing sets of $G$ to zero forcing sets of $G'$.  Since $\psi$ is a bijection, $S$ and $\psi(S)$ have the same cardinality.  Thus  $G'$ must have at least as many  zero forcing sets of  cardinality $k$ as $G$ does.
\epf


\subsection{Uniqueness and nonuniqueness}\label{ss:unique}
In this section we present examples of nonisomorphic graphs $G$ and $H$ with no isolated vertices such that $\ztar(G)\cong\ztar(H)$. Recall that by Theorem \ref{isomorph-same-zfs}, this implies the vertices of $G$ can be labelled  so that $G$ and $H$ have the same zero forcing sets. We also present examples of graphs $H$ (with no isolated vertices) for which the zero forcing TAR reconfiguration is unique, meaning that for any graph $G$ with no isolated vertices, $\ztar(G)\cong\ztar(H)$  implies $G\cong H$.
For a graph $G$ and vertices $u$ and $v$ that are not adjacent in $G$, the graph $G+uv$ is the graph with vertex set $V(G)$ and edge set $E(G)\cup\{uv\}$.

\begin{prop}\label{p:Cn+e}
Let  $n\ge 4$. Then $\ztar(C_n) =\ztar(C_n+uv)$ for any two vertices $u$ and $v$  of $C_n$ that are not adjacent in $C_n$.
\end{prop}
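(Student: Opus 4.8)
The plan is to prove the stronger statement that $C_n$ and $H := C_n + uv$ have \emph{exactly} the same zero forcing sets. Since $V(C_n)=V(H)$ as labeled vertex sets and adjacency in a TAR graph depends only on the collection of zero forcing sets (an edge is a symmetric difference of size one), this immediately yields $\ztar(C_n)=\ztar(H)$ as labeled graphs. Recall from the Remark preceding this subsection that $S$ is a zero forcing set of $C_n$ if and only if $S$ contains two consecutive vertices of $C_n$; equivalently, $S$ fails to be a zero forcing set of $C_n$ exactly when $S$ is an independent set of $C_n$. So the whole task is to show this same characterization holds for $H$.

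The device I would use is the notion of a \emph{fort}: a nonempty set $F\subseteq V(C_n)$ such that every vertex outside $F$ has either $0$ or at least $2$ neighbors in $F$. Two elementary facts suffice, so no deep machinery is needed. First, if the forcing process started from $S$ stalls before coloring everything, the remaining white set is a fort disjoint from $S$ (this is just the stalling condition restated). Second, if $F$ is a fort with $F\cap S=\emptyset$, then a vertex of $F$ can never be forced, because any blue vertex $x\notin F$ adjacent to $F$ has at least two white neighbors in $F$; hence $S$ is not a zero forcing set. Consequently $S$ is a zero forcing set if and only if $S$ meets every fort, and it is enough to prove the \textbf{Key Lemma}: $H$ and $C_n$ have exactly the same forts, namely those $F$ with $V(C_n)\setminus F$ independent in $C_n$.

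That the forts of $C_n$ are precisely these sets follows from a short argument: every vertex has degree two, so the only forbidden local configuration is a vertex outside $F$ with exactly one neighbor in $F$, and analyzing the maximal runs of $V(C_n)\setminus F$ around the cycle shows this is equivalent to $V(C_n)\setminus F$ being independent. For the forward containment, suppose $F$ is a $C_n$-fort, so $W:=V(C_n)\setminus F$ is independent; then every vertex of $W$ -- in particular $u$ and $v$ should they lie in $W$ -- has both of its cycle-neighbors in $F$. Adding the chord $uv$ only increases neighbor counts into $F$, so each vertex outside $F$ still has at least two neighbors in $F$ in $H$, and $F$ is an $H$-fort.

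The crux, and the step I expect to be the main obstacle, is the reverse containment: ruling out the \emph{extra} forts the chord might create. Here I would argue by contradiction using a ``rescue counting'' idea. Suppose $F$ is an $H$-fort but $W=V(C_n)\setminus F$ is \emph{not} independent in $C_n$; then $W$ contains a maximal run of at least two consecutive cycle-vertices, and each of that run's two endpoints has exactly one cycle-neighbor in $F$. In $C_n$ this already violates the fort condition, so in $H$ each such endpoint $x$ must be ``rescued'' through the chord, which forces $x\in\{u,v\}$ with the other chord-endpoint lying in $F$. Since rescuing $u$ requires $v\in F$ and rescuing $v$ requires $u\in F$, at most one run-endpoint can ever be rescued, whereas a run of length at least two presents two endpoints demanding rescue -- a contradiction. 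Hence every $H$-fort has $V(C_n)\setminus F$ independent, the two fort collections coincide, and the proposition follows. The degenerate configurations (a run wrapping around most of the cycle, or $u$ and $v$ at cycle-distance two so that they share a neighbor) are absorbed by the same count, since each still presents two endpoints that cannot both be rescued.
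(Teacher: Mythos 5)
Your proposal is correct, and its top-level strategy coincides with the paper's: both arguments reduce the proposition to the claim that $C_n$ and $C_n+uv$ have exactly the same zero forcing sets, namely the sets containing two consecutive cycle vertices, from which equality of the TAR graphs is immediate. The difference is in how that claim is justified. The paper simply asserts the characterization for both graphs, leaving unexamined the one nontrivial point -- that the chord cannot create new zero forcing sets (equivalently, that a set independent in $C_n$, even one containing $u$ or $v$, still fails to force in $C_n+uv$). Your fort argument supplies exactly this missing verification, and does so cleanly: the forward containment of forts is trivial since adding an edge only increases neighbor counts into $F$, and your ``rescue counting'' for the reverse containment correctly observes that a maximal run of length at least two in $V(C_n)\setminus F$ has two distinct endpoints each needing a second neighbor in $F$ via the chord, while rescuing one endpoint forces the other chord-end into $F$ and hence out of the run -- so both endpoints can never be rescued, covering the degenerate placements of $u$ and $v$ uniformly. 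What your approach buys is a complete, case-free proof of the assertion the paper takes as evident; what the paper's buys is brevity, at the cost of leaving the reader to check that the characterization really does persist after adding the chord.
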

\bpf
For both $C_n$ and $C_n+uv$, a set of vertices $S$ is a zero forcing set if and only if $S$ contains two vertices that are consecutive on the cycle.  Since $C_n$ and $C_n+uv$ have exactly the same zero forcing sets, $\ztar(C_n) =\ztar(C_n+uv)$.
\epf

Next we present examples of unique zero forcing TAR graphs.  The first two are immediate since they are the unique graphs having the given zero forcing number.  However, the third is less obvious.

\begin{prop}\label{c:path-unique} Let $G$ be a graph on $n\ge 2$ vertices with no isolated vertices.  If $\ztar(G)\cong\ztar(P_n)$, then $G\cong P_n$. \end{prop}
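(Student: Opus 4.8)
The plan is to reduce the statement to the classical characterization of graphs whose zero forcing number equals one. By Corollary \ref{c:z-iso-props}, any isomorphism $\ztar(G)\cong\ztar(P_n)$ forces $|V(G)|=|V(P_n)|=n$ and $\Z(G)=\Z(P_n)=1$. Thus the entire content of the proposition is the implication that a graph $G$ with no isolated vertices and $\Z(G)=1$ must be $P_n$. First I would record that $\Z(P_n)=1$: beginning with a single endpoint colored blue, at each step the unique white neighbor is forced, so one endpoint is a zero forcing set; since the empty set is never a zero forcing set (condition (2) of Definition \ref{X-set-param}), $\Z(P_n)=1$ exactly.

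The substance is the converse. Assuming $\Z(G)=1$, I would first note $G$ is connected: a zero forcing set of a disconnected graph is a union of nonempty zero forcing sets of its components, so a graph with no isolated vertices and at least two components has $\Z(G)\ge 2$. Let $\{v_1\}$ be a zero forcing set. Reading off the forcing process produces an ordering $v_1,v_2,\dots,v_n$ in which $v_k$ forces $v_{k+1}$ at step $k$; at that step $v_1,\dots,v_k$ are blue and $v_{k+1},\dots,v_n$ are white. For $v_k$ to perform this force, $v_{k+1}$ must be its \emph{only} white neighbor, so $v_k$ is adjacent to none of $v_{k+2},\dots,v_n$. Applying this for every $k$ from $1$ to $n-1$ shows that the only edges of $G$ are the $v_kv_{k+1}$, hence $G\cong P_n$.

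The only step that needs any care is this ``no chord'' argument, ruling out an edge $v_kv_j$ with $j\ge k+2$; but it follows immediately from the fact that each force requires a unique white neighbor, so it is really bookkeeping on the forcing chain rather than a genuine obstacle. Since the characterization $\Z(G)=1\iff G\cong P_n$ (for $G$ with no isolated vertices) is standard in the zero forcing literature \cite{HLA2}, I expect the cleanest route is simply to cite it and combine it with Corollary \ref{c:z-iso-props}, which is what makes this one of the ``immediate'' uniqueness results; I would include the short forcing-chain argument above only if a self-contained proof is preferred.
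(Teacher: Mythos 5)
Your proposal is correct and follows exactly the paper's route: the paper likewise invokes Corollary \ref{c:z-iso-props} to get $\Z(G)=\Z(P_n)=1$ and then concludes that $G$ is a path, treating the characterization of graphs with zero forcing number one as known. The forcing-chain argument you supply merely makes that last step self-contained.
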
 
\bpf
By Corollary \ref{c:z-iso-props}, $\ztar(G)\cong\ztar(P_n)$ implies $\Z(G)=\Z(P_n)=1$, which implies $G$ is a path.
\epf

\begin{prop}
Let $G$ be a connected graph on $n\ge 2$ vertices with no isolated vertices. Then $\ztar(G) \cong K_{1,n}$ implies $G \cong K_{n}$.
\end{prop}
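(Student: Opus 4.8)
The plan is to show that if $\ztar(G) \cong K_{1,n}$, then $G$ must be the complete graph $K_n$. The key structural fact about $K_{1,n}$ is that it has exactly one vertex of degree $n$ (the center) and $n$ leaves, each of degree $1$. So the first step is to extract information from the degree sequence of $\ztar(G)$ and apply the known results in Theorem \ref{t:univ-z1} and Corollary \ref{c:z-iso-props}. Since $\ztar(G) \cong K_{1,n}$ and $K_{1,n}$ has $n+1$ vertices, the base graph $G$ has exactly $n+1$ zero forcing sets. Moreover, by Theorem \ref{t:univ-z1}(1), the center of the star (the unique degree-$n$ vertex of $\ztar(G)$) must correspond to $V(G)$ itself, since $\Delta(\ztar(G)) = n$ is attained only at $V(G)$.

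Next I would pin down the zero forcing number. By Theorem \ref{t:univ-z1}(2), $\delta(\ztar(G)) = n - \zbar(G)$. The $n$ leaves of $K_{1,n}$ each have degree $1$, so $\delta(\ztar(G)) = 1$, forcing $\zbar(G) = n-1$. Since $\Z(G) \le \zbar(G) = n-1$ and the leaves are precisely the minimal zero forcing sets (a minimal zero forcing set $S$ has $\deg_{\ztar(G)}(S) = n - |S|$, which equals $1$ exactly when $|S| = n-1$), every minimal zero forcing set has cardinality $n-1$. In particular $\Z(G) = n-1$, so $\delta(G) \le \Z(G) = n-1$; but any simple graph on $n$ vertices has $\delta(G) \le n-1$ trivially, so this bound alone is not yet conclusive. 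The crucial observation is that the $n$ zero forcing sets of size $n-1$ are exactly the $n$ leaves, and together with $V(G)$ these account for all $n+1$ vertices of $\ztar(G)$. Hence \emph{every} set of size $n-1$ is a zero forcing set, and there are no zero forcing sets of size smaller than $n-1$.

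The main obstacle, and the heart of the argument, is concluding $G \cong K_n$ from the fact that the zero forcing sets of $G$ are exactly the sets of size $\ge n-1$. I would argue by contradiction: if $G \ne K_n$, then $G$ has two non-adjacent vertices $u$ and $w$. The plan is to exhibit a zero forcing set of size strictly less than $n-1$, contradicting $\Z(G) = n-1$. Consider coloring blue all vertices except $u$ and $w$; this is a set of size $n-2$, and I claim that with an appropriate choice of the non-adjacent pair it forces. Concretely, since $G$ is connected with $n \ge 2$, one can use the connectivity to find a blue vertex with exactly one white neighbor among $\{u,w\}$ and trigger a force, or more directly argue that $\delta(G) \le n-2$ would contradict $\delta(G) \le \Z(G)$ being tight at $n-1$. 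The cleanest route is: $\Z(G) = n-1$ combined with the well-known bound $\delta(G) \le \Z(G)$ gives little, so instead I would use that $\Z(G) = n-1$ is attained by the \emph{complement of a single vertex} only when no smaller forcing set exists, and show that $G$ having a non-edge $uw$ lets the common structure force down. I expect the decisive step is verifying that a non-complete connected graph on $n$ vertices always has a zero forcing set of size $n-2$, which follows because one can leave two non-adjacent vertices white and let a suitable neighbor force one of them. This contradiction forces $G \cong K_n$, completing the proof.
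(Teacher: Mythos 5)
Your overall route is the paper's: reduce to $\Z(G)=n-1$ (the paper gets this in one line from Corollary \ref{c:z-iso-props}, since $\ztar(G)\cong K_{1,n}\cong\ztar(K_n)$; your re-derivation from the degree sequence of $K_{1,n}$ reaches the same point) and then invoke the fact that a connected graph on $n\ge 2$ vertices with zero forcing number $n-1$ is complete. The gap is in your justification of that last fact. You propose to pick two non-adjacent vertices $u,w$ and show that $V(G)\setminus\{u,w\}$ forces, asserting that ``with an appropriate choice of the non-adjacent pair'' some blue vertex has exactly one white neighbor. This fails whenever $u$ and $w$ are non-adjacent twins, i.e., $N_G(u)=N_G(w)$: every vertex adjacent to $u$ is then also adjacent to $w$, so no vertex can perform a force. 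Worse, there are connected non-complete graphs in which \emph{every} non-adjacent pair is a twin pair --- e.g.\ $C_4=K_{2,2}$ or the octahedron $K_{2,2,2}$ --- so no choice of non-adjacent pair works at all, even though these graphs do have zero forcing sets of size $n-2$ (for $C_4$, the complement of an \emph{adjacent} pair is a zero forcing set).

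The repair is to choose the white pair differently: since $G$ is connected and not complete, there is a vertex $v$ with $1\le \deg_G(v)\le n-2$; pick a neighbor $w$ of $v$ and a non-neighbor $u$ of $v$, and let $S=V(G)\setminus\{u,w\}$. Then $w$ is the unique white neighbor of $v$, so $v$ forces $w$, after which $u$ is the only white vertex and is forced by any of its neighbors (which are blue and exist by connectivity). Hence $\Z(G)\le n-2$, the desired contradiction. With this substitution your argument is complete and coincides in substance with the paper's proof, which simply cites $\Z(G)=\Z(K_n)=n-1$ and treats the implication ``$\Z(G)=n-1$ implies $G$ complete'' as known.
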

\bpf By Corollary \ref{c:z-iso-props}, $\ztar(G)\cong K_{1,n}\cong\ztar(K_n)$ implies $\Z(G)=\Z(K_n)=n-1$, which implies $G$ is a complete graph.
\epf

In order to discuss uniqueness of the next family of graphs, we need to know what happens in the case of disconnected graphs. 

\begin{prop}\label{disjoin - cartesian-u}
Let $X$ be an $X$-set parameter and let $G = G_1\sqcup G_2$. Then $\xtar(G) = \xtar(G_1)\square\xtar(G_2)$.
\end{prop}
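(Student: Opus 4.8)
The plan is to identify the two vertex sets via the decomposition guaranteed by condition (3) of Definition \ref{X-set-param}, and then check that the two adjacency rules coincide. First I would define the map
\[
f:V(\xtar(G))\to V(\xtar(G_1))\times V(\xtar(G_2)),\qquad f(S)=\bigl(S\cap V(G_1),\,S\cap V(G_2)\bigr),
\]
and argue it is a bijection. Condition (3) tells us that each $X$-set $S$ of $G=G_1\sqcup G_2$ splits as $S=S_1\cup S_2$ with $S_i=S\cap V(G_i)$ an $X$-set of $G_i$, so $f$ is well defined, and it establishes the converse characterization as well, namely that a set $S\subseteq V(G)$ is an $X$-set of $G$ exactly when $S\cap V(G_i)$ is an $X$-set of $G_i$ for $i=1,2$. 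Consequently $(S_1,S_2)\mapsto S_1\cup S_2$ is a two-sided inverse of $f$, so $f$ is a bijection between the vertex sets.

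Next I would verify that $f$ carries edges to edges in both directions. The key observation is that $V(G_1)$ and $V(G_2)$ are disjoint, so for $X$-sets $S=S_1\cup S_2$ and $T=T_1\cup T_2$ of $G$ we have $S\ominus T=(S_1\ominus T_1)\du(S_2\ominus T_2)$ and hence $|S\ominus T|=|S_1\ominus T_1|+|S_2\ominus T_2|$. Therefore $|S\ominus T|=1$ if and only if exactly one of the two summands equals $1$ and the other equals $0$, i.e.\ if and only if either $S_1=T_1$ and $|S_2\ominus T_2|=1$, or $S_2=T_2$ and $|S_1\ominus T_1|=1$. This is precisely the adjacency rule defining the Cartesian product $\xtar(G_1)\square\xtar(G_2)$, so $f$ is a graph isomorphism; under the natural identification of an $X$-set of $G$ with the corresponding pair, this gives the claimed equality $\xtar(G)=\xtar(G_1)\square\xtar(G_2)$.

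The computation itself is routine; the only point that needs care is the bijection step. I expect the main (and minor) obstacle to be pinning down that condition (3) is genuinely biconditional---that the union of $X$-sets of the two components is itself an $X$-set of $G$, not merely that every $X$-set of $G$ restricts to $X$-sets on the components---so that $f^{-1}$ is well defined and $f$ is onto. Once this biconditional characterization of the $X$-sets of a disjoint union is in hand, the additivity of $|{\cdot}\ominus{\cdot}|$ across the disjoint partition $V(G)=V(G_1)\,\dot\cup\,V(G_2)$ does all of the remaining work.
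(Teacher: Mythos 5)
Your proposal is correct and follows essentially the same route as the paper: both decompose each $X$-set of $G$ into its components via condition (3) of Definition \ref{X-set-param} and then observe that symmetric difference one corresponds exactly to the Cartesian-product adjacency rule. Your version is slightly more explicit about the vertex bijection and the biconditional reading of condition (3), which the paper leaves implicit, but the argument is the same.
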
 
\begin{proof}
Let $S$ and $T$ be $X$-sets of $G$. Then $S = S_1\sqcup S_2$ and $T = T_1 \sqcup T_2$, where $S_1$ and $T_1$ are zero forcing sets of $G_1$, and $S_2$ and $T_2$ are zero forcing sets of $G_2$. Observe that $S$ and $T$ are adjacent in $\ztar(G)$ if and only if there exists a vertex $v\in V(G_1)\sqcup V(G_2)$ such that
\begin{itemize}
    \item $S_1 = T_1$ and ($ T_2=S_2\setminus \{v\}$ or $S_2 = T_2\setminus \{v\}$), or
    \item $S_2 = T_2$ and ($ T_1=S_1\setminus \{v\} $ or $S_1 = T_1\setminus \{v\}$).
\end{itemize}
Further, $ T_2=S_2\setminus \{v\}$ or $S_2 = T_2\setminus \{v\}$ if and only if $S_2$ is adjacent to $T_2$ in $\xtar(G_2)$, and $ T_1=S_1\setminus \{v\} $ or $S_1 = T_1\setminus \{v\}$ if and only $S_1$ is adjacent to $T_1$ in $\xtar(G_1)$.
\end{proof}

We restate Proposition \ref{disjoin - cartesian-u} for the specific case of zero forcing. 

\begin{cor}\label{disjoin - cartesian}
Let $G = G_1\sqcup G_2$. Then $\ztar(G) = \ztar(G_1)\square\ztar(G_2)$.
\end{cor}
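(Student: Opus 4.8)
The plan is to obtain this statement as an immediate specialization of Proposition \ref{disjoin - cartesian-u}. That proposition establishes the identity $\xtar(G) = \xtar(G_1)\square\xtar(G_2)$ for \emph{every} $X$-set parameter $X$ and every disjoint union $G = G_1\sqcup G_2$, so the only thing that needs checking is that the zero forcing number $\Z$ qualifies as an $X$-set parameter in the sense of Definition \ref{X-set-param}.

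This verification has already been recorded in the excerpt: immediately after Definition \ref{X-set-param} it is noted that $\Z(G)$ is an $X$-set parameter. Concretely, the collection of zero forcing sets satisfies the four defining conditions. Any superset of a zero forcing set is again a zero forcing set, giving condition (1); the empty set never forces, giving condition (2); a zero forcing set of a disconnected graph decomposes as the union of zero forcing sets of the components, which is precisely condition (3); and for a graph with no isolated vertices every set of $|V(G)|-1$ vertices forces, giving condition (4).

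Given this, I would simply write: since $\Z$ is an $X$-set parameter, setting $X = \Z$ in Proposition \ref{disjoin - cartesian-u} yields $\ztar(G) = \ztar(G_1)\square\ztar(G_2)$. There is no genuine obstacle, as the entire combinatorial content—the factorization of the $X$-sets of a disjoint union as a product together with the matching of symmetric-difference-one adjacencies to Cartesian-product edges—has already been carried out in the proof of the proposition. The one conceptual point worth flagging is that condition (3) of Definition \ref{X-set-param} is exactly what guarantees the vertex set factors as $V(\xtar(G_1))\times V(\xtar(G_2))$; for zero forcing this is the familiar fact that forcing proceeds independently within each connected component, so that $S\subseteq V(G_1)\cup V(G_2)$ forces $G$ precisely when $S\cap V(G_1)$ forces $G_1$ and $S\cap V(G_2)$ forces $G_2$.
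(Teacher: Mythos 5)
Your proposal is correct and matches the paper exactly: the corollary is stated there as a restatement of Proposition \ref{disjoin - cartesian-u} for the specific case $X=\Z$, relying on the fact (noted after Definition \ref{X-set-param}) that $\Z$ is an $X$-set parameter. Your additional verification of the four conditions for zero forcing is sound and harmless, though the paper takes it as already established.
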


The proof of the next result uses some additional definitions and known properties of zero forcing. Let $G$ be a graph. For a given zero forcing set $S$, carry out a forcing process to color all vertices blue, recording the forces; the {set  of these forces} is denoted by $\F$. A set of forces $\F$ of $S$ defines a \emph{reversal}  of $S$, namely  the set of vertices that do not perform a force (using the set of forces $\F$). The next process is sometimes called \emph{neighbor trading} \cite{GHH}. Let $S$ be a minimum zero forcing set of $G$ and $v\in S$ with $\deg_G(v)\ge 2$. Suppose   $v\to w$ can be the first force performed in a forcing process.  Let $u\in N_G(v)$ and $u\ne w$.  Then $u\in S$ and $S\setminus \{u\}\cup \{w\}$ is a  minimum zero forcing set with first force $v\to w$ by $v\to u$ and the other forces remaining the same.  

\begin{prop}\label{p:star-unique} Let $G$ be a graph on $n\ge 3$ vertices with no isolated vertices.  If $\ztar(G)\cong K_{1,r}\square K_2$, then $G\cong K_{1,r}$. \end{prop}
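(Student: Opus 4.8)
The plan is to extract numerical invariants of $G$ from the hypothesis, reinterpret the minimal zero forcing sets combinatorially as pairs of \emph{non-twin} vertices, and then argue that the resulting twin structure forces $G\cong K_{1,r}$.

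First I would apply Corollary \ref{c:z-iso-props} to $\ztar(G)\cong K_{1,r}\square K_2=\ztar(K_{1,r})$ to obtain $n=|V(G)|=r+1$ and $\Z(G)=\zbar(G)=r-1=n-2$. Since $\Z(G)=\zbar(G)=n-2$, every minimal zero forcing set has cardinality exactly $n-2$, and these are precisely the zero forcing sets of size $n-2$. Now I count the vertices of $\ztar(G)$, which number $|V(K_{1,r}\square K_2)|=2(r+1)$: the set $V(G)$ is the unique zero forcing set of size $n$, each of the $n=r+1$ sets of size $n-1$ is a zero forcing set by Definition \ref{X-set-param}(4), and there are no zero forcing sets of smaller size. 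Hence $G$ has exactly $2(r+1)-(r+1)-1=r$ minimal zero forcing sets.

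Next I would characterize these $r$ sets. For distinct $a,b\in V(G)$, the set $V(G)\setminus\{a,b\}$ is a zero forcing set if and only if some vertex of $G$ is adjacent to exactly one of $a,b$: if such a vertex exists it forces its unique white neighbor among $\{a,b\}$, after which a neighbor of the one remaining white vertex forces it (using that $G$ has no isolated vertices), while conversely the first force in any coloring with only $a,b$ white must be performed by a vertex adjacent to exactly one of $a,b$. Thus $V(G)\setminus\{a,b\}$ fails to be a zero forcing set exactly when $N_G(a)\setminus\{b\}=N_G(b)\setminus\{a\}$, i.e.\ when $a$ and $b$ are \emph{twins}. Since there are $r$ minimal zero forcing sets, and these correspond to the non-twin pairs, $G$ has exactly $\binom{r+1}{2}-r=\binom{r}{2}$ twin pairs.

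Finally I would run the combinatorial argument. The twin relation is an equivalence relation whose classes are each either a clique (closed twins) or an independent set (open twins); if the classes have sizes $c_1,\dots,c_t$ with $\sum_i c_i=r+1$, then the number of twin pairs is $\sum_i\binom{c_i}{2}$. Imposing $\sum_i\binom{c_i}{2}=\binom{r}{2}$ forces the class sizes to be $(r,1)$: a single class of size $r+1$ gives $\binom{r+1}{2}>\binom{r}{2}$ pairs, while any partition with largest part at most $r-1$ gives strictly fewer than $\binom{r}{2}$ pairs (by convexity, the best such partition is $(r-1,2)$, yielding $\binom{r-1}{2}+1<\binom{r}{2}$ for $r\ge 3$, and $0<1$ for $r=2$). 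If the size-$r$ class were a clique of closed twins, the remaining vertex would be nonadjacent to all of them and hence isolated, a contradiction; so the size-$r$ class is an independent set of open twins $\{a_1,\dots,a_r\}$ whose common neighborhood is contained in the singleton $\{b\}$, and since no $a_i$ is isolated this common neighborhood equals $\{b\}$. That is precisely the star $K_{1,r}$ with center $b$. I expect the main obstacle to be this twin-class counting step: verifying that the relation is a genuine equivalence relation with clique/independent classes, that $(r,1)$ is the unique feasible partition, and that the clique case is excluded by the no-isolated-vertex hypothesis.
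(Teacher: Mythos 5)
Your proof is correct, but it takes a genuinely different route from the paper's. The paper first reduces to connected $G$ via $\ztar(H_1\sqcup H_2)\cong\ztar(H_1)\square\ztar(H_2)$ and Corollary \ref{no-ZTAR=K2}, then invokes Theorem \ref{isomorph-same-zfs} to relabel $V(G)$ so that $G$ has \emph{exactly} the zero forcing sets of $K_{1,r}$, and finally runs a forcing-process analysis (exactly two forces from a minimum set, reversals, neighbor trading, and the forbidden induced subgraphs for $\Z(G)=|V(G)|-2$) to pin down the star. You instead use only numerical consequences of the isomorphism --- $n=r+1$ and $\Z(G)=\zbar(G)=n-2$ from Corollary \ref{c:z-iso-props}, plus the vertex count $2(r+1)$ of $K_{1,r}\square K_2$ --- to conclude that $G$ has exactly $r$ zero forcing sets of size $n-2$, reinterpret these as the non-twin pairs, and finish with a partition/convexity count of twin classes. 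Your argument is more elementary (no neighbor trading, no reversals, no forbidden-subgraph characterization, and no separate reduction to the connected case, since the twin analysis yields $G\cong K_{1,r}$ outright), and it in fact proves the formally stronger statement that any graph with no isolated vertices on $r+1$ vertices with $\Z=\zbar=r-1$ and exactly $r$ minimum zero forcing sets is $K_{1,r}$. Three small points to tighten: (i) in characterizing when $V(G)\setminus\{a,b\}$ forces, the distinguishing vertex must lie outside $\{a,b\}$ (your displayed condition $N_G(a)\setminus\{b\}=N_G(b)\setminus\{a\}$ is the right one, but the phrase ``some vertex of $G$'' is loose when $a$ and $b$ are adjacent); (ii) the fact that the mixed open/closed twin relation is an equivalence relation with pure clique or independent-set classes rests on the observation that no vertex can simultaneously have an open twin and a closed twin --- worth one line; (iii) in excluding the clique case, note explicitly that if the leftover vertex $b$ were adjacent to one closed twin it would be adjacent to all, forcing $N[b]=V(G)=N[a_i]$ and making $b$ a member of the class, contradicting the partition $(r,1)$; only then does $b$ become isolated.
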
 
\bpf Suppose $\ztar(G)\cong K_{1,r}\square K_2$. Recall that $\ztar(H_1\sqcup H_2)\cong \ztar(H_1)\square\ztar(H_2)$.  Since there does not exist a graph $H_2$ with $\ztar(H_2)=K_2$, we may assume $G$ is connected. 

Since $\ztar(G)\cong \ztar(K_{1,r})$, the order of $G$ is $r+1$ and we label the vertices of $G$ so that the zero forcing sets of $G$ are exactly the zero forcing sets of $K_{1,r}$ where the vertices of both graphs are $\{0,\dots,r\}$ and $0$ is the center vertex  of $K_{1,r}$. Note that every minimal zero forcing set of $K_{1,r}$, and thus of $G$, is a minimum zero forcing set. Recall from  Example \ref{star-irrelevant-center} that the center vertex $0$ is $\Z$-irrelevant, i.e., not in any minimum zero forcing set.  

  Let $B$ be a minimum zero forcing set of $G$.  Since $\Z(G)=|V(G)|-2$,  exactly two forces are performed to color all vertices blue starting with the vertices in $B$ blue.
Since a vertex that does not force is in the zero forcing set of the reversal of the set of forces, $0$ must perform a force and the set of forces must be $\{i\to 0, 0\to j\}$.  If $\deg_G i\ge 2$, then by neighbor trading $0$ would be in a minimum zero forcing set.  Thus $\deg_G i=1$.    By considering the reversal, $\deg_G j=1$. If $n=3$, then $G\cong K_{1,2}$, so assume $n\ge 4$ (i.e., $r\ge 3$).  Then there is another vertex $k$, which must be adjacent to $0$ and not adjacent to $i$ or $j$.  If $r=3$, then $G\cong K_{1,r}$.  If $r\ge 4$ then there exists another vertex $\ell$.  Since $\ell\not\in N_G(i)$ and $\ell\not\in N_G(j)$, $\ell\in N_G(0)$ or $\ell\in N_G(k)$. Let $\ltimes$ denote the graph obtained from $K_3$ by adding adding two leaves to one vertex. If $\ell\in N_G(0)\cap N_G(k)$, then $G[\{0,i,j,k,\ell\}]\cong\ltimes$, which is a forbidden induced subgraph for $\Z(G)=|V(G)|-2$.  If $\ell\in N_G(k)$ and $\ell\not\in N_G(0)$, then $G[\{0,i,k,\ell\}]\cong P_4$,  which is a forbidden induced subgraph for $\Z(G)=|V(G)|-2$. Thus $G[\{0,i,j,k,\ell\}]\cong K_{1,4}$.  The  argument for $\ell$ shows any additional vertices must also be leaves and $G\cong K_{1,r}$. 
\epf

 The next table shows the number of (nonismorphic) graphs without isolated vertices of order at most eight  that have  unique Z-TAR reconfiguration graphs.
 
\begin{table}[!h]
\begin{center}
\begin{tabular}{ |c|c|c|c|c|c|c|c|c| } 
 \hline
 vertices  & 2 & 3 & 4 & 5 & 6 & 7 
 & 8\\
\hline
\# graphs with no isolated vertices  & 1 & 2 & 7 & 23 & 122 & 888 & 11302\\
 \hline
\# graphs with  unique $\ztar(G)$  & 1 & 2 & 4 & 7 & 34 & 303 & 5318\\
  \hline
 ratio (\# unique/\# no isolated) &  1 & 1 & 0.5714 & 0.3043 & 0.2787 & 0.3412 & 0.4705\\
 \hline
\end{tabular}\vspace{-9pt}
\end{center}
    \caption{Number of graphs with unique zero forcing TAR graph for small orders}
    \label{t:unique-tar-data}
\end{table}

%
%
\section{Connectedness properties of the zero forcing TAR graph}\label{s:ZTAR}

The focus of this section is connectedness properties of the zero forcing TAR graph.  We exhibit a family of graphs $H$ where $\zzo(H)$ exceeds the lower bound $\zbar(H)+1$  by an arbitrary amount and another family of graphs $G$ where $\ulzo(G)$ is strictly less than $\zzo(G)$. These examples are interesting because the more common situation is $\ulzo(G)=\zzo(G)=\zbar(G)+1$. Since the next result applies to several graph parameters,  we state it in the universal format (the result is known for domination  \cite{HS14}).

\begin{prop}\label{p:upper-con}
Let $X$ be an $X$-set parameter and let $G$ be a graph. If $\xtar_{k}(G)$ is connected for some $k>\xbar(G)$, then $\xtar_{k'}(G)$ is connected  for $k'=k,\dots,n$ and $\xxo(G)\le k$.   If $\xtar_{\xbar(G) + 1}(G)$ is connected, then $\xxo(G) = \xbar(G) + 1$.  If $\ulxo(G)>\xbar(G)$, then $\ulxo(G)=\xxo(G)$.
\end{prop}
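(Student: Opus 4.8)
The plan is to prove the three implications in sequence, exploiting the fact that increasing $k$ can only add vertices to $\xtar_k(G)$ (by condition (1) of Definition \ref{X-set-param}, supersets of $X$-sets are $X$-sets), so the key is to understand how new vertices attach to the existing graph. The central observation is that every $X$-set $S$ with $|S| > \xbar(G)$ is \emph{non-minimal}: since $\xbar(G)$ is the maximum size of a minimal $X$-set, any $X$-set larger than this must strictly contain some minimal $X$-set, and therefore contains a proper $X$-subset. Consequently $S$ has a neighbor $S \setminus \{x\}$ of smaller cardinality in $\xtar(G)$, and that neighbor already lies in $\xtar_{|S|-1}(G)$.

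For the first implication, suppose $\xtar_k(G)$ is connected for some $k > \xbar(G)$. I would argue by induction that $\xtar_{k'}(G)$ is connected for each $k' = k, \dots, n$. The inductive step is the crux: passing from $\xtar_{k'}(G)$ to $\xtar_{k'+1}(G)$ adds exactly the $X$-sets of cardinality $k'+1$. Each such new vertex $S$ satisfies $|S| = k'+1 > \xbar(G)$, so by the observation above $S$ has a neighbor of cardinality $k'$ already present in $\xtar_{k'}(G)$. Thus every newly added vertex is adjacent to the (connected) previous graph, so connectedness is preserved. This immediately gives $\xxo(G) \le k$, since $\xtar_i(G)$ is connected for all $i$ from $k$ to $n$.

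For the second implication, applying the first with $k = \xbar(G)+1$ shows that connectedness of $\xtar_{\xbar(G)+1}(G)$ forces $\xtar_i(G)$ to be connected for all $i \ge \xbar(G)+1$, whence $\xxo(G) \le \xbar(G)+1$. The reverse inequality $\xxo(G) \ge \xbar(G)+1$ should follow from the established fact (cited earlier in the excerpt for the zero-forcing case via minimal sets having no smaller neighbors) that $\xtar_{\xbar(G)}(G)$ is disconnected, because a minimal $X$-set of maximum cardinality $\xbar(G)$ has no neighbors in $\xtar_{\xbar(G)}(G)$; I would give the short universal version of this argument. Hence $\xxo(G) = \xbar(G)+1$.

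For the third implication, suppose $\ulxo(G) > \xbar(G)$. By definition $\xtar_{\ulxo(G)}(G)$ is connected, and $\ulxo(G) > \xbar(G)$ lets me invoke the first implication with $k = \ulxo(G)$ to conclude $\xxo(G) \le \ulxo(G)$. For the reverse inequality, I note that $\xxo(G)$ is the least $k$ such that \emph{all} of $\xtar_k(G), \dots, \xtar_n(G)$ are connected, so in particular $\xtar_{\xxo(G)}(G)$ is connected, which gives $\ulxo(G) \le \xxo(G)$ directly from the definition of $\ulxo$ as the least $k$ with $\xtar_k(G)$ connected. Combining the two inequalities yields $\ulxo(G) = \xxo(G)$. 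The main obstacle I anticipate is being careful with the boundary definitions—keeping straight that $\xxo$ demands connectedness of an entire upward range while $\ulxo$ demands it at a single level—and ensuring the disconnectedness of $\xtar_{\xbar(G)}(G)$ is justified cleanly in the universal setting rather than only for zero forcing.
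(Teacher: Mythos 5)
Your proof is correct and follows essentially the same route as the paper: both arguments rest on the observation that every $X$-set of cardinality exceeding $\xbar(G)$ properly contains a minimal $X$-set and hence has a neighbor of smaller cardinality in the TAR graph, and both obtain the second claim from the lower bound $\xxo(G)\ge\xbar(G)+1$ (which the paper simply cites from \cite{PD-recon}) and the third claim as an immediate consequence of the first. The only cosmetic difference is that you add the cardinality-$(k'+1)$ sets one layer at a time by induction, whereas the paper connects two arbitrary $X$-sets of $\xtar_{k'}(G)$ directly by routing a path down to minimal $X$-sets, through the connected subgraph $\xtar_{k}(G)$, and back up.
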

\begin{proof}
 Suppose $\xtar_{k}(G)$ is connected for some $k \geq \xbar(G) + 1$. Consider two distinct $X$-sets $S_1', S_2' \in V(\xtar_{k'}(G))$ with $k'\ge k$. Let  $S_1$ and $S_2$ be minimal $X$-sets of $G$ contained in $S_1'$ and $S_2'$, respectively, and note that $S_1, S_2 \in V(\xtar_{k}(G))$. Since $\xtar_{k}(G)$ is connected, there exists a path in $\xtar_{k}(G)$ from $S_1$ to $S_2$.  Furthermore,  there is a path $P_i$ from $S_i$ to $S_i'$ in $\xtar_{k'}(G)$ for  $i=1,2$. Since $\xtar_{k}(G)$ is a subgraph of $\xtar_{k'}(G)$, there exists a path from $S_1'$ to $S_2'$ in $\xtar_{k'}(G)$. Thus, $\xtar_{k'}(G)$ is connected.  This implies $\xxo(G)\le k$.  When $k=\xbar(G)+1$, the  bound $\xxo(G)\le\xbar(G)+1$ \cite{PD-recon} then implies $\xxo(G) = \xbar(G) + 1$.  The last statement is immediate from the first.
\end{proof}

\begin{cor}\label{c:upper-con-z}
Let $G$ be a graph. If $\ztar_{k}(G)$ is connected for some $k>\zbar(G)$, then $\ztar_{k'}(G)$ is connected  for $k'=k,\dots,n$ and $\zzo(G)\le k$.   
 If $\ztar_{\zbar(G) + 1}(G)$ is connected, then $\zzo(G) = \zbar(G) + 1$.
 If $\ulzo(G)>\zbar(G)$, then $\ulzo(G)=\zzo(G)$.
\end{cor}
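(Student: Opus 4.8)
The plan is to recognize that Corollary~\ref{c:upper-con-z} is exactly the instance $X = \Z$ of Proposition~\ref{p:upper-con}, so the entire argument reduces to checking that the zero-forcing-specific notation of Section~\ref{ss:Z-TAR} matches the universal notation of Section~\ref{ss:univ} and then invoking the proposition. Recall that $\Z(G)$ was noted to be an $X$-set parameter immediately after Definition~\ref{X-set-param}, so Proposition~\ref{p:upper-con} applies with $X = \Z$.

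First I would record the notational dictionary. Comparing Definitions~\ref{d:ztar} and~\ref{d:zzo} with the definitions in Section~\ref{ss:univ}, the zero forcing sets of $G$ play the role of the $X$-sets, so $\ztar(G) = \xtar(G)$ and $\ztar_k(G) = \xtar_k(G)$ when $X = \Z$. Since the minimal (respectively, minimum) zero forcing sets are precisely the minimal (respectively, minimum) $X$-sets for $X = \Z$, we get $\zbar(G) = \xbar(G)$ (both are the maximum cardinality of a minimal such set), $\zzo(G) = \xxo(G)$, and $\ulzo(G) = \ulxo(G)$. With these identifications, each symbol appearing in Corollary~\ref{c:upper-con-z} is the $X = \Z$ specialization of the corresponding symbol in Proposition~\ref{p:upper-con}.

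The three assertions then follow directly from the proposition: the first from the monotonicity-of-connectedness argument (extend minimal zero forcing sets to the two given larger sets and concatenate paths), the second from the $k = \zbar(G)+1$ case combined with the bound $\zzo(G)\le\zbar(G)+1$, and the third from the first assertion. I expect no genuine obstacle here; the only point requiring care is the notational matchup between the two definition schemes, so the proof is essentially a single sentence once that dictionary is in place: apply Proposition~\ref{p:upper-con} with $X = \Z$.
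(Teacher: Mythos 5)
Your proposal matches the paper exactly: the paper offers no separate proof of Corollary~\ref{c:upper-con-z}, presenting it as the immediate specialization of Proposition~\ref{p:upper-con} to $X=\Z$, which is precisely your argument. The only nit is that the equality $\zzo(G)=\zbar(G)+1$ in the second assertion needs the \emph{lower} bound $\zzo(G)\ge\zbar(G)+1$ from Theorem~\ref{t:univ-z1} (combined with $\zzo(G)\le\zbar(G)+1$ from the first assertion), not the upper bound you cite, though the paper's own wording has the same slip.
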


\begin{ex} Let $n\geq 4$ and consider the path $P_n$ with vertices in path order. Then $S\subseteq V(P_n)$ is a zero forcing set if and only if $S$ contains an endpoint or $S$ contains two consecutive vertices in $S$. Hence  the set $\{2,3\}$ is a zero forcing set, but is not adjacent to any minimum zero forcing set in $\ztar(P_n)$. Thus $\zbar(G)=2$.  It is easy to see that $\ulzo(P_n) =3$, so $z_0(P_n)=\ulzo(P_n)=3$.
\end{ex}

For TAR reconfiguration of the $X$-set parameters domination and power domination, examples are known  such that $\xxo(G)$ exceeds the lower bound $\xbar(G)+1$ (see \cite{HS14} and \cite{PD-recon}). Naturally, such examples are specific to the $X$-set parameter being studied, and knowing examples for one $X$-set parameter does not generally help construct such examples for other $X$-set parameters.   Next we construct a family of examples such that $\zbar(G)+r\le \zzo(G)<\min\{\zbar(G)+\Z(G),|V(G)|\}$ for $r\ge 2$.  Software such as \cite{sage:z-recon} is a useful tool for finding examples (and found $H(2)$, which we then generalized).

Define $H(r)$ to be the graph with $2r+4$ vertices such that both sets $V_1=\{1,\dots,r+2\}$ and $V_2=\{r+3,\dots,2r+4\}$ form cliques and there is a matching between the vertices $\{1,\dots,r\}$ and $\{r+3,\dots,2r+2\}$. The graphs $H(2)$ and $\ztar(H(2))$ are shown in Figure \ref{fig:TARdisconnect}.  

\begin{figure}[h!]
	\begin{center}
		\begin{tikzpicture}[scale=.65,every node/.style={draw,shape=circle,outer sep=2pt,inner sep=1pt,minimum size=.2cm}]   
	
	\node[fill=none]  (1) at (1,1) {\small{1}};
	\node[fill=none]  (2) at (1,-1) {\small{2}};
	\node[fill=none]  (3) at (3,-1) {\small{3}};
	\node[fill=none]  (4) at (3,1) {\small{4}};
	\node[fill=none]  (5) at (-1,1) {\small{5}};
	\node[fill=none]  (6) at (-1,-1) {\small{6}};
	\node[fill=none]  (8) at (-3,-1) {\small{8}};
	\node[fill=none]  (7) at (-3,1) {\small{7}};
	\draw[thick] (3)--(1)--(4)--(3)--(2)--(6)--(8)--(7)--(5)--(8);
	\draw[thick] (7)--(6)--(5)--(1)--(2)--(4);
	\end{tikzpicture}
		  \scalebox{.8}{	\begin{tikzpicture}[scale=.65,every node/.style={draw,shape=circle,outer sep=2pt,inner sep=1pt,minimum
					size=.2cm}]
				
				\node[fill=black]  (35678) at (0, 7) {};
				\node[draw=none] at (0, 7.5){\small{$\{3,5,6,7,8\}$}};
				\node[fill=black]  (3567) at (2.5, 4) {};
				\node[draw=none, rotate=-70] at (3, 4){\small{$\{3,5,6,7\}$}};
				\node[fill=black]  (34567) at (2.5, 0) {};
				\node[draw=none, rotate=-90] at (2.9, 0){\small{$\{3,4,5,6,7\}$}};
				\node[fill=black]  (4567) at (2.5, -4) {};
				\node[draw=none, rotate=250] at (3, -4){\small{$\{4,5,6,7\}$}};
				\node[fill=black]  (45678) at (0, -7) {};
				\node[draw=none] at (0, -7.5){\small{$\{4,5,6,7,8\}$}};
				
				\node[fill=black]  (4568) at (-2.5, -4) {};
				\node[draw=none, rotate=110] at (-3, -4){\small{$\{4,5,6,8\}$}};
					\node[fill=black]  (34568) at (-2.5, 0) {};
				\node[draw=none,rotate=90] at (-2.9, 0){\small{$\{3,4,5,6,8\}$}};
				\node[fill=black]  (3568) at (-2.5, 4) {};
				\node[draw=none, rotate=70] at (-3, 4){\small{$\{3,5,6,8\}$}};
				
				\node[fill=black]  (3678) at (4, 4.5) {};
				\node[draw=none] at (5.3, 4.5){\small{$\{3,6,7,8\}$}};
				\node[fill=black]  (34678) at (4, 0) {};
				\node[draw=none, rotate=-90] at (4.5, 0){\small{$\{3,4,6,7,8\}$}};
				\node[fill=black]  (4678) at (4, -4.5) {};
				\node[draw=none] at (5.3, -4.5){\small{$\{4,6,7,8\}$}};
				
				\node[fill=black]  (4578) at (-4, -4.5) {};
				\node[draw=none] at (-5.3, -4.5){\small{$\{4,5,7,8\}$}};
				\node[fill=black]  (34578) at (-4, 0) {};
				\node[draw=none, rotate=90] at (-4.5, 0){\small{$\{3,4,5,7,8\}$}};
				\node[fill=black]  (3578) at (-4, 4.5) {};
				\node[draw=none] at (-5.3, 4.5){\small{$\{3,5,7,8\}$}};
			
			\node[fill=black]  (13678) at (3, 6.5) {};
			\node[draw=none, rotate=-60] at (3.5, 6.5){\small{$\{1,3,6,7,8\}$}};
			\node[fill=black]  (23678) at (5, 2.5) {};
			\node[draw=none, rotate=-90] at (5.5, 2.5){\small{$\{2,3,6,7,8\}$}};
			
				\node[fill=black]  (14678) at (5, -2.5) {};
			\node[draw=none, rotate=-90] at (5.5, -2.5){\small{$\{1,4,6,7,8\}$}};
			\node[fill=black]  (24678) at (3, -6.5) {};
			\node[draw=none, rotate=240] at (3.5, -6.5){\small{$\{2,4,6,7,8\}$}};
				
				\node[fill=black]  (13578) at (-3, 6.5) {};
				\node[draw=none, rotate=60] at (-3.5, 6.5){\small{$\{1,3,5,7,8\}$}};
				\node[fill=black]  (23578) at (-5, 2.5) {};
				\node[draw=none, rotate=90] at (-5.5, 2.5){\small{$\{2,3,5,7,8\}$}};
				
				\node[fill=black]  (14578) at (-5, -2.5) {};
				\node[draw=none, rotate=90] at (-5.5, -2.5){\small{$\{1,4,5,7,8\}$}};
				\node[fill=black]  (24578) at (-3, -6.5) {};
				\node[draw=none, rotate=120] at (-3.5, -6.5){\small{$\{2,4,5,7,8\}$}};
				
					\node[fill=black]  (13568) at (0, 5) {};
				\node[draw=none] at (0.2, 5.5){\small{$\{1,3,5,6,8\}$}};
				\node[fill=black]  (23568) at (0, 2) {};
				\node[draw=none] at (0.2, 2.5){\small{$\{2,3,5,6,8\}$}};
				
				\node[fill=black]  (13567) at (0, 3.5) {};
				\node[draw=none] at (0.2, 4){\small{$\{1,3,5,6,7\}$}};
				\node[fill=black]  (23567) at (0, 0.5) {};
				\node[draw=none] at (0.2, 1){\small{$\{2,3,5,6,7\}$}};
				
				\node[fill=black]  (14568) at (0, -0.5) {};
				\node[draw=none] at (0.2, -1){\small{$\{1,4,5,6,8\}$}};
				\node[fill=black]  (24568) at (0, -3.5) {};
				\node[draw=none] at (0.2, -4){\small{$\{2,4,5,6,8\}$}};
				
				\node[fill=black]  (14567) at (0, -2) {};
				\node[draw=none] at (0.2, -2.5){\small{$\{1,4,5,6,7\}$}};
				\node[fill=black]  (24567) at (0, -5) {};
				\node[draw=none] at (0.2, -5.5){\small{$\{2,4,5,6,7\}$}};
				
				\draw[thick] (35678)--(3567)--(34567)--(4567)--(45678)--(4568)--(34568)--(3568)--(35678)--(3678)--(34678)--(4678)--(45678)--(4578)--(34578)--(3578)--(35678);
				\draw[thick] (13678)--(3678)--(23678);
				\draw[thick] (14678)--(4678)--(24678);
				\draw[thick] (13578)--(3578)--(23578);
				\draw[thick] (14578)--(4578)--(24578);
				\draw[thick] (13568)--(3568)--(23568);
				\draw[thick] (14568)--(4568)--(24568);	
				\draw[thick] (14567)--(4567)--(24567);	
				\draw[thick] (13567)--(3567)--(23567);		
				
\node[fill=black]  (12347) at (14, 7) {};
\node[draw=none] at (14, 7.5){\small{$\{1,2,3,4,7\}$}};
\node[fill=black]  (1237) at (16.5, 4) {};
\node[draw=none, rotate=-70] at (17, 4){\small{$\{1,2,3,7\}$}};
\node[fill=black]  (12378) at (16.5, 0) {};
\node[draw=none, rotate=-90] at (16.9, 0){\small{$\{1,2,3,7,8\}$}};
\node[fill=black]  (1238) at (16.5, -4) {};
\node[draw=none, rotate=250] at (17, -4){\small{$\{1,2,3,8\}$}};
\node[fill=black]  (12348) at (14, -7) {};
\node[draw=none] at (14, -7.5){\small{$\{1,2,3,4,8\}$}};

\node[fill=black]  (1248) at (11.5, -4) {};
\node[draw=none, rotate=110] at (11, -4){\small{$\{1,2,4,8\}$}};
\node[fill=black]  (12478) at (11.5, 0) {};
\node[draw=none,rotate=90] at (11.1, 0){\small{$\{1,2,4,7,8\}$}};
\node[fill=black]  (1247) at (11.5, 4) {};
\node[draw=none, rotate=70] at (11, 4){\small{$\{1,2,4,7\}$}};

\node[fill=black]  (2347) at (18, 4.5) {};
\node[draw=none] at (19.3, 4.5){\small{$\{2,3,4,7\}$}};
\node[fill=black]  (23478) at (18, 0) {};
\node[draw=none, rotate=-90] at (18.5, 0){\small{$\{2,3,4,7,8\}$}};
\node[fill=black]  (2348) at (18, -4.5) {};
\node[draw=none] at (19.3, -4.5){\small{$\{2,3,4,8\}$}};

\node[fill=black]  (1348) at (10, -4.5) {};
\node[draw=none] at (8.7, -4.5){\small{$\{1,3,4,8\}$}};
\node[fill=black]  (13478) at (10, 0) {};
\node[draw=none, rotate=90] at (9.5, 0){\small{$\{1,3,4,7,8\}$}};
\node[fill=black]  (1347) at (10, 4.5) {};
\node[draw=none] at (8.7, 4.5){\small{$\{1,3,4,7\}$}};

\node[fill=black]  (23457) at (17, 6.5) {};
\node[draw=none, rotate=-60] at (17.5, 6.5){\small{$\{2,3,4,5,7\}$}};
\node[fill=black]  (23467) at (19, 2.5) {};
\node[draw=none, rotate=-90] at (19.5, 2.5){\small{$\{2,3,4,6,7\}$}};

\node[fill=black]  (23458) at (19, -2.5) {};
\node[draw=none, rotate=-90] at (19.5, -2.5){\small{$\{2,3,4,5,8\}$}};
\node[fill=black]  (23468) at (17, -6.5) {};
\node[draw=none, rotate=240] at (17.5, -6.5){\small{$\{2,3,4,6,8\}$}};

\node[fill=black]  (13457) at (11, 6.5) {};
\node[draw=none, rotate=60] at (10.5, 6.5){\small{$\{1,3,4,5,7\}$}};
\node[fill=black]  (13467) at (9, 2.5) {};
\node[draw=none, rotate=90] at (8.5, 2.5){\small{$\{1,3,4,6,7\}$}};

\node[fill=black]  (13458) at (9, -2.5) {};
\node[draw=none, rotate=90] at (8.5, -2.5){\small{$\{1,3,4,5,8\}$}};
\node[fill=black]  (13468) at (11, -6.5) {};
\node[draw=none, rotate=120] at (10.5, -6.5){\small{$\{1,3,4,6,8\}$}};

\node[fill=black]  (12457) at (14, 5) {};
\node[draw=none] at (14.2, 5.5){\small{$\{1,2,4,5,7\}$}};
\node[fill=black]  (12467) at (14, 2) {};
\node[draw=none] at (14.2, 2.5){\small{$\{1,2,4,6,7\}$}};

\node[fill=black]  (12357) at (14, 3.5) {};
\node[draw=none] at (14.2, 4){\small{$\{1,2,3,5,7\}$}};
\node[fill=black]  (12367) at (14, 0.5) {};
\node[draw=none] at (14.2, 1){\small{$\{1,2,3,6,7\}$}};
\node[fill=black]  (12458) at (14, -0.5) {};
\node[draw=none] at (14.2, -1){\small{$\{1,2,4,5,8\}$}};
\node[fill=black]  (12468) at (14, -3.5) {};
\node[draw=none] at (14.2, -4){\small{$\{1,2,4,6,8\}$}};

\node[fill=black]  (12358) at (14, -2) {};
\node[draw=none] at (14.2, -2.5){\small{$\{1,2,3,5,8\}$}};
\node[fill=black]  (12368) at (14, -5) {};
\node[draw=none] at (14.2, -5.5){\small{$\{1,2,3,6,8\}$}};

\draw[thick] (12347)--(1237)--(12378)--(1238)--(12348)--(1248)--(12478)--(1247)--(12347)--(2347)--(23478)--(2348)--(12348)--(1348)--(13478)--(1347)--(12347);
\draw[thick] (23457)--(2347)--(23467);
\draw[thick] (23458)--(2348)--(23468);
\draw[thick](13457)--(1347)--(13467);
\draw[thick] (13458)--(1348)--(13468);
\draw[thick] (12457)--(1247)--(12467);
\draw[thick] (12357)--(1237)--(12367);	

\draw[thick] (12458)--(1248)--(12468);	
\draw[thick] (12358)--(1238)--(12368);							
		\end{tikzpicture}}
\end{center}
    \caption{The graphs $H(2)$ (top) and $\ztar_5(H(2))$ (bottom)}  \label{fig:TARdisconnect}
\end{figure}

\begin{prop}\label{p:upper-con-z}
For $r\ge 2$, $\Z(H(r))=\zbar(H(r))=r+2$ and $\ulzo(H(r))=\zzo(H(r)) = 2r+2= \zbar(H(r)) + r$.
\end{prop}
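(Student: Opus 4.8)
The plan is to pin down $\Z(H(r))=\zbar(H(r))=r+2$ first, then locate the connectedness threshold, at which point Corollary~\ref{c:upper-con-z} collapses the two connectedness parameters into one computation. Throughout I write $V_1,V_2$ for the two cliques, I call the two degree-$(r+1)$ vertices of each clique (the ones missing from the matching) its \emph{private pair}, and I use the automorphism of $H(r)$ that interchanges $V_1$ and $V_2$.

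I would begin with two elementary forcing facts. First, if both vertices of a private pair are white, neither can ever be forced: their only neighbors lie inside their own clique, and each such neighbor sees two white vertices. Hence every zero forcing set meets the private pair of each clique. Second, and crucially, a \emph{completion-order} observation: a force along a matching edge $i\to i'$ requires all of $i$'s clique to be blue (the clique-neighbors of $i$ together with $i'$ are its only neighbors), so inter-clique forces into a clique can happen only after the other clique is complete. Therefore in any forcing process one clique, say $V_a$, is completed strictly first, and up to that moment $V_a$ is filled using only forces internal to $V_a$; the first such internal force needs $r+1$ blue vertices of $V_a$, none of which can have been forced yet, so $|S\cap V_a|\ge r+1$. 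Combining with the private-pair fact for the other clique gives $|S|\ge(r+1)+1=r+2$, and $\{r+1\}\cup(V_2\setminus\{2r+4\})$ achieves it, so $\Z(H(r))=r+2$. For $\zbar$ I would show every minimal zero forcing set has size exactly $r+2$: assuming (by the automorphism) that $V_2$ completes first, a matched vertex of $V_1$ in $S$ is removable since it is re-forced across its matching edge once $V_2$ is complete, a second private vertex of $V_1$ is removable, and if $V_2\subseteq S$ any matched vertex of $V_2$ is removable; minimality then forces $|S\cap V_1|=1$ and $|S\cap V_2|=r+1$.

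Next I would establish that $\ztar_k(H(r))$ is disconnected for $r+2\le k\le 2r+1$ via an invariant. The completion-order fact says every zero forcing set satisfies $\max\{|S\cap V_1|,|S\cap V_2|\}\ge r+1$, while $|S|\le 2r+1$ prevents both intersections from reaching $r+1$; so each vertex of $\ztar_k(H(r))$ is colored by the unique clique $V_a$ with $|S\cap V_a|\ge r+1$. A single token move cannot change this color: the only way to drop $|S\cap V_a|$ below $r+1$ is to delete a vertex of $V_a$ when $|S\cap V_a|=r+1$, and the resulting set meets neither clique in $r+1$ vertices, hence is not a zero forcing set and not a neighbor in $\ztar_k$. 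Since zero forcing sets of each color and of size $r+2\le k$ exist, $\ztar_k(H(r))$ is disconnected, giving $\ulzo(H(r))\ge 2r+2$.

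Finally I would prove $\ztar_{2r+2}(H(r))$ is connected, which is the main obstacle. The two engines are that any superset of a zero forcing set is a zero forcing set (so tokens may be added freely within budget), and that once $V_2\subseteq S$ the set $S$ is a zero forcing set exactly when it meets the private pair of $V_1$ (so the matched vertices of $V_1$ may be stripped away). Using these I would route every zero forcing set of size at most $2r+2$ to one of the hubs $V_2\cup\{r+1\}$ and $V_1\cup\{2r+3\}$: a set with $|S\cap V_1|\le r$ grows to contain all of $V_2$ (staying within $2r+2$) and then sheds $V_1$-tokens down to the hub, while a balanced set (size $2r+2$ with $r+1$ tokens in each clique) first deletes one matched clique vertex to become unbalanced. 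The two hubs are then joined by an explicit walk through $(V_1\cup V_2)\setminus\{r+2,2r+4\}$, adding the matched vertices of $V_1$ on the way in and deleting the matched vertices of $V_2$ on the way out, never exceeding $2r+2$ tokens and remaining a zero forcing set at every step. This gives $\ulzo(H(r))\le 2r+2$, hence $\ulzo(H(r))=2r+2$, and since $2r+2>r+2=\zbar(H(r))$, Corollary~\ref{c:upper-con-z} yields $\zzo(H(r))=\ulzo(H(r))=2r+2$. The delicate point in this last step is maintaining the zero forcing property while the token count is pinned at the ceiling $2r+2$, where additions are forbidden and only a carefully chosen deletion keeps the walk inside $\ztar_{2r+2}(H(r))$.
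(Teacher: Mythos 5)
Your proof is correct and follows essentially the same route as the paper: both hinge on the characterization that $S$ is a zero forcing set of $H(r)$ exactly when one clique contains at least $r+1$ tokens of $S$ and $S$ meets the unmatched (``private'') pair of the other clique, from which $\Z=\zbar=r+2$, the two-component structure of $\ztar_{2r+1}(H(r))$, and the connectedness of $\ztar_{2r+2}(H(r))$ all follow. Your hub-and-walk verification of the connectedness of $\ztar_{2r+2}(H(r))$ is more explicit than the paper's, which asserts that step directly from the characterization, but it is the same underlying argument.
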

\bpf   Observe that the vertices $r+1,r+2,2r+3$ and $2r+4$ have degree $r+1$ and all other vertices of $H(r)$ have degree $r+2$ so $\delta(H(r))=r+1$.  Let $U_1=\{r+1,r+2\}$ and $U_2=\{2r+3,2r+4\}$. If $v\in V_i$, then $|N_{H(r)}(v)\cap V_i|=r+1$.  Let $S\subset V(H(r))$ and let $S_i=S\cap V_i$ for $i=1,2$. Assuming that $|S_1|\ge |S_2|$, we show that $S$ is a zero forcing set of $H(r)$ if and only if $|S_1|\ge r+1$ and $|S_2\cap U_2|\ge 1$. Once this is established, $\Z(H(r))=\zbar(H(r))=r+2$ with a minimum zero forcing set $S$ having ($|S_1|= r+1$ and $|S_2|= 1$) or ($|S_2|= r+1$ and $|S_1|= 1$).  Furthermore, $\ztar_{2r+1}(H(r))$ has 2 components, one containing zero forcing sets $S$ with $|S_1|>|S_2|$ and the other with $|S_2|>|S_1|$, whereas  $\ztar_{2r+2}(H(r))$ is connected.  

Suppose first that $|S_1|=r+1$, $|S_2|=1$, and $S_2\subset U_2$ (the argument where $|S_2|=r+1$ is similar).  There must be a vertex $u\in U_1\cap S_1$, and $u$ can force the one white vertex of $S_1$.  Then $i\to r+2+i$ for $i=1,\dots,r$.  Finally any blue vertex of $V_2$ can force the one remaining white vertex in $V_2$.  Thus $S$ is a zero forcing set. 

Now suppose $S$ is a zero forcing set.  Let $v$ be the vertex that performs the first force, and without loss of generality $v\in V_1$.  Since $v$ and all but one of its neighbors must be in $S$, $S$ contains at least $r+1$ of the vertices in $S$.  Since at most $r$ vertices of $V_2$  can be forced by vertices in $V_1$, $S$ must contain a vertex of $V_2$ that has no neighbor in $V_1$, i.e., $S$ must contain at least one vertex of $U_2$. 
\epf

A computer search on graphs up to 8 vertices (with no isolated vertices) found exactly 2 graphs $G$ such that $\zzo(G)>\ulzo(G)$. For example, the graph $H_2$ in Figure \ref{1stconn} has $\zzo(H_2) = 7$ and $\ulzo(H_2) = 5$ (because there are minimal zero forcing sets of orders 4 and 6 but not 5, and $\ztar_5(H_2)$ is  connected, see \cite{sage:z-recon}).

\begin{figure}[h!]
\centering 

	\begin{tikzpicture}[scale=.7,every node/.style={draw,shape=circle,outer sep=2pt,inner sep=1pt,minimum
			size=.2cm}]
		
		\node[fill=none]  (0) at (-1.5,2) {};
		\node[fill=none]  (1) at (0,2.5) {};
		\node[fill=none]  (2) at (1.5,2) {};
		\node[fill=none]  (3) at (-1,0.5) {};
		\node[fill=none]  (4) at (0,0) {};
		\node[fill=none]  (5) at (1,0.5) {};
		\node[fill=none]  (6) at (-1.3,-1.6) {};
		\node[fill=none]  (7) at (1.3,-1.6) {};
		
		\node[draw=none] at (-1.3, -2.1){$\small{u_1}$};
		\node[draw=none] at (1.3,-2.1){$\small{u_2}$};
		
		\draw[thick] (0)--(1)--(2)--(5)--(4)--(3)--(0)--(5)--(1)--(4)--(7)--(3)--(6)--(4)--(0);
		\draw[thick] (6)--(4)--(2)--(3)--(1);
	    \draw[thick](7)--(5)--(6);
		\end{tikzpicture}
		
    \caption{A graph $H_2$ satisfying $\zzo(H_2)>\ulzo(H_2)$ }  \label{1stconn}
\end{figure}

Next we show that the graph $H_2$ can be used to create an infinite family of graphs $H_r$ such that $\zzo(H_r)>\ulzo(H_r)$.  Observe that  $N_{H_2}(u_2)=N_{H_2}(u_1)$.   Construct $H_r$  from $H_2$ by adding vertices $u_3,\dots,u_r$ with  $N_{H_r}(u_k)=N_{H_r}(u_1)$ for $k=3,\dots,r$.  In general, vertices $u_1$ and $u_2$ in a graph $G$ are called \emph{twins} if  $N_G(u_1) = N_G(u_2)$ (what we define as twins are often called \emph{independent twins}). A \emph{set of twins} is a set  $\{u_1,\dots,u_k\}\subseteq V(G)$ such that $u_i$ and $u_j$ are twins  for all $1\le i<j\le k$.

\begin{obs} \label{obs:twin}
If $G$ is a graph with a set of twins $\{u_1,\dots ,u_r\}$, then any zero forcing set of $G$ must contain at least $r-1$ of the vertices $\{ u_1,\dots ,u_r \} $. \end{obs}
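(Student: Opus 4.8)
The plan is to argue by contradiction using the defining property of twins, namely that $N_G(u_i) = N_G(u_j)$ for all $i,j$. Suppose some zero forcing set $S$ of $G$ contains at most $r-2$ of the vertices $\{u_1,\dots,u_r\}$. Then at least two of the twins, say $u_a$ and $u_b$, lie outside $S$, so both are white at the start of the forcing process. I would then show that neither $u_a$ nor $u_b$ can ever be forced, so $G$ is never fully colored blue, contradicting the assumption that $S$ is a zero forcing set.

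The heart of the argument is the following invariant, which I would prove by induction on the number of forces performed: \emph{as long as both $u_a$ and $u_b$ are white, no force can turn either of them blue}. To see this, recall that under the standard color change rule a blue vertex $v$ forces $u_a$ only if $v \in N_G(u_a)$ and $u_a$ is the \emph{only} white neighbor of $v$. But since $u_a$ and $u_b$ are twins, $N_G(u_a) = N_G(u_b)$, so $v \in N_G(u_a)$ implies $u_b \in N_G(v)$ as well. If $u_b$ is still white, then $v$ has at least two white neighbors, $u_a$ and $u_b$, and hence cannot force $u_a$; the symmetric statement holds for $u_b$. Thus no force changes the color of $u_a$ or $u_b$ while both remain white. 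Since both are white initially, the invariant persists through every step, so $u_a$ and $u_b$ stay white forever, giving the desired contradiction.

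I do not expect a genuine obstacle here, since the argument is a short invariant maintained across the sequential forcing process; the only point requiring care is to phrase the induction so that it correctly accounts for \emph{every} possible force at each step (rather than a single fixed force), which the twin condition $N_G(u_a)=N_G(u_b)$ handles uniformly. It is worth noting in passing that the twins are automatically nonadjacent (an \emph{independent} set), since $u_b \notin N_G(u_b) = N_G(u_a)$; however, this fact is not needed for the proof, as the entire argument rests only on the equality of neighborhoods.
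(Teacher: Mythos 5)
Your proof is correct and is exactly the standard argument the paper leaves implicit (the statement is given as an Observation without proof): two white twins block each other from ever being forced, since any potential forcer of one has the other as a second white neighbor. Your side remark that the twins are automatically nonadjacent is also accurate and consistent with the paper's convention of independent twins.
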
 

\begin{prop}\label{p:ztwin} 
Let $G$ be a graph that has a set of twins $T=\{u_1,\dots ,u_r\}$ with $r\ge 3$,  and let $G_i = G-u_i$. If $S_i$ is a zero forcing set of $G_i$, then $S=S_i\cup\{u_i\}$ is a zero forcing set of $G$.  If $S$ is a zero forcing set of $G$ and $ u_i\in S$, then  $S_i=S\setminus\{u_i\}$ is a zero forcing set of $G_i$.  Thus there is a bijection between zero forcing sets of $G_i$ and zero forcing sets of $G$ that contain $u_i$, and a zero forcing set 
 $S_i$ of $G_i$ is minimal if and only if $S=S_i\cup\{u_i\}$ is a minimal zero forcing set of $G$.
\end{prop}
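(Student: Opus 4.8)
The plan is to prove the four assertions in order: the two set-level implications, the resulting bijection, and finally the minimality equivalence. Throughout I would write $N$ for the common neighborhood $N_G(u_1)=\cdots=N_G(u_r)$. Since the $u_k$ are independent twins, no $u_k$ lies in $N$, so in $G_i$ each surviving twin $u_j$ (with $j\ne i$) still has neighborhood exactly $N$.

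For the first implication, suppose $S_i$ is a zero forcing set of $G_i$ and set $S=S_i\cup\{u_i\}$. I would run a forcing process for $S_i$ in $G_i$ and replay it verbatim in $G$ starting from $S$. The point is that $u_i\in S$ is blue from the start, so for every vertex $v\ne u_i$ the set of \emph{white} neighbors of $v$ is identical in $G$ and in $G_i$ (the only possible extra neighbor, namely $u_i$, is never white). Hence every force valid in $G_i$ remains valid in $G$; the replay colors all of $V(G_i)$, and $u_i$ is already blue, so $S$ is a zero forcing set of $G$.

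For the converse, suppose $S$ is a zero forcing set of $G$ with $u_i\in S$ and put $S_i=S\setminus\{u_i\}$. By Observation \ref{obs:twin}, $S$ contains at least $r-1$ of the twins, so (using $r\ge 3$) there is a twin $u_j\in S_i$ with $j\ne i$. I would again take a forcing process for $S$ in $G$ and transfer it to $G_i$: every force performed by a vertex other than $u_i$ stays valid in $G_i$ for the reason above, and the at most one force performed by $u_i$ itself, say $u_i\to w$, is instead performed by $u_j$, which is legal because $u_j$ is blue and has neighborhood $N=N_G(u_i)$ in $G_i$. Here one uses that, since all twins share the neighborhood $N$, at most one twin ever forces in a given process (once a twin forces, all of $N$ is blue), so $u_j$ is otherwise idle and this reassignment creates no conflict. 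This colors all of $V(G_i)$, so $S_i$ is a zero forcing set of $G_i$. The two implications then exhibit $S_i\mapsto S_i\cup\{u_i\}$ and $S\mapsto S\setminus\{u_i\}$ as mutually inverse maps, giving the claimed bijection.

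The hard part is the minimality equivalence, and the key is to reduce it to single-element removals. Recall that $S$ is a minimal zero forcing set iff $S\setminus\{x\}$ fails to be a zero forcing set for every $x\in S$. For $x\in S_i$, the bijection applied to $S_i\setminus\{x\}$ shows that $(S_i\setminus\{x\})\cup\{u_i\}$ is a zero forcing set of $G$ iff $S_i\setminus\{x\}$ is one of $G_i$, so the removals at vertices of $S_i$ match up exactly. The only remaining removal is $x=u_i$, which gives $S\setminus\{u_i\}=S_i$, so the whole equivalence comes down to the single statement: \emph{if $S_i$ is a minimal zero forcing set of $G_i$, then $S_i$ is not a zero forcing set of $G$.} I would prove the contrapositive using the twin symmetry. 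If $S_i$ were a zero forcing set of $G$ (necessarily with $u_i\notin S_i$), then Observation \ref{obs:twin} forces all other twins into $S_i$, so some $u_j\in S_i$ has $j\ne i$. The transposition $\sigma$ swapping $u_i$ and $u_j$ and fixing everything else is a graph automorphism of $G$ (twins have equal neighborhoods and are nonadjacent), so $\sigma(S_i)=(S_i\setminus\{u_j\})\cup\{u_i\}$ is again a zero forcing set of $G$, now containing $u_i$; by the converse implication already proved, $\sigma(S_i)\setminus\{u_i\}=S_i\setminus\{u_j\}$ is a zero forcing set of $G_i$ and a proper subset of $S_i$, so $S_i$ is not minimal. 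Combining this with the single-removal bookkeeping yields that $S_i$ is minimal in $G_i$ if and only if $S$ is minimal in $G$, completing the proof.
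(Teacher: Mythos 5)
Your proof is correct, and for the two set-level implications it follows essentially the same route as the paper: transfer a set of forces between $G$ and $G_i$, using the key observation that two blue twins cannot both perform a force. Two points where you go beyond the paper are worth noting. First, in the converse direction the paper shows that at least one of two twins $u_i,u_j\in S$ does not force and then says ``we may assume $u_i$ does not perform a force''; since the index $i$ is fixed by the hypothesis $u_i\in S$, that reduction really requires your explicit reassignment of the force $u_i\to w$ to $u_j$ (legal because $u_j$ is blue, shares the neighborhood $N$, and has not already forced), so your version is the more careful one. Second, the paper dismisses the minimality equivalence as ``immediate,'' whereas it genuinely requires handling the removal of $u_i$ itself from $S=S_i\cup\{u_i\}$: one must rule out $S_i$ being a zero forcing set of $G$ when $S_i$ is minimal in $G_i$. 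Your reduction to single-element removals, combined with the twin-transposition automorphism $\sigma$ (which turns a hypothetical zero forcing set $S_i$ of $G$ with $u_i\notin S_i$ into one containing $u_i$ and thence produces the proper subset $S_i\setminus\{u_j\}$ as a zero forcing set of $G_i$), is a clean and correct way to supply that missing step.
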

 \bpf It is immediate that $S_i$ being a zero forcing set of $G_i$ implies $S=S_i\cup\{ u_i\}$ is a zero forcing set of $G$ (without any assumption about twins). Let $S$ be a zero forcing set of $G$. Since $r\ge 3$, $S$ must contain at least two vertices in $T$, say $u_i$ and $u_j$. Choose a set of forces $\clf$ that color every vertex of $G$ blue.  We show that at least one of $u_i$ and $u_j$ does not perform a force. Neither $u_i$ nor $u_j$ can perform a force until all but one of their common set of neighbors are blue.  So if $u_i$ forces the one white neighbor blue, then $u_j$ cannot perform a force and if $u_j$ forces, then $u_i$ can't (it is possible neither forces). Thus we may assume $u_i$ does not perform a force. Then $S_i=S\setminus\{u_i\}$ is a zero forcing set of $G_i$ using the set of forces $\clf$. 
 The last sentence is immediate. \epf

 The technique of removing a nonforcing vertex $v$ from a zero forcing set $S$ of $G$  to obtain a zero forcing set $S\setminus\{v\}$ is well known (see, for example, the proof of Theorem 2.7 in \cite{EHHLR12}).  When there are only two twins, it is not true that $S_i=S\setminus\{u_i\}$ must be a zero forcing set of $G_i$, because it is possible  that only one of $u_1$ and $u_2$ is in $S$ and that vertex is required to perform a force. This is illustrated in the next example.

\begin{ex}
Consider the \emph{double star} $DS(2,2)$ constructed by adding an edge between the centers of two disjoint copies of $K_{1,2}$.  If the leaves of one star are $u_1$ and $u_2$, then $\Z(DS(2,2))=2=\Z(DS(2,2)-u_1)$ (see \cite{sage:z-recon}).
\end{ex} 

\begin{lem}\label{l:twin-conn}
Let $G$ be a graph that has a set of twins $T=\{u_1,\dots ,u_r\}$ with $r\ge 3$,  and let $G_i = G-u_i$.  If $\ztar_{k}(G_i)$ is connected for some $i$, then $\ztar_{k+1}(G)$ is connected.
\end{lem}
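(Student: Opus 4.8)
The plan is to fix inside $\ztar_{k+1}(G)$ a connected ``core'' and then show that every zero forcing set of cardinality at most $k+1$ is joined to it. Because the twins are interchangeable, the transposition swapping $u_i$ and $u_r$ is an automorphism of $G$ carrying $G_i=G-u_i$ isomorphically onto $G_r=G-u_r$, so $\ztar_k(G_i)\cong\ztar_k(G_r)$ and we may assume the index for which $\ztar_k(G_i)$ is connected is $i=r$. Take the core to be the induced subgraph $C$ of $\ztar_{k+1}(G)$ on those zero forcing sets of $G$ that contain $u_r$. First I would check that $S\mapsto S\setminus\{u_r\}$ is a graph isomorphism from $C$ onto $\ztar_k(G_r)$: Proposition \ref{p:ztwin} supplies the vertex bijection (zero forcing sets of $G$ containing $u_r$ correspond to zero forcing sets of $G_r$, with cardinality dropping by one), and since $u_r$ belongs to both endpoints of any edge of $C$ we have $S\ominus S'=(S\setminus\{u_r\})\ominus(S'\setminus\{u_r\})$, so adjacency and the size bound are preserved in both directions. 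Hence $C$ is connected.

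Next I would connect an arbitrary zero forcing set $S$ of $G$ with $|S|\le k+1$ to $C$. If $u_r\in S$ then $S\in C$. Otherwise Observation \ref{obs:twin} forces $\{u_1,\dots,u_{r-1}\}\subseteq S$. When $|S|\le k$, the superset $S\cup\{u_r\}$ is again a zero forcing set by Definition \ref{X-set-param}(1), has cardinality at most $k+1$, contains $u_r$, and is adjacent to $S$, so $S$ reaches $C$ in one step. The delicate case is $|S|=k+1$ with $u_r\notin S$: here I cannot add $u_r$ without exceeding the size bound, and I cannot delete any twin $u_j$ with $1\le j\le r-1$, since $S\setminus\{u_j\}$ would contain only $r-2$ twins and so fail to be a zero forcing set by Observation \ref{obs:twin}. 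The way out is to delete a \emph{non-twin}: provided $S$ is not a minimal zero forcing set, some $z\in S$ has $S\setminus\{z\}$ a zero forcing set, and by the previous sentence $z\notin T$; thus $S\setminus\{z\}$ still contains $\{u_1,\dots,u_{r-1}\}$, has cardinality $k$, reaches $C$ by the earlier case, and the edge joining $S$ and $S\setminus\{z\}$ connects $S$ to $C$.

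Everything therefore reduces to the key step, namely showing $G$ has no minimal zero forcing set of cardinality exactly $k+1$; this is the only place the hypothesis enters and I expect it to be the main obstacle. Suppose $M$ were such a set. If $u_r\in M$, then Proposition \ref{p:ztwin} makes $M\setminus\{u_r\}$ a minimal zero forcing set of $G_r$ of cardinality $k$; such a set has no neighbors in $\ztar_k(G_r)$, so $\ztar_k(G_r)$ cannot be connected once it has at least two vertices, which it does because $G_r$ has no isolated vertices and hence more than one minimum zero forcing set by \cite{smallparam} (note $\Z(G_r)\le k$). This contradicts the hypothesis. If instead $u_r\notin M$, then $\{u_1,\dots,u_{r-1}\}\subseteq M$, and the twin transposition swapping $u_1$ and $u_r$ sends $M$ to the minimal zero forcing set $(M\setminus\{u_1\})\cup\{u_r\}$, which contains $u_r$ and reduces to the previous case. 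Combining the three steps, $C$ is connected and every vertex of $\ztar_{k+1}(G)$ is joined to $C$, so $\ztar_{k+1}(G)$ is connected. A small technical point to confirm along the way is that $G_r$ inherits from $G$ the absence of isolated vertices: twins are nonadjacent and share neighborhoods, so deleting $u_r$ isolates neither the remaining twins nor any common neighbor.
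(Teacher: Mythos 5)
Your proof is correct, but it follows a genuinely different route from the paper's. The paper argues directly: given any two zero forcing sets $S,S'$ of $G$ of cardinality at most $k+1$, each omits at most one twin by Observation \ref{obs:twin}, so since $r\ge 3$ they share a common twin $u_i$; by Proposition \ref{p:ztwin} the sets $S\setminus\{u_i\}$ and $S'\setminus\{u_i\}$ are zero forcing sets of $G_i$ of cardinality at most $k$, and a path between them in $\ztar_k(G_i)$ (all the $\ztar_k(G_i)$ being isomorphic) lifts to a path in $\ztar_{k+1}(G)$ by re-inserting $u_i$ throughout. Because the shared twin is chosen adaptively for each pair, the paper never has to handle sets avoiding a fixed twin and so never needs your ``key step.'' Your argument instead fixes $u_r$, identifies the sets containing $u_r$ with $\ztar_k(G_r)$ (a clean isomorphism worth recording in its own right), and attaches everything else to that core; this forces you to exclude minimal zero forcing sets of $G$ of cardinality exactly $k+1$, which you do correctly by projecting such a set to an isolated vertex of $\ztar_k(G_r)$. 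That extra step is a nice by-product --- it shows the hypothesis actually forbids minimal zero forcing sets of $G$ of size $k+1$ --- but it is also where you quietly import a hypothesis the lemma does not state: to guarantee $\ztar_k(G_r)$ has at least two vertices you invoke \cite{smallparam}, which requires $G_r$ (hence $G$) to have no isolated vertices. That assumption is harmless in context (the paper works throughout with graphs having no isolated vertices, and the lemma is applied only to the connected graphs $H_r$), and the paper's direct argument avoids it entirely; if you want your version to match the lemma as literally stated, either add the no-isolated-vertices hypothesis or dispose of the degenerate cases separately.
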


\begin{proof} The graphs  $G_i$ are isomorphic and hence the $\ztar_{k}(G_i)$ are isomorphic as well.  Thus if $\ztar_{k}(G_i)$ is connected for some $i$  then $\ztar_{k}(G_i)$ is connected for all $i$.

For  $S\subseteq V(G)$ with $u_i\in S$, let $S_i=S\setminus \{u_i\}$. 
Since $r\ge 3$,  any zero forcing set $S$ of $G$ must contain at least two vertices in $T$, say $u_i$ and $u_j$.   By Proposition \ref{p:ztwin},  a set  $S\subseteq V(G) $  that contains $u_i, u_j$  is a zero forcing set of $G$  if and only if  $S \setminus {u_i}$ is a zero forcing set of $G_i$ and  $S \setminus {u_j}$ is a zero forcing set of $G_j$.  

Let $S, S'$ be  two zero forcing sets  of $G$ of size $k+1$ or less.  Since $r\ge 3$  and each can omit at most one  vertex in $T$, their intersection must contain at least one  $u_i$. Then $S\setminus {u_i}, S'\setminus {u_i} $ are zero forcing sets for $G_i$. Since by assumption $\ztar_{k}(G_i)$ is connected, that means that there is a path between $S\setminus {u_i}$ and  $S'\setminus {u_i} $ in $\ztar_{k}(G_i)$ and hence a path between $S, S'$ in $\ztar_{k+1}(G)$.
\end{proof}

\begin{prop}  For $r\ge 2$, $\Z(H_r)=r+2$, $\ulzo(H_r)=r+3$, and $\zzo(H_r)=r+5$.
\end{prop}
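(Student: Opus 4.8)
The plan is to induct on $r$, taking the values established above for $H_2$ as the base case: $\Z(H_2)=4$, the minimal zero forcing sets of $H_2$ have orders $4$ and $6$ only, $\ulzo(H_2)=5$, and $\zzo(H_2)=7$. The structural fact driving the induction is that deleting any one twin from $H_r$ returns $H_{r-1}$; that is, $G_i=H_r-u_i\cong H_{r-1}$ for each $i$, and $\{u_1,\dots,u_r\}$ is a set of twins of $H_r$. Every parameter equality for $H_r$ will be reduced to the corresponding statement for $H_{r-1}$.

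First I would pin down the orders of the minimal zero forcing sets of $H_r$ for $r\ge 3$. By Observation \ref{obs:twin}, every zero forcing set of $H_r$ contains at least $r-1\ge 2$ of the twins. Given a minimal zero forcing set $S$ of $H_r$, choose a twin $u_i\in S$; Proposition \ref{p:ztwin} then guarantees that $S\setminus\{u_i\}$ is a \emph{minimal} zero forcing set of $G_i\cong H_{r-1}$, so $|S|$ is exactly one more than some order of a minimal zero forcing set of $H_{r-1}$. Conversely, appending $u_i$ to any minimal zero forcing set of $G_i$ produces a minimal zero forcing set of $H_r$ by the same proposition. Hence the set of orders of minimal zero forcing sets of $H_r$ equals that of $H_{r-1}$ shifted up by $1$. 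Starting from $\{4,6\}$ for $H_2$, this yields orders exactly $\{r+2,\,r+4\}$ for $H_r$; in particular $\Z(H_r)=r+2$, $\zbar(H_r)=r+4$, and there is \emph{no} minimal zero forcing set of order $r+3$.

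With the orders in hand, the connectedness parameters follow from the lemmas already proved. For the lower bounds, the existence of minimal zero forcing sets of orders $r+2$ and $r+4$ forces $\ztar_{r+2}(H_r)$ and $\ztar_{r+4}(H_r)$ to be disconnected, so $\ulzo(H_r)\ge r+3$ and $\zzo(H_r)\ge r+5$. For the upper bound on $\ulzo$, I would apply Lemma \ref{l:twin-conn} with $k=r+2$: by induction $\ulzo(H_{r-1})=r+2$, so $\ztar_{r+2}(H_{r-1})$ is connected, whence $\ztar_{r+3}(H_r)$ is connected and $\ulzo(H_r)=r+3$. For $\zzo$, I would apply Lemma \ref{l:twin-conn} with $k=r+4$: by induction $\zzo(H_{r-1})=r+4$, so $\ztar_{r+4}(H_{r-1})$ is connected, giving that $\ztar_{r+5}(H_r)$ is connected. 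Since $r+5>\zbar(H_r)$, Corollary \ref{c:upper-con-z} then upgrades this to connectedness of $\ztar_i(H_r)$ for all $i\ge r+5$ and yields $\zzo(H_r)=r+5$.

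The step I expect to require the most care is the orders computation, specifically the bookkeeping in Proposition \ref{p:ztwin}: I must verify that deleting any chosen twin from a minimal zero forcing set preserves minimality in both directions and that the correspondence shifts \emph{every} order by exactly one, rather than merely bounding the extremes $\Z$ and $\zbar$. The connectedness arguments are then essentially mechanical applications of Lemma \ref{l:twin-conn} and Corollary \ref{c:upper-con-z}, provided the inductive hypotheses $\ulzo(H_{r-1})=r+2$ and $\zzo(H_{r-1})=r+4$ are invoked to supply the requisite connected $\ztar_k(H_{r-1})$.
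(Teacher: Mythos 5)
Your proposal is correct and follows essentially the same route as the paper's (very terse) proof: induction from the base case $H_2$, using Proposition \ref{p:ztwin} to shift the orders of the minimal zero forcing sets by one at each step and Lemma \ref{l:twin-conn} together with Corollary \ref{c:upper-con-z} for the connectedness claims. In fact your write-up supplies details (the lower bounds from minimal sets of sizes $r+2$ and $r+4$, and the separate treatment of $\zzo$ via $k=r+4$) that the paper's proof leaves implicit.
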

\bpf The proof is by induction.  The base case is $H=H_2$, which has minimal zero forcing sets of sixes 4 and 6 and has $\ztar_5(H_2)$ is connected.  Apply Proposition \ref{p:ztwin} to show that $H_r$ has minimal zero forcing sets of sizes $r+2$ and $r+4$.  Apply Lemma \ref{l:twin-conn} to show $\ztar_{r+3}(H_r)$ is connected.
\epf

\section*{Acknowledgements}


 The authors thank the American Institute of Mathematics (AIM) and the National Science Foundation (NSF) for support of this research.  The research of B. Curtis was also partially supported by NSF grant 1839918.


\end{document}